\numberwithin{equation}{section}
\theoremstyle{plain}
\newtheorem{theorem}[equation]{Theorem}
\newtheorem{corollary}[equation]{Corollary}
\newtheorem{proposition}[equation]{Proposition}
\newtheorem{lemma}[equation]{Lemma}
\newtheorem{thmX}{Theorem}
\theoremstyle{definition}
\newtheorem{definition}[equation]{Definition}
\newtheorem*{definition*}{Definition}
\newtheorem{example}[equation]{Example}
\theoremstyle{remark}
\newtheorem{remark}[equation]{Remark}
\newtheorem*{remm}{Remark}
\newtheorem*{notations}{Notation}
\renewcommand{\geq}{\geqslant}
\newcommand{\A}{\mathbb A}
\newcommand{\SH}{\mathbf{SH}}
\newcommand{\HH}{\mathbf{H}}
\newcommand{\HHd}{\HH^\bullet}
\newcommand{\Hfr}{\mathbf{H}^\fr}
\newcommand{\Htr}{\mathbf{H}^\tr}
\newcommand{\Spec}{\operatorname{Spec}}
\newcommand{\Ker}{\operatorname{Ker}}
\newcommand{\Coker}{\operatorname{Coker}}
\newcommand{\colim}{\operatorname{colim}}
\newcommand{\Fib}{\operatorname{Fib}}
\newcommand{\Fr}{\mathrm{Fr}}
\newcommand{\fr}{\mathrm{fr}}
\newcommand{\tr}{\mathrm{tr}}
\newcommand{\trd}{\mathbf{tr}}
\newcommand{\nfr}{\mathrm{nfr}}
\newcommand{\pfr}{\mathrm{pfr}}
\newcommand{\Sch}{\mathrm{Sch}}
\newcommand{\Schfns}{\mathrm{Sch}^\mathrm{fns}}
\newcommand{\Sm}{\mathrm{Sm}}
\newcommand{\Aff}{\mathrm{Aff}}
\newcommand{\Afffns}{\mathrm{Aff}^\mathrm{fn}}
\newcommand{\SmAff}{\mathrm{SmAff}}
\newcommand{\EssSmAff}{\mathrm{EssSmAff}}
\newcommand{\AffSm}{\mathrm{SmAff}}
\newcommand{\Gm}{\mathbb G_m}
\newcommand{\EssSm}{\mathrm{EssSm}}
\newcommand{\Pre}{\SPre}
\newcommand{\Corr}{\mathrm{Corr}}
\newcommand{\Corrtr}{\mathrm{Corr}}
\newcommand{\CorrAtr}{\mathrm{Corr}^{\A}}
\newcommand{\nis}{\mathrm{nis}}
\newcommand{\zar}{\mathrm{zar}}
\newcommand{\tf}{\mathrm{tf}}
\newcommand{\zf}{\mathrm{zf}}
\newcommand{\stau}{{\mathrm{s}\tau}}
\newcommand{\wtau}{{\mathrm{w}\tau}}
\newcommand{\SPre}{\mathrm{PSh}} %_{S^1}}
\newcommand{\SPrerad}{\SPre_\Sigma}
\newcommand{\SPrefr}{\SPre^\fr}
\newcommand{\SPretr}{\SPre^\tr}
\newcommand{\Shtr}{\Shv^\tr}
\newcommand{\SPretrrad}{\SPre^{\mathrm{tr}}_\Sigma}
\newcommand{\SPred}{\SPre^\bullet}
\newcommand{\SPretrd}{\SPre^\trd}
\newcommand{\SPreradd}{\SPrerad^\bullet}
\newcommand{\SPreradtr}{\SPrerad^\tr}
\newcommand{\SPreradAtr}{\SPrerad^{\A\tr}}
\newcommand{\SPreradtrd}{\SPrerad^\trd}
\newcommand{\SPreradtrA}{\HH^\tr_\Sigma}%\Pre^\tr_{\A^1}}
\newcommand{\Spc}{\mathrm{Spc}}
\newcommand{\gp}{\mathrm{gp}}
\newcommand{\Htrgp}{\mathbf{H}^{\tr,\gp}}
\newcommand{\calS}{\mathcal S}
\newcommand{\sSet}{\mathrm{sSet}}
\newcommand{\triv}{\mathrm{triv}}
\newcommand{\Map}{\mathrm{Map}}
\newcommand{\cofib}{\operatorname{cofib}}
\newcommand{\fib}{\operatorname{fib}}
\renewcommand{\thesubsection}{\arabic{subsection}}
\newcommand{\Smat}{\mathrm{Sm}^\mathrm{cci}}
\newcommand{\Pretr}{\Pre^\tr}
\newcommand{\HHtr}{\HH^\tr}
\newcommand{\HHtrd}{\HH^\trd}
\newcommand{\Shv}{\mathrm{Shv}}
\newcommand{\wX}{\widetilde X}
\newcommand{\nuf}{\nu\mathrm{f}}
\newcommand{\bbS}{\mathbf S}
\newcommand{\lSz}{{S_{(z)}}}
\begin{document}

\title{Zariski-local framed $\mathbb{A}^1$-homotopy theory} %framed
% \thanks{The research is supported by the Russian Since Foundation grant 20-41-04401, except \Cref{th:tautfLoc:istartjsharp}. The second author is 
% supported by SFB 1085 Higher Invariants}

\author[A. Druzhinin]{Andrei Druzhinin}
\address{Andrei Druzhinin, 
St. Petersburg Department of Steklov Mathematical Institute of Russian Academy of Sciences,
Fontanka, 27, 191023 Saint Petersburg, Russia;
Chebyshev Laboratory, St. Petersburg State University, 
14th Line V.O., 29, Saint Petersburg 199178 Russia.
}
\email{}{andrei.druzh@gmail.com, adruzhinin@pdmi.ras.ru, a.druzhinin@spbu.ru}
% \email{\href{mailto:andrei.druzh@gmail.com}}{andrei.druzh@gmail.com}

\author[V. Sosnilo]{Vladimir Sosnilo}
\address{
Vladimir Sosnilo, M309, 
Universit{\"a}t Regensburg, Universit{\"a}tsstra{\ss}e 31, 93053 Regensburg, Germany
}
\email{}{vsosnilo@gmail.com}
% \email{\href{mailto:vsosnilo@gmail.com}}{vsosnilo@gmail.com}

\subjclass{14F42}
\keywords{stable motivic homotopy category, framed correspondences, Zariski topology, localisation theorem}
\maketitle

\begin{abstract}
% , when $S$ is a separated noetherian scheme of finite dimension
For any (not necessarily perfect) field $k$ we obtain equivalences of $\infty$-categories
\[\mathbf{H}^{\mathrm{fr},\mathrm{gp}}(k)\simeq \mathbf{H}^{\mathrm{fr},\mathrm{gp}}_{\mathrm{zf}}(k)
\text{  and  }
\mathbf{DM}(k)\simeq\mathbf{DM}_{\mathrm{zar}}(k).\]
We also construct an equivalence of $\infty$-categories 
\[
\mathbf{H}^{\mathrm{fr},\mathrm{gp}}(S) \simeq \mathbf{H}^{\mathrm{fr},\mathrm{gp}}_{\mathrm{zf}}(S)
\]
of group-like framed motivic spaces %spectra
over a separated noetherian scheme $S$ of finite Krull dimension
with respect to the Nisnevich topology at one side and the Zariski fibre topology $\mathrm{zf}$
generated by the Zariski one and the trivial fibre topology (introduced by Druzhinin, Kolderup and 
{\O}stv{\ae}r) on the other side. Over a field, the Zariski fibre topology equals the Zariski topology and the 
result follows from the previous one. To prove it in the case of a general base scheme, we prove a 
localisation theorem for $\mathbf{H}^{\mathrm{fr},\mathrm{gp}}_{\mathrm{zf}}(-)$
%Namely, we use 
%the localisation theorem for $\mathbf{H}^{\mathrm{fr},\mathrm{gp}}(S)$ was proved by 
%Hoyois, 
%and 
employing the ideas from the proof of the {\it affine localisation theorem} for the trivial fibre 
topology by the first author, Kolderup and {\O}stv{\ae}r. % to obtain the the localisation theorem for 
%$\mathbf{H}^{\mathrm{fr},\mathrm{gp}}_{\mathrm{zf}}(S)$. 
%{\O}stw{\ae}r
\end{abstract}

\section*{Introduction}

It is known that 
the Nisnevich topology on the category of smooth schemes is 
%a 
very natural in the context of motivic homotopy theory. % for the topology on the category of schemes 
In particular, the constructions of 
% of motivic homotopy categories, % such as 
Voevodsky motives \cite{Voe-motives,Voe-cancel,MVW,Cisinski-Deglise},
unstable and stable Morel-Voevodsky's motivic homotopy categories \cite{Morel-Voevodsky,Jardine-spt,morel-trieste,mot-functors},
and framed motivic categories \cite{ehksy,BigFrmotives,FramedGamma} are all based on 
Nisnevich sheaves of spaces. 
In this note,  
we show that the Zariski topology over a field and a modification of it %denoted $\zf$
over finite-dimensional separated noetherian schemes called \emph{the Zariski fibre topology}
lead to the same $\infty$-categories of 
Voevodsky motives $\mathbf{DM}(S)$ and 
of the framed motivic spectra $\SH^\fr(S)$ \cite{ehksy}. % via the usual construction 
Consequently, this allows us to improve the reconstruction theorem from \cite{Hoyois-framed-loc} that states $\SH(S)\simeq \SH^\fr(S)$, 
and write $\SH(S)\simeq \SH^\fr_\zf(S)$.

Regarding the 
categories %$\mathbf{DM}(k)$, $\mathbf{DM}_\zar(k)$ 
of Voevodsky motives over a field,
% defined with respect to the Nisnevich and the Zariski topologies
we prove 
that for any field $k$ the obvious fully faithful embedding is an equivalence
\begin{equation}\label{eq:DMniskDMzark}\mathbf{DM}(k)\simeq \mathbf{DM}_\zar(k).\end{equation}
% \begin{remark}
% \begin{remark}
As shown in \cite[Theorem 5.7]{VoevA1invCorprethories}
the strict homotopy invariance \cite[Theorem 5.6]{VoevA1invCorprethories}
and the injectivity \cite[Cor. 4.18, 4.19]{VoevA1invCorprethories} theorems 
imply 
the isomorphism \[H^n_\nis(X,F_\nis)\simeq H^n_\zar(X,F_\zar)\] 
for an $\A^1$-invariant presheaf of abelian groups with transfers. 
While the latter isomorphism implies the categorical equivalence \eqref{eq:DMniskDMzark},
and moreover, conversely, the categorical equivalence \eqref{eq:DMniskDMzark} together with the strict 
homotopy invariance theorem imply the isomorphism, 
the known proof of \cite[Theorem 5.6]{VoevA1invCorprethories} uses perfectness of $k$. 
%\end{remark}
% It is known that the equivalence \eqref{eq:DMniskDMzark} is a consequence of the injectivity on essentially smooth local schemes theorem \cite[Cor. 4.18, 4.19]{VoevA1invCorprethories}, and the strict homotopy invariance theorem provided by \cite[Theorem 5.6]{VoevA1invCorprethories} for perfect $k$.
%
%In these notes, the equivalence \eqref{eq:DMniskDMzark} is proved for any $k$ without use of the strict homotopy invariance theorem.
Our proof of \eqref{eq:DMniskDMzark} does not use the assumption; in fact our proof is self-contained apart 
from the only external ingredient -- the \'etale excision theorem, 
which is provided by \cite{VoevA1invCorprethories} for all fields $k$.

\subsubsection{} 
We expect that the direct generalization of the equivalence \eqref{eq:DMniskDMzark} to positive-dimensional 
base schemes does not hold, 
because $\mathbf{DM}_\zar(S)$ most likely does not satisfy the {\it Localisation Theorem}, 
while $\mathbf{DM}(S)$ does (see \cite{Cisinski-Deglise}). 
The Zariski fibre topology $\zf$ on $\Sm_S$ is defined to fix this problem of the Zariski topology. 
It coincides with the Zariski topology over fields and satisfies the Localisation Theorem over general bases. 
Concretely, this topology is generated by the Zariski topology and the trivial fibre topology introduced in 
\cite[Definition 3.1]{DKO:SHISpecZ} by the first author, Kolderup and {\O}stv{\ae}r. 
The latter is the topology generated by the Nisnevich covers coming from the Nisnevich squares of the form
\[%\label{eq:XprimeXtimesS-ZtoS}
\xymatrix{X^\prime\times_S(S-Z)\ar[d] \ar[r] & X^\prime\ar[d]\\
X\times_S(S-Z)\ar[r]& X
}
\]
where $Z$ is a closed subscheme of the base.

\begin{remm}%\label{rem:zf}
Zariski fibre topology is the strongest subtopology of the Nisnevich topology on $\Sch_S$ that equals the Zariski topology over the residue fields.
It is expected that the Zariski fibre topology is the weakest topology that contains Zariski topology and satisfies the Localisation Theorem for categories $\mathbf{DM}(S)$ or $\SH(S)$.
%, or \cite[Theorem 2.21]{Morel-Voevodsky}
\end{remm}

\subsubsection{}
Recall that Voevodsky motives $\mathbf{DM}_\tau(S)$, motivic spectra $\SH_\tau(S)$ and framed motivic spectra $\SH^{\mathrm{fr}}_\tau(S)$ can all be defined via a general 
construction of
taking the $\infty$-category of $\mathbb{P}^1$-spectra in $\HHtrd_\tau(S)$ -- the 
$\infty$-category of $\tau$-sheaves on a given {\it $\infty$-category of 
correspondences} $\mathrm{Corr}_S$ (see Definition~\ref{def:ShvtauHHtauA1}). 
In the case of $\mathbf{DM}(S)$ one takes $\mathrm{Corr}_S$ to be the category of finite correspondences \cite{Voe-motives} and in 
the case of motivic spectra one takes $\mathrm{Corr}_S = \mathrm{Sm}_S$. 
In the case of framed motivic spectra one takes the $\infty$-category of framed correspondences
\footnote{Framed correspondences in their original form were introduced in the unpublished notes by 
Voevodsky \cite{VoevNotesFrCor}, and the instrument was deeply studied and developed in the Garkusha-Panin theory of framed motives \cite{GP14,GP-HIVth,CancellationFrAGP,ConeTheGNP,Nesh-FrKMW,framed-MW,BigFrmotives,FramedGamma,DruzhPanin-SurjEtEx,DrKyl,DRUZHININ2022106834,SmModelSpectrumTP,Nesh:nfr_GN}} 
\cite{ehksy,framed-notes,epiga:7494}. 

We employ an axiomatic treatment of the localisation theorem, defining it as the property of a topology in the 
context of a given family of categories of correspondences (see Definition~\ref{def:correspondencesCorrS} and 
Definition~\ref{def:LocLocAff}). This allows to apply our result both in the setting of Voevodsky motives and 
motivic spectra. 

\begin{example}
The localisation property for the Nisnevich topology in the context of the category of framed correspondences 
is proved in \cite{Hoyois-framed-loc}, furthermore, it follows that it holds when 
$\mathrm{Corr}_S$ is taken to be $\mathrm{Sm}_S$ from the works of Ayoub \cite{Morel-Voevodsky,AyoubI}. In the 
context of finite correspondences the property follows from Cisinski-D\'{e}glise's work 
\cite{Cisinski-Deglise}. In \cite{DKO:SHISpecZ} the localisation property for the trivial fibre topology was proved, 
both in the context of framed correspondences and when $\mathrm{Corr}_S = \mathrm{Sm}_S$. 
\end{example}
In the present paper, based on the ideas of \cite{DKO:SHISpecZ},
we show that the localisation theorem also holds for the Zariski fibre topology. 
More generally, we prove the 
following:

\begin{thmX}[{Theorem~\ref{th:tautfLoc}}, and \ref{th:tautfLoc:istartjsharp}]%\label{th:tautfLoc}
Let $i\colon Z\to S$ be a closed immersion of affine schemes.
Let $\tau$ be a topology on $\Sch_S$ such that $\tau\supset \tf$ and satisfies the additional technical 
assumption: $\wtau=\stau$ on $\SmAff_{S,Z}$ (see \Cref{def:wtaustau}).

Denote by
$\Htr_{\Sigma,\tau}(\SmAff_{S})$ one of $\HH_{\Sigma,\tau}(\SmAff_{S})$, $\Hfr_{\Sigma,\tau}(\SmAff_{S})$.
Denote by 
$\HHtrd_{\Sigma,\tau}(\SmAff_{S})$ one of 
$\HHd_{\Sigma,\tau}(\SmAff_{S})$, $\HH^\mathrm{fr}_{\Sigma,\tau}(\SmAff_{S})$.
Then we have adjunctions $i^!\vdash i_*$, $j_*\vdash j^*$, $i^*\dashv i_*$, $j_\#\dashv j^*$
\[
\begin{tikzcd}
\HHtrd_{\Sigma,\tau}(\SmAff_{Z})\arrow[r, shift left, "i_*"] & \HHtrd_{\Sigma,\tau}(\SmAff_{S})\arrow[r, shift left, "j^*"]\arrow[l, shift left, "i^!"] & \HHtrd_{\Sigma,\tau}(\SmAff_{S-Z}),\arrow[l, shift left, "j_*"]
\end{tikzcd}
\]
\[
\begin{tikzcd}
\Htr_{\Sigma,\tau}(\SmAff_{Z})\arrow[r, shift right, swap, "i_*"] & \Htr_{\Sigma,\tau}(\SmAff_{S})\arrow[r, shift right, swap, "j^*"]\arrow[l, shift right, swap, "i^*"] & \Htr_{\Sigma,\tau}(\SmAff_{S-Z})\arrow[l, shift right, swap, "j_\sharp"]
\end{tikzcd}
\]
% by $i_*\dashv i^!$, $j^*\dashv j_*$, and the one for $i^*\dashv i^*$, $j_\#\dashv j^*$.
inducing a pullback square and, respectively, a pushout square
\[\xymatrix{
i_* i^! F\ar[r]\ar[d] & F\ar[d]\\
{*}\ar[r] & j_* j^* F,
}
\quad
\quad\quad
\xymatrix{
j_\sharp j^* G \ar[r]\ar[d] & G\ar[d]\\
{*}\ar[r] & i_* i^* G
}\]
for any 
$F\in \HHtrd_{\Sigma,\tau}(\SmAff_{S})$ and for any 
$G\in \Htr_{\Sigma,\tau}(\SmAff_{S})$.
%In other words, $\tau$ satisifes the localisation property in the sense of \Cref{def:LocLocAff}.
In particular, the claim holds for $\tau$ being the Zariski fibire topology $\zf=\zar\cup\tf$, as well as for the Nisnevich topology and the trivial fibre topology $\tf$.
\end{thmX}
% \begin{thmX}[{Theorem~\ref{th:tautfLoc}}]%\label{th:tautfLoc}
% Let $i\colon Z\to S$ be a closed immersion of affine schemes.
% Let $\tau$ be a topology on $\Sch_S$ such that $\tau\supset \tf$ and satisfying the additional technical 
% assumption: $\wtau=\stau$ on $\SmAff_{S,Z}$ (see \Cref{def:wtaustau}).

% Consider the pair of adjunctions
% \[
% \begin{array}{lclcl}
% \Hfr_{\tau}(\SmAff_{S})&\rightleftarrows& \Hfr_{\tau}(\SmAff_{S})&\rightleftarrows&\Hfr_{\tau}(\SmAff_{Z})\\
% \HH_{\tau}(\SmAff_{S})&\rightleftarrows& \HH_{\tau}(\SmAff_{S})&\rightleftarrows&\HH_{\tau}(\SmAff_{Z})
% \end{array}
% \]
% given by $i_*\dashv i^!$, $j^*\dashv j_*$.
% Then for any $F\in \Hfr_{\tau}(\SmAff_{S})$, or $F\in \HH_{\tau}(\SmAff_{S})$, there is a pullback square
% \[\xymatrix{
% i_* i^! F\ar[r]\ar[d] & F\ar[d]\\
% {*}\ar[r] & j_* j^* F.
% }\]
% %In other words, $\tau$ satisifes the localisation property in the sense of \Cref{def:LocLocAff}.
% In particular, the claim holds for $\tau$ being the Zariski fibire topology $\zf=\zar\cup\tf$, as well as for the Nisnevich topology and the trivial fibre topology $\tf$.
% \end{thmX}
%The argument is written in such a general form that provides the result for categories $\HH(\SmAff_S)$ as well.

As discussed above, using the \'etale excision theorem from \cite{GP-HIVth,DruzhPanin-SurjEtEx,DrKyl},
we prove the equivalence
\begin{equation*}
\mathbf{H}^{\fr,\gp}_{\nis}(k)\simeq \mathbf{H}^{\fr,\gp}_{\zar}(k).
\end{equation*}
%We combine the mentioned argument for $S=\Spec k$ for a field $k$based on
Combining this with the above localisation result, we obtain the following 
generalization to the relative setting. %Localisation Theorem presented above.

\begin{thmX}\label{th:SHtrzarSHtrnis}
For any noetherian separated scheme $S$ of finite Krull dimension,
there is a canonical equivalence of categories
\begin{equation*}
\mathbf{H}^{\fr,\gp}_{\nis}(S)\simeq \mathbf{H}^{\fr,\gp}_{\zf}(S).
\end{equation*}
\end{thmX}
\begin{remark}\label{ex:CorKCorGWCorMWwidetildeCorCorrfr(k)}
It is likely true that the above theorem 
holds as well in the context of other families of $\infty$-categories of correspondences, although 
we do not discuss it here. Some interesting examples to consider include
% for all preadditive $\infty$-category of correspondences equipped with the functor $\Fr_+(k)\to \Corr_k$, 
% from the framed category of framed correspondences $\Fr_+(k)$ defined in \cite{GP14}
%such as 
$\mathrm{K}$-correspondences, $\mathrm{GW}$-correspondences \cite{Walker,K-motives,DruDMGWeff}, 
%(over fields), 
Milnor-Witt correspondences \cite{Chow-W-correspondences,MW-cplx,MW-ring-spt,MWmotives} 
%appropriately defined over non-perfect fields
and finite $A$-correspondences in the sense of \cite{ccorrs, five-transfers}. 

\end{remark}

\subsection*{Structure of the text} % and proof

%\Cref{sect:Corrandtau_S}
% we introduce a formalism of $\infty$-categories of correspondences and respective motivic categories over base schemes $S\in \Sch$,
% and prove some general properties.
%
We start in \Cref{subsect:SchCorr_S} with 
a formalism of families of $\infty$-categories of correspondences $\Corr_S$ 
and respective motivic $\infty$-categories 
\[\HH_{\Sigma,\tau}^\tr(S),\quad \SH_{\Sigma,\tau}^{S^1,\tr}(S),\quad \SH^{\tr}_{\Sigma,\tau}(S)\]
% \[\HH_\tau^\Corr(S),\quad \SH_\tau^{S^1,\Corr}(S),\quad \SH^{\Corr}_\tau(S)\]
over base schemes $S\in \Sch$. 
In the rest of \Cref{sect:Corrandtau_S} we discuss various properties of families of
$\infty$-categories of correspondences and their consequences. 
In \Cref{sect:zf}, we discuss general fibre topologies, that include the Zariski fibre topology $\zf$, Nisnevich topology and the trivial fibre topologies as examples. 
In \Cref{section:LocalisationTheoremtf}, we prove the localisation theorem for $\HH_{\Sigma,\tau}^\tr(S)$ for fibre topologies. %, covering the examples provided by $\nis$, $\tf$, and $\zf$. %=\zar\cup\tf
In \Cref{sect:pointexcisive(pre)sheaf} we discuss sheaves that satisfy the property of being {\it excisive} 
with respect to varying topologies. In \Cref{lm:NexZpoints} we consider a pair of topologies $Z\subset N$, 
such that $Z$ have enough set of points, and $N$ is completely decomposable. 
We prove that a $Z$-sheaf is an $N$-sheaf whenever it is excisive with respect to $N$-squares on $Z$-points.
In \Cref{subsect:EtExFrCorr} we discuss the example given by the \'etale excision for $\mathbb{A}^1$-invariant 
framed presheaves. 
Finally, in \Cref{subsect:EtExLocA1niszar} we prove \Cref{th:SHtrzarSHtrnis} over fields and in \Cref{subsect:ResultingSummary} we deduce the result over base schemes $S\in \Schfns$. 

\begin{notations}\label{den:introductionnotation}
Throughout the paper we make use of the following conventions and notation.

%\begin{itemize}
\begin{enumerate}
\item
$\Sch$ is the category of %qcqs 
schemes, and 
$\Aff$ is the category of 
affine schemes.
\item
$\Schfns$, and $\Afffns$ 
are the categories of 
noetherian separated schemes
of finite Krull dimension, and 
affine noetherian schemes
of finite Krull dimension.
% $\Schns$ and $\Affns$ are the subcategories of noetherian separated schemes, and affine noetherian schemes.
% $\Schfns$ and $\Afffns$ are the subcategories of schemes of finite Krull dimension.

% \item
% $\Sch_S$ and $\Aff_S$ are the categories of schemes in $\Sch$, and $\Aff$
% over a given noetherian separated scheme $S\in \Sch$, or $S\in\Aff$.
% Writing symbols $\Sch_S$, or $\Aff_S$, we always suppose that $S$ is noetherian separated.
\item
$\Sch_S$ and $\Aff_S$ are the categories of %qcqs 
schemes % in $\Sch$, and $\Aff$
over a given scheme $S$. %$S\in \Sch$, or $S\in\Aff$.
% % Writing symbols $\Sch_S$, or $\Aff_S$, we always suppose that $S$ is noetherian separated.

% \item
% $\Sch_S$ and $\Aff_S$ are the categories of schemes in $\Sch$, and $\Aff$
% over a given noetherian separated scheme $S\in \Sch$, or $S\in\Aff$.
% Writing symbols $\Sch_S$, or $\Aff_S$, we always suppose that $S$ is noetherian separated.
% \todo{commented alternative text is here}
% \item
% $\Sch$ is the category noetherian separated schemes and $\Aff$ is the category of affine schemes.
% $\Sch_S$ and $\Aff_S$ are the categories of arrows over $S$ in $\Sch$, and $\Aff$ for a given $S\in \Sch$, or $S\in\Aff$.
% Speaking about limits of diagrams in $\Sch$, or $\Sch_S$, %of schemes or $S$-schemes
% we always mean the diagrams that have limits in the category $\Sch$, or $\Sch_S$.
\item
$\Sm_S$ and $\SmAff_S$ are the full subcategories of $\Sch_S$ and $\Aff_S$ spanned by smooth $S$-schemes;
% For $S\in \Sch$ we denote by 
$\Smat_{S}$ is the subcategory of $\SmAff_{S}$ 
spanned by schemes whose tangent bundle is stably trivial.
\item 
For a scheme $S\in\Sch$, 
we identify a point $z\in S$ with 
% the $0$-dimensional scheme given by 
the spectrum of its residue field. 
We write $\lSz=\Spec\mathcal O_{S,z}$, and $S^h_z=\Spec\mathcal O_{S,z}^h$. 
% \todo{commented alternative text is here}
% \item
% $\EssSm_S$ is the subcategory in $\Sch_S$ 
% of essentially smooth schemes over $S$,
% namely, 
% filtered limits in the category of qcqs schemes %$\Sch_S$ 
% of diagrams in $\Sm_S$ 
% with \'etale affine transition maps
% that belong to $\Sch_S$.
% For a functor $F$ on $\Sm_S$, denote by the same symbol 
% the continuous functor on $\EssSm_S$ 
% given by $F(\varprojlim_\alpha X_\alpha)\cong \varinjlim_\alpha F(X_\alpha)$.
\item
For any $X\in \Sch_S$, $z\in S$, and $S^\prime\to S$,
we write $X_z=X\times_S z$, and $X_{S^\prime}=X\times_S S^\prime$.
\item
$\EssSm_S$ is the category of essentially smooth schemes over $S$,
namely, filtered limits in $\Sch_S$ of diagrams in $\Sm_S$ 
with \'etale affine transition maps.
For a functor $F$ on $\Sm_S$, denote by the same symbol 
the continuous functor on $\EssSm_S$ 
given by $F(\varprojlim_\alpha X_\alpha)\cong \varinjlim_\alpha F(X_\alpha)$.
\item
For any presheaf on the small Zariski site over a scheme $X$, 
and a closed immersion $i\colon Z \hookrightarrow X$, 
$F(X_{(Z)})$ equals the global sections of the presheaf $i^{-1}F$ on $Z$. 
%TODO add: In other words it's the colimit of $F(U)$ over all open subschemes of $X$ containing $Z$. 
% If $W$ is 
% an essentially smooth scheme obtained as 
% a localisation of a smooth scheme $X$ at 
% a closed subset $Z$, 
% which are clear from the context, 
% we write $F(W)$ for $F(X_Z)$.

\item 
Staring from \Cref{def:correspondencesCorrS},
we write $\mathcal S_{(-)}$ for $\mathcal S_\bbS$, when $\bbS\subset\Sch$ is clear from the context.

\item
We use the standard language of $\infty$-categories following
\cite{Lurie}. 
We denote by $\mathrm{Cat}^{\operatorname{padd}}_{\infty}$ the $\infty$-categories of small preadditive $\infty$-categories and $\mathrm{Cat}^{\operatorname{add}}_{\infty}$ is its subcategory consisting of additive $\infty$-categories. 
We denote by $(-)^\gp$ the left adjoint to the obvious embedding functor
$\mathrm{Cat}^{\operatorname{add}}_{\infty}\to \mathrm{Cat}^{\operatorname{padd}}_{\infty}$.
% % takes a preadditive $\infty$-category $C$ 
% % to the additive $\infty$-category $C^\gp$ whose mapping spaces are group completions of the corresponding $\mathbb{E}_{\infty}$-monoids that are mapping spaces of $C$, see \Cref{def:Prepaddadd}.

%$\mathrm{Cat}^{\operatorname{padd}}_{\infty}$, $\mathrm{Cat}^{\operatorname{add}}_{\infty}$ are the $\infty$-categories of preadditive and additive  $\infty$-categories.
% For $\mathcal C\in\mathrm{Cat}^{\operatorname{padd}}_{\infty}$, 
% $\mathcal C^\gp\in \mathrm{Cat}^{\operatorname{add}}_{\infty}$ is
% the the associated additive $\infty$-category.

\item
$\SPre(\mathcal S)$ and $\SPred(\mathcal S)$ 
are the $\infty$-categories of presheaves of spaces or pointed spaces 
on %the $\infty$-category 
$\mathcal S$. 
$\SPrerad(\mathcal S)$ and $\SPreradd(\mathcal S)$ are 
the subcategories of radditive presheaves. %, whenever $\mathcal S$ has coproducts

\item
% Let $S$ be a scheme.
Given an $\infty$-category denoted by %$\Corr_S$ or 
$\Corr(\mathcal S)$, %the base scheme,
% and equipped with a functor
% $\Sm_S\to \Corr_S$, 
% we use the following notation:
we write 
\[
\SPretr(\mathcal S)=\SPre(\Corr(\mathcal S)),\quad 
\SPretrd(\mathcal S)=\SPred(\Corr(\mathcal S))
\]
when there is no confusion.
%where the latter one stands for pointed presheaves.
%
% $\SPretr(\mathcal S_*)=\SPre(\Corr(\mathcal S_*))$, 
% and
% $\SPretrd(\mathcal S_*)=\SPred(\Corr(\mathcal S_*))$.
%
%
%
\item
We denote by $h^\tr_*(X)\in\SPre^\tr(\mathcal S_*)$ 
the presheaf represented by $X\in \Corr(\calS_*)$.
We write $h^\tr(X)$ for $h^\tr_*(X)$, when the base scheme is clear from the context.
\item $h^\trd_*(X)=h^\tr_*(X)_+\in\SPretrd(\mathcal S_*)$.
% We 
% denote by $h^\tr_S(X)\in\SPre^\tr(\mathcal S_*)$ the presheaf represented by $X\in \Corr(\calS_*)$,
% and $h^\trd_S(X)=h^\tr_S(X)_+\in\SPre^\trd(\mathcal S_*)$.
%and the associated sheaf $h^\tr_S(X)\in\Shv_\tau^\tr(\mathcal S_*)$.
% We write $h^\tr(X)$ for $h^\tr_*(X)$, when the base scheme $S$ is defined by the context.
%Denote by $h^\tr(X)$ the presheaf represented by $X\in \Corr_S$,
%and denote by $h^\tr_\tau(X)$ the sheafification with respect to a topology $\tau$.\todo{added}

\item
In \Cref{subsect:SZA1Z}, we use notation 
$\Pre^{\A\tr}(\mathcal S_*)=\SPre(\CorrAtr(\mathcal S_*))$ in sense of \Cref{def:CorrA}.
% $\Pre^{\A\trd}(\mathcal S_*)=\SPred(\CorrAtr(\mathcal S_*))$ in sense of \Cref{def:CorrA}.
%$\Pre^{\A\tr}(\mathcal S_*)=\Pre(\Corr^\A(\mathcal S_*))$.

% \item
% Let $S$ be a scheme.
% Given a category denoted by $\Corr_S$ or $\Corr(S)$ and equipped with a functor
% $\Sm_S\to \Corr_S$, 
% % that induces isomorphism 
% % $\pi_0(\Sm_S)\cong \pi_0(\Corr_S)$,
% we use the following notation:
% $\SPretr(S)=\SPre(\Corr_S)$,
% $\SPretrrad(S)$ is the subcategory of radditive presheaves in $\SPretr(S)$.
% % and for a topology $\tau$ on $\Sm_S$.
% % $\Sh^\tr_\tau(S)$ is the subcategory of $\tau$-local objects.
% % \item
% % Given a family of categories $\Corr(S)$,
% % $\SPretr(S)$ is the $\infty$-category of additive presheaves of spaces on $\Corr_S$,
% % $\SPretr(\SmAff_S)$ is the $\infty$-category of additive presheaves of spaces on $\mathrm{Corr}(\SmAff_S)$.
% Denote by $h^\tr(X)$ the presheaf represented by $X\in \Sm_S$.
% % the subcategory of $\mathrm{Corr}^{\mathrm{fr}}_S$ spanned by affine schemes
% % \item 

\item 
% a category denoted by $\Corr_S$ or $\Corr(S)$
% equipped with 
% Given a subcategory $\mathcal S$ in $\Sch_S$ over the base scheme $S$ closed with respect to the endofunctor $-\times\A^1$, and an essentially surjective functor $\Sm_S\to \Corr_S$,
% the subcategory of $\SPre^\tr_\Sigma(S)$ spanned by 
% $\A^1$-invariant objects is denoted by $\HH^\tr(S)$; 
% the subcategory of $\nu$-sheaves in $\SPre^\tr_\Sigma(S)$, for a topology $\nu$ on $\Sm_S$,
% is denoted by $\Shv^\tr_\nu(S)$;
% $\HH^\tr_\nu(S)=\HH^\tr(S)\cap\Shv^\tr_\nu(S)$.
% The latter two subcategories and their intersection are reflective,
% we denote the corresponding localisation functors by 
% $L_{\A^1}$, $L_{\nu}$, and $L_{\A^1,\nu}$ respectively.

% Given a subcategory $\mathcal S$ in $\Sch_S$ over the base scheme $S$ closed with respect to the endofunctor $-\times\A^1$, and an essentially surjective functor $\Sm_S\to \Corr_S$,
% the subcategory of $\SPre^\tr(S)$ spanned by 
% $\A^1$-invariant objects is denoted by $\HH^\tr_\triv(S)$.

% The subcategory of $\SPre^\tr(-)$ or $\SPre^\trd(-)$
% spanned by 
% $\A^1$-invariant objects is 
% denoted by $\HH^\tr_\triv(-)$ or $\HH^\trd_\triv(-)$.

$\HH^\tr_\triv(-)$ or $\HH^\trd_\triv(-)$
denotes 
the subcategory of 
$\SPre^\tr(-)$ or $\SPre^\trd(-)$
spanned by 
$\A^1$-invariant objects.

\item
Throughout the text we consider subcanonical topologies $\tau$ on categories $\mathcal S_*$; 
$\Shv^\tr_\tau(\mathcal S_*)=\Shv_\tau(\Corr(\mathcal S_*))$,
that is the subcategory in $\SPre^\tr(\mathcal S_*)$ 
spanned by 
the objects that go to 
$\tau$-sheaves in $\SPre(\mathcal S_*)$,
and similarly for $\Shv^\trd_\tau(\mathcal S_*)$.
%Similarly we define the subcategory $\Shv^\trd_\tau(\mathcal S_*)$ in $\SPre^\trd(\mathcal S_*)$.

\item
%Denote 
$\HH^\tr_\tau(-)=\HH^\tr_\triv(-)\cap\Shv^\tr_\tau(-)$,
and similarly for $\HH^\trd_\tau(-)$.
%The latter two subcategories and their intersection are reflective,
%We define the $\infty$-category $\HH^\trd_\tau(-)$ in a similar way.
\item 
We write 
$\SH^{S^1,\tr}_\tau(-) = \SH^{S^1}_\tau(\Corr(-))$,
$\SH^{\tr}_\tau(-) = \SH_\tau(\Corr(-))$.

\item 
We denote the corresponding localisation functors 
$L_{\A^1}\colon \SPre^\tr(-)\to \HH^\tr_\triv(-)$, 
$L_{\tau}\colon \SPre^\tr(-)\to \Shv^\tr_\tau(-)$, and 
$L_{\A^1,\tau}\colon \SPre^\tr(-)\to \HHtr_\tau(-)$,
and similarly for pointed categories.

\item 
We write $\SPre(S)$ for $\SPre(\Sm_S)$, where $S\in\Sch$,
and write $\SPre(R)$ for $\SPre(\operatorname{Spec} R)$ for a ring $R$.
We use similar notation for 
$\Shtr_\nu(-)$, $\HHtr(-)$, $\HHtrd(-)$, $\SH^{S^1,\tr}_\tau(-)$, $\SH^{\tr}_\tau(-)$.
% $\Shtr_\nu(S)$, $\HHtr(S)$, $\HHtrd(S)$, $\SH^{S^1,\tr}_\tau(S)$, $\SH^{\tr}_\tau(S)$.

\item
We denote by $\tau\cup\nu$ the topology generated by topologies $\tau$ and $\nu$.

\item
%is equipped over $\mathbb{E}_{\infty}$-monoids, 
We write
$\SPre^{\tr,\gp}(\mathcal S) = \SPre(\Corr_*^\gp(\mathcal S))$,
when 
the $\infty$-category $\Corr(\mathcal S_*)$ as above is preadditive.
Note that any $F \in \SPretr_\Sigma(\mathcal S)$ admits a canonical structure of an $\mathbb{E}_{\infty}$-monoid;
so 
$\SPretrd_\Sigma(S)\simeq \SPretr_\Sigma(S)$,
and
$\SPre^{\tr,\gp}_\Sigma(\mathcal S)$
is equivalent to the subcategory of group-like objects in
$\SPre^\tr_\Sigma(\mathcal S)$
by \cite[Lemma 1.6]{UnivMultSp}.

\item
$\mathrm{Corr}^{\mathrm{fr}}_S=\mathrm{Corr}^\fr(S)$ 
is the $\infty$-category of framed correspondences over $S$ (constructed in \cite{ehksy}).
We write $\fr$ for $\tr$ in the above notation when $\Corr_S=\Corr^\fr_S$.
%In particular, $\SPrefr(S)=\SPre(\Corr^\fr_S)$. 
%$\SPrefrd(S)=\SPred(\Corr^\fr_S)$.
% Note that 
% $\SPrefr_\Sigma(S)\simeq \SPrefrd_\Sigma(S)$ since $\Corr^\fr$ is preadditive.
%$\SPrefrd(S)=\SPred(\Corr^\fr_S)$.

% \item
% $\SPrefr(S)$, $\SPrefr(\SmAff_S)$, $\SPrefrrad(S)$, $\Sh^\fr_\nu(S)$, $\SPre^{\fr,\gp}(S)$,
% $\mathbf{H}^{\fr}(S)$,
% $\mathbf{H}^{\fr,\gp}(S)$,
% $\SH^{S^1,\fr}(S)$,
% $\SH^{\fr}(S)$,
% denote
% % $\SPretr(S)$, $\SPretr(\SmAff_S)$, $\SPretrrad(S)$, $\Sh^\tr_\nu(S)$, $\SPre^{\tr,\gp}(S)$,
% % $\mathbf{H}^{\tr}(S)$,
% % $\mathbf{H}^{\tr,\gp}(S)$,
% % $\SH^{S^1,\tr}(S)$,
% % $\SH^{\tr}(S)$,
% the categories defined above for the case of $\mathrm{Corr}^{\mathrm{fr}}_S$.

% \item 
% For any $S\in\Sch$,
% we consider that functor 
% \[\sSet\to \Aff_S; K\mapsto K_S\] that is
% the left can extension of the functor $\Delta\to \Aff_S$ given by
% the cosimplicial object $\Delta^\bullet_S$.
% For a simplicial set $K$,
% we denote by $K_S\in\Aff_S$ 
% the image of $K$ 
% with respect to the latter functor.
%functor $\sSet\to \Aff_S$ that 
% the left can extension $\sSet\to \Aff_S$ of the functor $\Delta\to \Aff_S$ given by
% the cosimplicial object $\Delta^\bullet_S$.
%and denote by thesame symbol.
% $K_S\in\Delta\SmAff_S$ 
% the smooth simplicial scheme over $S$ 
% that is the geometric realisations of $K$ 
% with respect to the cosimplicial object $\Delta^\bullet_S$ in $\Sch_S$,
% and denote by thesame symbol.

\end{enumerate}
%\end{itemize}
\end{notations}

\subsection{Acknowledgement}
The article combines two parts of research with separate support: 
(1) one part is dedicated to   
the localisation theorem for 
motives with preadditive transfers %correspondences $\infty$-categories, 
% motivic $\infty$-categories 
% with respect to preadditive correspondences $\infty$-categories, 
and
the equivalence $\SH_{\zf}^\fr(S)\simeq \SH^\fr_\nis(S)$, 
see \Cref{th:tautfLoc}, \Cref{prop:relaitve:SHfrS1ZarSHfrSNis}; 
(2) the second part is dedicated to  
the localisation theorem for unpointed motivic homotopy categories with or without transfers, 
see \Cref{sect:Loc_SZSS-Z:istarjsharp}, \Cref{th:tautfLoc:istartjsharp} and \Cref{cor:tautfLoc}. 
% \todo{write or delete}

The research except \Cref{sect:Loc_SZSS-Z:istarjsharp} and \Cref{th:tautfLoc:istartjsharp} 
is supported by the Russian Science Foundation grant 20-41-04401 only. 
The first author is supported for \Cref{sect:Loc_SZSS-Z:istarjsharp} and \Cref{th:tautfLoc:istartjsharp}
by a Young Russian Mathematics award,
%T
the second author is 
supported by the SFB 1085 ``Higher Invariants''  
funded by the Deutsche Forschungsgesellschaft (DFG).
% \todo{separate SFB 1085 Higher Invariants from 20-41-04401, or not}

% is supported by the Russian Since Foundation grant 20-41-04401. 
% The first author is supported for \Cref{th:tautfLoc:istartjsharp}
% by a Young Russian Mathematics award, and
% %T
% the second author is 
% supported by SFB 1085 Higher Invariants.

\renewcommand{\thesubsection}{\thesection.\arabic{subsection}}

\section{Families of categories and topologies}\label{sect:Corrandtau_S}
\subsection{Categories of correspondences.}\label{subsect:SchCorr_S}

%We denote the $\infty$-category of preadditive or additive $\infty$-categories by $ \mathrm{Cat}^{\operatorname{padd}}_\infty$ or $\mathrm{Cat}^{\operatorname{add}}_\infty$.

\begin{definition}\label{def:preaddcorrespondencescategoryCorrS}
Let $S\in \Sch$,
$\mathcal S_S$ be a monoidal subcategory of $\Sch_S$ 
with respect to the monoidal structure $\times_S$.
An \emph{$\infty$-category of correspondences} $\Corr_{S}$
on $\mathcal S_S$ is
a %pointed %\todo{pointed addad} 
$\infty$-category $\Corr_S\in \mathrm{Cat}_\infty$
with the pointed object $\emptyset\in \Corr_S$, 
and a given object $\mathrm{pt}_S\in \Corr_S$,
and an action of the monoidal category $\mathcal S_S$,
\[\mathcal S_S\times \Corr_S \to \Corr_S; \quad
(X, T) \mapsto (X \times_S T),\]
such that the induced functor
\begin{equation}\label{eq:SmStoCorrS}
\mathrm{corr}_S\colon \mathcal S_S\to \Corr_S; \quad
X\mapsto X\times_S  \mathrm{pt}_S 
\end{equation}
is
essentially surjective.
% an essentially surjective functors
% \begin{equation}\label{eq:SmStoCorrS}r\colon \Sm_S\to \Corr_S, S\in \Sch\end{equation}
% and
% the endofunctor $-\times{\Gm}$ on $\Corr_S$
% with the natural equivalence
% $r(X)\times{\Gm}\simeq r(X\times{\Gm})$ for $X\in \Sm_S$,
% and 
% the natural morphism .
% a natural monoidal functor,
% \[\Sm_S\to \mathrm{Func}(\Corr_S, \Corr_S^\prime), \mathrm{pt}\colon *\to \Corr_S,\]
% where $*$ denotes the category with one object and trivial mapping space,
% such that the induced functor
% \begin{equation}\label{eq:SmStoCorrS}\Sm_S\to \Corr_S, S\in \Sch\end{equation}
% and such that $\pi_0(\Sm_S)\cong \pi_0(\Corr_S)$, for each $S$.
% (2) any presheaf on $\Corr_S$ is equipped canonically with the structure of $\mathbb{E}_{\infty}$-monoid.
% The latter condition equivalently claims that \eqref{eq:SmStoCorrS} passes through
% \[\Sm^{\mathbb{E}_{\infty}}_S\to \Corr_S, S\in \Sch,\]
% where $\Sm_S^{\mathbb{E}_{\infty}}$ denotes the universal $\infty$-category that mapping spaces 
% are naturally equivalent to the $\mathbb{E}_{\infty}$-monoids associated with mapping sets in $\Sm_S$.
% If the functor \eqref{eq:CorrSchCataddpadd} lands in $\mathrm{Cat}^{\operatorname{padd}}_\infty$ or $\mathrm{Cat}^{\operatorname{add}}_\infty$ the categories of correspondences are called additive or preadditive.
% For a family of preadditive categories of correspondences $\Corr_S$ we denote by $\Corr^\gp_S$ the corresponding additive one.
The correspondences $\Corr_{S}$ are called \emph{radditive}
if the presheaves $h^\fr_S(X)=\Corr_S(-,X)$ on $\Sm_S$ are radditive for all $X\in\Corr_S$. 
\end{definition}
\begin{example}
(0)
The category $\Sch_{S}$ for $S\in\Sch$ is a category of correspondences over $S\in\Sch$ itself.
%The family of the categories $\Sm_S$ as a categories of correspondences satisfies (Embed).
%
%\todo{do we need preadditivisation}
(1)
The $\infty$-category of framed correspondences $\Corr^\fr_{S}$ from \cite{ehksy}.
\end{example}

Denote by $-\times \Gm\colon \SPre(\Corr_S)\to \SPre(\Corr_S)$ the direct image
endofunctor % on $\SPre(\Corr_S)$ is given by $(-\times\Gm)^*$
induced by the endofunctor $-\times\Gm$ on $\Corr_S$.
%Furthermore, 
Then the endofunctor $-\times{\Gm}$ on $\Corr_S$ induces the endofunctors 
$\Omega_{\Gm}$ and $\Sigma_{\Gm}$ on $\Pre(\Corr_S)$
\begin{equation}\label{eq:OmegaGmSigmaGm}
\Omega_{\Gm}F(X)\simeq \fib(F(X\times\Gm)\to F(X\times\{1\}), \;\;
\Sigma_{\Gm}F = \cofib(F\times\{1\}\to F\times\Gm),
\end{equation}
where $ F\times\{1\}\simeq F$, 
%the functor $-\times \Gm$ on $\SPre(\Corr_S)$ is given by $(-\times\Gm)^*$,
and the right-side morphisms in \eqref{eq:OmegaGmSigmaGm} are induced by the embedding morphism $\{1\}\to\Gm$. 
%vie the left Ka\SPre($
Functors \eqref{eq:OmegaGmSigmaGm} 
agree with the $\Gm$-loop and $\Gm$-suspension on $\SPre(\Sm_S)$
in the sense of natural equivalences
\begin{equation}\label{eq:SigmaGmOmegaGmgamma}\Sigma_{\Gm} r^*\simeq r^* \Sigma_{\Gm}, \Omega_{\Gm} r_*\simeq r_* \Omega_{\Gm}.\end{equation}

\begin{definition}\label{def:ShvtauHHtauA1}
%Let $\mathcal S$ be a subcategory of the category $\Sch_S$ over $S\in \Sch$ .
Let
$\Corr_{S}$
be an $\infty$-category of correspondences on 
$\mathcal S_S$ over $S\in \Sch$ 
such that $\A^1_S\in\mathcal S_S$,
and let $\tau$ be a topology on $\mathcal S_S$.
A presheaf $F\in \Pre(\Corr_S)$ is called 
\emph{$\tau$-sheaf}, for a topology $\tau$ on $\mathcal S_S$ (resp. \emph{$\A^1$-invariant}),
if it goes to the object of such type along the forgetful functor
$\Pre(\Corr_S)\to \Pre(\mathcal S_S)$.
%
% In particular, $F\in \SPre(\Corr_S)$,
% is $\A^1$-invariant, if for any $X\in \Sm_S$ the map $F(X)\to F(\A^1\times X)$ is an equivalence. 
%
%Denote by $\Shv_\tau(\Corr_S)$ the subcategory of $\Pre(\Corr_S)$ spanned by the $\tau$-sheaves.
Define $\HH^\tr_{\Sigma,\tau}(\calS_S)=\HH_{\Sigma,\tau}(\Corr_S)$ as 
the subcategory spanned by $\A^1$-invariant $\tau$-sheaves in $\Pre_{\Sigma}(\Corr_S)$,
and $\HH^\trd_{\Sigma,\tau}(\calS_S)=\HHd_{\Sigma,\tau}(\Corr_S)$ for pointed ones,
\[\SH^{S^1}_{\Sigma,\tau}(\Corr_S)=\HH_{\Sigma,\tau}(\Corr_S)[(S^1)^{\wedge -1}], \;\;
\SH_{\Sigma,\tau}(\Corr_S)=\SH^{S^1}_{\Sigma,\tau}(\Corr_S)[\Gm^{\wedge -1}].\]
\end{definition}

% Given a preadditive family of $\infty$-categories of correspondences $\Corr_{(-)}$,
% a presheaf $F\in \Pre(\Corr_S)$ for $S\in \Sch$ is called $\tau$-sheaf, for a topology $\tau$ on $\Sm_S$ (resp.  $\A^1$-invariant)
% if and only if it goes to the object of such type along the forgetful functor
% $\Pre(\Corr_S)\to \Pre(\Sm_S)$.
%
% Denote 
% $\HH^\tr_\tau(S)=\HH^\tr_\tau(\Sm_S)=\HH_\tau(\Corr_S)$, 
% and similarly for 
% $\SH^{S^1,\tr}_\tau(S)$, 
% and $\SH^\tr_\tau(S)$.
%
% $\SH^{S^1,\tr}_\tau(S)=\SH^{S^1,\tr}_\tau(\Sm_S)=\SH^{S^1}_\tau(\Corr_S)$, 
% $\SH^\tr_\tau(S)=\SH^\tr_\tau(\Sm_S)=\SH_\tau(\Corr_S)$.
%
%For a functor
%Similarly we define the $\infty$-categories
% $\HH^\tr(\SmAff_S)=\HH(\Corr(\SmAff_S))$, $\SH^{S^1,\tr}(\SmAff_S)=\SH^{S^1}(\Corr(\SmAff_S))$, $\SH^\tr(\SmAff_S)=\SH(\Corr(\SmAff_S))$
% with respect to the $\infty$-categories $\SmAff_S$ and their images $\Corr(\SmAff_S)$ in $\Corr_S$ instead of $\Sm_S$ and $\Corr_S$.

A family of $\infty$-categories over a subcategory $\bbS$ in the category $\Sch$,
is a functor $\bbS\to \mathrm{Cat}_\infty$.
Denote by $\Sm_{\bbS}$ the contravariant functor 
\[\Sm_{\bbS}\colon \bbS\to \mathrm{Cat}; S\mapsto \Sm_S.\]
% For a subcategory $\bbS$ of $\Sch$, 
% we denote by $\Sm_\bbS$ the restriction of $\Sm_{\Sch}$ on $\bbS$,
% We write $\Sm_{(-)}$ for $\Sm_{\bbS}$ sometimes.
% Denote by $\Sm_{\Sch}$ the contravariant functor 
% \[\Sm_{\Sch}\colon \Sch\to \mathrm{Cat}; S\mapsto \Sm_S\]
% that takes a morphism $S^\prime\to S$ to the base change functor $\Sch_S\to \Sch_{S^\prime}$.
% For a subcategory $\bbS$ of $\Sch$, 
% we denote by $\Sm_\bbS$ the restriction of $\Sm_{\Sch}$ on $\bbS$,
% and we write $\Sm_{(-)}$ for $\Sm_{\bbS}$ sometimes.

% Denote by $\Sm_{(-)}$ or by $\Sm_{\Sch}$ the contravariant functor 
% \[\Sm_{(-)}\colon \Sch\to \mathrm{Cat}; S\mapsto \Sm_S\]
% that takes a morphism $S^\prime\to S$ to the base change functor $\Sch_S\to \Sch_{S^\prime}$.
% Let $\bbS$ be a subcategory of $\Sch$.
% We denote the restriction of $\Sm_{(-)}$ on $\bbS$ by the same symbol $\Sm_{(-)}$ or by $\Sm_{\bbS}$.

\begin{definition}\label{def:correspondencesCorrS}
Let $\bbS$ be a subcategory of $\Sch$, and 
\[\mathcal S_{\bbS}\colon \bbS\to \mathrm{Cat}\] 
be a functor equipped with a natural embedding $\mathcal S_{\bbS}\hookrightarrow\Sch_{\bbS}$%
%be a subfunctor of $\Sm_{(-)}$
.
%A \emph{family of subcategories} $\mathcal S_{(-)}$ in $\Sm_{(-)}$ over $\Sch$ is a subfunctor.
A \emph{family of $\infty$-categories of correspondences} $\Corr_{\bbS}$ on $\mathcal S_{\bbS}$
is
a contravariant functor 
\begin{equation}\label{eq:CorrSchCataddpadd}\Corr_{\bbS}\colon \bbS\to \mathrm{Cat}_\infty; S\mapsto \Corr_S; f\mapsto f^*\end{equation}
%from the category of noetherian separated schemes to the $\infty$-category of $\infty$-categories
equipped with a natural structure of an $\infty$-category of correspondences over $S$ for each $S\in\bbS$,
and a natural isomorphism 
\[f^*(-)\times_{S^\prime} (X\times_S S^\prime)\simeq f^*(-\times_S X),\]
for each morphism
$f\colon S^\prime\to S$, and $X\in \mathcal S_S$.
%todo
We write $\mathcal S_{(-)}$ for $\mathcal S_{\bbS}$, when $\bbS$ is clear from the context,
and %We
write $\Corr_{\bbS}$, or $\Corr_{(-)}$,
for $\Corr(\mathcal S_{\bbS})$.
%\todo{added}
\end{definition}
%there is a natural equivalence
% and such that 
% for any 
% morphism \[f\colon S^\prime\to S,\] and $X\in \mathcal S_S$
% %todo
%
% %there is a natural equivalence
% \[f^*(-)\times_{S^\prime} (X\times_S S^\prime)\simeq f^*(-\times_S X).\]
%
% \begin{definition}
% If the functor \eqref{eq:CorrSchCataddpadd} lands in $\mathrm{Cat}^{\operatorname{padd}}_\infty$ or $\mathrm{Cat}^{\operatorname{add}}_\infty$ the family of $\infty$-categories of correspondences will be called \emph{additive} or \emph{preadditive}.
% For a preadditive family of $\infty$-categories of correspondences $\Corr_S$, we denote by $\Corr^\gp_{(-)}$ the corresponding additive one.
% \end{definition}
%
%\todo{add about $\HH(\mathcal S_{)-})$}
%Let $\mathcal S_{(-)}$ be a subfunctor of $\Sm_{(-)}$ as above.
Given a family of topologies $\tau$ on $\mathcal S_{\bbS}$ 
as above 
that is compatible with the base change functors,
\Cref{def:ShvtauHHtauA1} gives the respective families of $\infty$-categories. 
% $\Shv_{\Sigma,\tau}(\mathcal S_{(-)})$, 
% $\HH_{\Sigma,\tau}(\mathcal S_{(-)})$,
% $\Shtrd_{\Sigma,\tau}(\mathcal S_{(-)})$, 
% $\HH^\trd_{\Sigma,\tau}(\mathcal S_{(-)})$.

\begin{definition}\label{def:Prepaddadd}
An $\infty$-category $A$ is called \emph{preadditive} if it admits a zero 
object and the map 
$$X \coprod Y \to X \times Y$$
is an equivalence for any objects $X,Y \in A$. In this case, it makes sense to talk about the direct sum of objects, which we denote by $X \oplus Y$.

Preadditive categories are canonically enriched over $\mathbb{E}_\infty$-monoids (see \cite[Proposition 2.3]{UnivMultSp}). %TODO link
An $\infty$-category $A$ is called \emph{additive} if all mapping spaces are group-like with respect to the $\mathbb{E}_\infty$-monoid structure.
\end{definition}

\begin{example}
\begin{enumerate}
\item
The $2$-category of spans of morphisms of $G$-sets $\operatorname{Span}_G$ is 
preadditive,
\cite[Def. 5.7]{Gspans}. %TODO link to Barwick
\item 
Various $\infty$-categories of correspondences in motivic homotopy theory are 
preadditive. In particular, $\Corr_S^\fr$ is preadditive, see \cite{ehksy}, as well as the 
discrete category $\operatorname{Cor}_S$ from \cite{Voe-motives}. %TODO link EHKSY, Voevodsky
\end{enumerate}
\end{example}
\begin{definition}
If the functor \eqref{eq:CorrSchCataddpadd} lands in $\mathrm{Cat}^{\operatorname{padd}}_\infty$ or $\mathrm{Cat}^{\operatorname{add}}_\infty$ the family of $\infty$-categories of correspondences will be called \emph{additive} or \emph{preadditive}.
For a preadditive family of $\infty$-categories of correspondences $\Corr_S$, we denote by $\Corr^\gp_{S}$ the corresponding additive one.
\end{definition}

%\subsection{Finiteness property}\label{subsect:FinE}
\subsection{Continuous families}\label{subsect:FinE}

Let $\bbS\subset \Sch$, $\calS_\bbS\subset\Sch_{\bbS}$ 
be subcategories closed with respect to filtered limits and coproducts
and such that
for any $S\in\bbS$, $z\in S$, $\lSz\in\bbS$, and there is a filtered system of Zariski neighbourhoods $U_\alpha\in\bbS$ of $z$ in $S$ such that $\varprojlim_\alpha U_\alpha=\lSz$, and
any scheme in $\calS_{\lSz}$ equals $X_\lSz=X\times_{U_{\alpha}} \lSz$ of a scheme $X\in \calS_{U_\alpha}$ for some $U_\alpha$.
%todo reread
\begin{definition}\label{def:EmbedCorr}
A family of correspondences $\Corr(\calS_{\bbS})$ %over $\bbS\subset\Sch$ %schemes $S\in \Sch$
satisfies \emph{the property (Embed)}, if for any open immersion $U\to S$ in $\bbS$, there is a fully faithful functor $j_{\#}\colon \Corr_U\to \Corr_S$ that is left adjoint to $j^*\colon \Corr_S\to \Corr_U$.
\end{definition}
\begin{definition}\label{def:FinECorr}
Given a point $z\in S$, for $S\in\bbS$, and 
$X,Y\in \calS_S$,
consider the morphism in 
$\Spc$,
\begin{equation}\label{eq:CorretatodlCorrUalpha}\Corr_{\lSz}(X\times_S \lSz,Y\times_S\lSz)\leftarrow 
\varinjlim_{\alpha} 
\Corr_{U_\alpha}(X\times_S U_\alpha,Y\times_S U_\alpha),\end{equation}
induced by inverse image functors along the morphisms $\lSz\to U_\alpha$,
where $U_\alpha$ runes over the filtered set of Zariski neighbourhoods of $z$. %,
%todo
%and $h^\tr_Y(V)=\Corr_{S}(V,Y)$.
We say that
%a \emph{family of $\infty$-categories of correspondences} 
$\Corr_{\bbS}$
%schemes $S\in \Sch$
is \emph{continuous}, 
%or satisfies \emph{the property (FinE)}, 
if it satisfies (Embed) and
for any $S\in\bbS$, and a point $z\in S$, for any $X,Y\in \calS_S$, 
%todo
the morphism \eqref{eq:CorretatodlCorrUalpha} is 
%an equivalence of spaces,
%i.e. the 
an isomorphism %equivalence 
in the $\infty$-category $\Spc$.
% ,
% \[\Corr_\eta(X\times_S\eta,Y\times_S\eta)\simeq 
% %todo
% %\varprojlim_{\alpha}
% \varinjlim_{\alpha} 
% \Corr_{U_\alpha}(X\times_S U_\alpha,Y\times_S U_\alpha),\]
% %there is the equivalence for the representable presheaves
% % \[h^\tr_\eta(X\times_S\eta)\simeq 
% % %todo
% % %\varprojlim_{\alpha}
% % \varinjlim_{\alpha} 
% % h^\tr_{U_\alpha}(X\times_S U_\alpha),\]
% where $U_\alpha$ runes over the filtered set of Zariski neighbourhoods of $\eta$,
% %todo
% and $h^\tr_Y(V)=\Corr_{S}(V,Y)$.
\end{definition}

\begin{example}
(0)
The %preadditivisation of the 
family of the categories $\Sch_{(-)}$ over $\Sch$ satisfies (Embed).
%The family of the categories $\Sm_S$ as a categories of correspondences satisfies (Embed).
%
%\todo{do we need preadditivisation}
(1)
The family of the $\infty$-category of correspondences $\Corr^\fr_{(-)}$ satisfies (Embed).
\end{example}

%todo
\begin{definition}\label{def:Embed}
Let $\tau$ be a family of topologies on the categories 
$\calS_\bbS$. % for $S\in \Sch$.
We say that $\tau$ satisfies the property (Embed), 
if for any open immersion $U\hookrightarrow S\in \bbS$, 
the canonical embedding functor \[\calS_U\to \calS_S\] preserves and detects $\nu$-coverings.
In other words, 
a morphism $\widetilde X\to X$ in $\calS_U$ is a $\tau$-covering, if 
its image $\widetilde X\to X$ in $\calS_S$ is a $\tau$-covering.
\end{definition}
% \begin{example}\label{ex:embedZarNistf}
% The property (Embed) holds for the Zariski, Nisnevich, and the trivial fibre topologies.
% \end{example}

\begin{lemma}\label{lm:Embednu}
Let $\tau$ be a family of topologies on $\calS_\bbS$ that has the property (Embed).
Then for any open immersion of schemes $j\colon U\to S$ in $\bbS$,
the functor 
\begin{equation*}\label{eq:jusHtriv}
j^*\colon \Pre(\calS_S)\to \Pre(\calS_U)
\end{equation*}
%\mathbf{H}_{\mathrm{triv}}
preserves $\tau$-sheaves.
\end{lemma}
\begin{proof}
Let
$F\in \Shv_\tau(\calS_S)$.
Let
$v\colon \widetilde X\to X$ be a $\tau$-covering in $\calS_U$.
Then $v$ is a $\tau$-covering in $\calS_S$.
Denote by $h(X)$ the representable presheaves, 
and by $h_{\widetilde X}(X)$ the covering sieves
in both categories 
$\calS_S$ and $\calS_U$.
Then $j^*(h(X))=h(X)$, and $j^*h_{\widetilde X}(X)=h_{\widetilde X}(X)$.
%todo sieve
The sequence of isomorphisms 
\[\begin{array}{lcl}
\mathrm{Map}_{\Pre(\calS_U)}(h_{\widetilde X}(X),j^*F) &\simeq &
\mathrm{Map}_{\Pre(\calS_U)}(h_{\widetilde X}(X),F)\\
&\simeq& 
\mathrm{Map}_{\Pre(\calS_U)}(h(X),F)\\
&\simeq& 
\mathrm{Map}_{\Pre(\calS_S)}(h(X),j^*F)
\end{array}\]
implies that $j^* F$ is a $\tau$-sheaf.
\end{proof}

\begin{definition}\label{def:FinProp}
Let $\tau$ be a family of topologies on $\calS_\bbS$. %\todo{$\SmAff_{(-)}$}%noetherian separated todo base change
We say that $\tau$ is \emph{continuous} %property (FinE) 
if $\tau$ satisfies (Embed), and 
for any 
$S\in\bbS$, $X\in \calS_S$, 
point $z\in S$, and 
$\nu$-covering $\widetilde{X_\lSz}\to X_\lSz$, where $X_\lSz=X\times_S \lSz$, 
there is 
a Zariski neighbourhood $U$ of $z$ and 
a $\nu$-covering $\widetilde X_U\to X\times_S U$ 
such that 
the induced morphism $\widetilde X_U\times_U \lSz\to X_\lSz$ is 
a refinement of $\widetilde{X_\lSz}\to X_\lSz$.
\end{definition}
% \begin{example}\label{ex:FinProprtyfortf}
% The property (FinE) holds for any family of topologies $\tau$ on the categories $\SmAff_S$ for $S\in \Sch$ satisfying (Embed) and such that $\tau$ is trivial on the categories $\SmAff_k$ for all fields $k$,
% in particular, the trivial fibre topology from \Cref{ex:tf} has the property.
% \end{example}
% \begin{example}\label{ex:FinProprtyforZarNis}
% The Zariski and Nisnevich topologies over a noetherian separated scheme $S$ have the property (FinE). Note that if (FinE) holds for topologies $\tau$ and $\nu$ then it holds for the topology $\tau\cup\nu$.
% \end{example}
\begin{example}\label{ex:embedZarNistfFinEmbed}
%The properties (Embed) and (FinE) 
%hold for the Zariski, Nisnevich, and the trivial fibre topologies.
The Zariski, Nisnevich, and the trivial fibre topologies are
continuous in sense of \Cref{def:FinProp}.
\end{example}
% \begin{example}\label{ex:FinProprtyfortf:ex:FinProprtyforZarNis}
% The property (FinE) holds for the trivial fibre topology from \Cref{ex:tf},
% the Zariski and 
% the Nisnevich topologies over a noetherian separated scheme $S$.
% \end{example}

\begin{lemma}\label{lm:ContinuityCov}
Let $\tau$ be a continuous family of topologies on $\calS_\bbS$. %satisfies the property (FinE).
Then the functor $j^*\colon \Pre(\calS_S)\to \Pre(\calS_\lSz)$ 
preserves $\tau$-sheaves for each point $z$ of a scheme $S\in\bbS$.
\end{lemma}
\begin{proof}
Any scheme in $\calS_{\lSz}$ equals $X_\lSz=X\times_S \lSz$ of a scheme $X\in \calS_S$.
Given a $\tau$-covering $\widetilde{X_\lSz}\to X_\lSz$, by \Cref{def:FinProp} %(FinE) 
there is a $\tau$-covering $\widetilde X_V\to X\times_S V$, for an Zariski neighbourhood $V$ of $z$ such that  $\widetilde X_\lSz=\widetilde X_V\times_V\lSz\to X_\lSz$ is a refinement of $\widetilde{X_\lSz}\to X_\lSz$.
If $F\in \Pre(\calS_S)$ is a $\nu$-sheaf, then for any Zariski neighbourhood $U$ of $z$ in $V$ 
\[F(X_V\times_V U)\simeq F(\check C(\widetilde X_V\times_V U)).\]
Hence for a filtered system of $U_\alpha$ such that $\varprojlim_\alpha U_\alpha=\lSz$, we have %todo s
\[
j^*F(X_\lSz)\simeq
\varinjlim_\alpha F(X_V\times_V U_\alpha)\simeq 
\varinjlim_\alpha F(\check C(\widetilde X_V\times_V U))\simeq
j^*F(\check C(\widetilde X_\lSz)).
\]
Thus $L_\nu j^*F\simeq F$, and consequently $j^*F$ is a $\tau$-sheaf.
\end{proof}

\begin{lemma}\label{lm:Continuity}
Let $S\in \bbS$, and $S^{(0)}$ denote the union of generic points of $S$,
and $U_\alpha$ be a filtered system of Zariski neighbourhoods, such that $\varprojlim_\alpha U_\alpha=S^{(0)}$.
Denote $j_\alpha\colon U_\alpha\to U$, $j\colon S^{(0)}\to S$.
% Let $U_\alpha$ be the filtered system of dense open subschemes of a scheme $S$.
% Let $S\in \Sch$, $S^{(0)}$ denote the union of generic points of $S$,
% and $U_\alpha$ be a filtered system, such that $\varprojlim_\alpha U_\alpha=S^{(0)}$.
% Denote $j_\alpha\colon U_\alpha\to U$, $j\colon S^{(0)}\to S$.
% % Let $U_\alpha$ be the filtered system of dense open subschemes of a scheme $S$.
The following natural equivalence of the endofunctors on $\Pre(\calS_U)$ holds 
\begin{equation}\label{eq:jjeqjjalpha}\varinjlim_{\alpha} (j_\alpha)_*(j_\alpha)^*\simeq j_*j^*\end{equation}
for the functors $(j_\alpha)_*\colon \Pre(\calS_{U_\alpha})\to \Pre(\calS_U)$ and $j_*\colon \Pre(\calS_{S^{(0)}})\to \Pre(\calS_U)$.

Let 
$\tau$ be a continuous family of topologies on $\calS_\bbS$ %satisfies (FinE) 
in sense of \Cref{def:FinProp},
and
a continuous family of $\infty$-categories of correspondences $\Corr_S$ over $S\in \Sch$ 
%satisfying (FinE) 
in sense of \Cref{def:FinECorr}.
Then equivalence \eqref{eq:jjeqjjalpha}
holds for 
the $\infty$-categories $\Shv_{\tau}(-)$, 
and $\Shv_{\tau}^\tr(-)$. 
\end{lemma}
\begin{proof}
% The cofiltered limit $\varprojlim_\alpha U_\alpha$ equals to the union of generic points $S^{(0)}$ of $S$.
% Consider the continuous functor on $\EssSm_S$ defined by $F$ and denote it by the same symbol.
Since $\varprojlim_\alpha U_\alpha=S^{(0)}$,
\[\varinjlim_{\alpha} (j_\alpha)_*(j_\alpha)^*(F)(X)\simeq \varinjlim_{\alpha}F(X\times_S U_\alpha) \simeq F(X\times_S S^{(0)})\simeq j_*j^*(F)(X).\]
So equivalence \eqref{eq:jjeqjjalpha} for $\infty$-categories $\Pre(-)$ follows.
By \Cref{lm:ContinuityCov}
the functors $j^*$ and $j_*$ on $\Pre(-)$
induces the ones on $\Shv_\tau(-)$
by the restriction along the embedding $\Shv_\tau(-)\to \Pre(-)$.
So equivalence \eqref{eq:jjeqjjalpha}
for $\infty$-categories $\Shv_\tau(-)$ follows
from the one for $\Pre(-)$.
% The similar equivalence for $\Shv_\tau(-)$ follows 
% in view of \Cref{lm:ContinuityCov};
Equivalence \eqref{eq:jjeqjjalpha} 
for $\Shv_\tau^\tr(-)$ 
holds as well 
because of the canonical equivalence 
\[r_*j^*\simeq j^*r_*,\] 
where $r\colon \calS_S\to \Corr(\calS_S)$, $r_*\colon \Shv_{\tau}^\tr(\calS_S)\to \Shv_{\tau}(\calS_S)$,
for any $\Corr_{(-)}$ that is continuous in sense of \Cref{def:FinECorr}. %satisfying (FinE).
%similar isomorphism holds becuase of (FinE) for $\Corr_{(-)}$.
\end{proof}

%\todo{new text : begin}

\subsection{Lifting properties with respect to the affine henselian pairs.}

We discuss the lifting property for discrete presheaves on $\EssSmAff_S$, $S\in\Sch$, with respect to affine henselian pairs, and apply this to framed correspondences.

\begin{definition}\label{def:RLwAHP}
We say that a discrete presheaf of sets $c$ on $\Aff_S$ %\Aff\cap\Sch_S
has \emph{the lifting property with respect to affine henselian pairs},
% or say that $c$
% \emph{satisfies (AHP)},
whenever for any $X\in \Aff_S$ and a closed subscheme $Y\subset X$,
the morphism 
\begin{equation}\label{eq:RLwAHP:c_surj} c(X^h_Y)\to c(Y)\end{equation}
is surjective. 
\end{definition}

\begin{lemma}\label{lm:XYWh}
For any $X\in \Aff_S$, a closed subscheme $Y\subset X$, and a closed subscheme $W\subset X$,
the closed immersion $Y\amalg_{Y\cap W} W^h_{Y\cap W}\leftarrow Y$ is a henselian pair,
and
the closed immersion $X^h_Y\leftarrow Y\amalg_{Y\cap W} W^h_{Y\cap W}$ is a henselian pair.
\end{lemma}
\begin{proof}
The claims follow from the universal property of the henselisation.
\end{proof}

\begin{lemma}\label{lm:henshairliftstructuredcofsmooth}
%\label{lm:henshairliftFrsmooth}
Suppose a discrete presheaf of sets $c$ on $\Aff_S$
has the lifting property with respect to affine henselian pairs.
%satisfies (RLwAHP).
Then for any $X\in \Aff_S$, a closed subscheme $Y\subset X$, and a closed subscheme $W\subset X$, the morphism 
\[ c(X^h_Y)\to c(Y\amalg_{Y\cap W} W^h_{Y\cap W})\]
is surjective. 
\end{lemma}
\begin{proof}
%Applying \Cref{def:RLwAHP} to 
Applying the surjectivity of \eqref{eq:RLwAHP:c_surj} to %from 
$X$ being $X^h_Y$, and $Y$ being $Y\amalg_{Y\cap W} W^h_{W\cap Y}$
by \Cref{lm:XYWh} we get the claim.
\end{proof}
% For any $S\in\Sch$ and a closed subscheme $Z$ in $S$ 
% denote
% %\[L_{\A^1_{S,Z}}\colon \PSh(\Aff_{S,Z})\to \PSh(\Aff_{S,Z}); F\mapsto F^{\Delta^\bullet_{B,Z}},\]
% where $\Delta^\bullet_{B,Z}$ is an .
For any $U\in\Sch$, we consider the functor \[\sSet\to \Aff_U; K\mapsto K_U\] that is
the left can extension of the functor $\Delta\to \Aff_U$ given by
the cosimplicial object $\Delta^\bullet_U$.

\begin{corollary}\label{cor:DeltatrivfibhenspairsmoothLRwAHP}
Suppose a discrete presheaf of sets $c$ on $\Aff_S$
has the lifting property with respect to affine henselian pairs in sense of \Cref{def:RLwAHP},
%satisfies (RLwAHP) 
and satisfies closed gluing on $\Aff_S$.
Let $U\in \AffSm_{S}$, and $Y\subset U$ be a closed subscheme.
Then the morphism of simplicial sets
\[c((\Delta^\bullet_U)^h_{Y})\to c(\Delta^\bullet_Y).\]
is a trivial fibration, where $(\Delta^\bullet_U)^h_{Y}$ denotes the cosimplicial object with terms $(\Delta^n_U)^h_{(\Delta^n_Y)}$.
\end{corollary}
\begin{proof}
Let $K\to N$ be an injection of simplicial sets.
Then there are the induced closed immersions $K_U\to N_U$, $N_Y\to N_U$, $K_Y\to K_U$, see \Cref{den:introductionnotation}.
%Recall notation $Y=U\times_S Z$.
% Note also that in view of our notation there are isomorphisms of cosimplicial objects in $\Aff_Y$
% \[
% U\times_S\Delta^\bullet_S\cong \Delta^\bullet_U,
% U\times_S\Delta^\bullet_Y\cong \Delta^\bullet_Y.
% \]
% \[U\times_S\Delta^\bullet_S\cong \Delta^\bullet_U,
% U\times_S\Delta^\bullet_Y\cong \Delta^\bullet_Y.\]
Then since $c$ satisfies closed gluing,
the morphism of sets
\[ 
c((\Delta^\bullet_U)^h_Y)^{N} \to 
c((\Delta^\bullet_U)^h_Y)^{K}\times_{
(c(\Delta^\bullet_Y)^{K})}
c(\Delta^\bullet_Y)^{N}
\]
equals
\[
c((N_U)^h_{Y})\to 
c((K_U)^h_{Y})\times_{
c(K_Y)}
c(N_Y)\cong
c((K_U)^h_{Y} \amalg_{K_Y} (N_Y)),
\]
where
$(N_U)^h_Y = (N_U)^h_{N_Y}$, $(K_U)^h_Y = (K_U)^h_{K_Y}$.
The last morphism is surjective 
by \Cref{lm:henshairliftstructuredcofsmooth} applied to 
$X=N_U$, 
$Y=N_Y$,
$W= K_U$.
\end{proof}

\begin{example}\label{ex:LRwAHP}
The following presheaves satisfy the lifting property with respect to affine henselian pairs:
\begin{itemize}
\item[0)] The representble presheaves of the category $\SmAff_{S}$, $S\in \Aff$.
\item[1)] The presheaves $\Fr_+(-,X)$ for $X\in\SmAff_S$ of framed correspondences from \cite{VoevNotesFrCor,GP14}, see \cite[Lemma A11]{DKO:SHISpecZ}. 
\item[2)] The presheaves of \emph{normally} framed correspondences $h^\nfr(X)$ for $X\in \SmAff_S$ defined in \cite{ehksy} and \cite{Nesh:nfr_GN} because of the representability by the smooth affine scheme for smooth affine $X$ \cite{ehksy},
\item[3)] The presheaves $h^\pfr(X)=\Fr^{\mathrm{st:id}}(-,X)$ for $X\in\SmAff_S$ defined in \cite[Definition 7]{SmModelSpectrumTP} by the respective representability result. 
\end{itemize}
\end{example}

\begin{definition}\label{def:CorrAHP_first}
A family of correspondences $\Corr(\EssSmAff_S)$
satisfies the property (AHP)(1), %\footnote{AHP is a shorten for affine henselian pairs} 
if
for any $S\in\Aff$, and $X\in \EssSmAff_S$
there is a discrete presheaf of sets $c$ on $\EssSmAff_S$
such that 
$c$ satisfies %\todo{has} 
the lifting property with respect to affine henselian pairs in sense of \Cref{def:RLwAHP},
and
there is an isomorphism in $\Pre(\EssSmAff_S)$ %weak equivalence
\[L_{\A^1}h^\tr(X)\simeq L_{\A^1}c\]
 in $\Pre(\EssSmAff_S)$.
%where $L_{\A^1} = (-)^{\Delta^\bullet_S}.$
\end{definition}

\begin{example}
(0)
The %preadditivisation of the 
family of the categories $\EssSmAff_{\Aff}$ satisfies (AHP)(1) by \Cref{ex:LRwAHP}(0).

(1)
The family of the $\infty$-category of correspondences $\Corr^\fr(\EssSmAff_\Aff)$ satisfies (AHP)(1),
because by \cite[Corollary 2.3.25]{ehksy} it follows the equivalence 
$L_{\A^1}h^\fr(E)\simeq L_{\A^1}h^\nfr(E)$ on the category $\EssSmAff_S$,
while $h^\nfr(E)$ satisfies the lifting property with respect to affine henselian pairs by \Cref{ex:LRwAHP}(2).
% The then claim follows by \Cref{cor:LA1trivfibhenspairsmoothLRwAHP} and \eqref{ex:LRwAHP}(2).
Similar argument holds with the use any one of \Cref{ex:LRwAHP}(1), and \Cref{ex:LRwAHP}(3) as well.
% , applying \cite[]{}\todo{reference 2.2.20; more general for smooth affine} for $h^\nfr(E)$, or \cite[Proposition 3]{SmModelSpectrumTP} for $h^{\pfr}(E)$. 
% because by \cite[Corollary 2.3.25]{ehksy} there is an equivalence 
% $L_{\A^1}h^\fr(E)\simeq L_{\A^1}\Fr(-,E)$ on the category $\SmAff_S$,
% while $\Fr(-,E)$ satisfies the lifting property with respect to affine henselian pairs by \Cref{ex:LRwAHP}(1).
% % The then claim follows by \Cref{cor:LA1trivfibhenspairsmoothLRwAHP} and \eqref{ex:LRwAHP}(2).
% Similar argument holds with the use each one of \Cref{ex:LRwAHP}(2),(3), 
% applying \cite[]{}\todo{reference 2.2.20; more general for smooth affine} for $h^\nfr(E)$, or \cite[Proposition 3]{SmModelSpectrumTP} for $h^{\pfr}(E)$. 
%
% By \cite[Corollary 2.3.25]{ehksy} and by \cite[Proposition 3]{SmModelSpectrumTP} there are equivalences 
% $L_{\A^1}h^\tfr(E)\simeq L_{\A^1}h^\nfr(E)\simeq L_{\A^1}\Fr(E,-)\simeq L_{\A^1}h^{\pfr}(E)$ on the category $\SmAff_S$.
% So summarising $L_{\A^1}h^\tfr(E)\simeq L_{\A^1}h^\fr(E)$ on the category $\SmAff_S$, for any $h^\fr\in\{\Fr,h^\nfr,h^\pfr\}$.
% The then claim follows by \Cref{cor:LA1trivfibhenspairsmoothLRwAHP}.
\end{example}

\begin{definition}\label{def:CorrAHP_second}
A family of $\infty$-categories of correspondences $\Corr(\EssSmAff_{\Aff})$ 
satisfies the property (AHP)(2), 
if
for any 
$S\in \Aff$, and a closed subscheme $Z$,
such that $S^h_Z\simeq S$,
\[\Corr_S(S,X^h_{X\times_S Z})\simeq \Corr_S(S,X)\]
for
any 
$X\in \EssSmAff_S$.
%for all $U,X\in\EssSmAff_S$.
\end{definition}

\begin{example}
(0)
The %preadditivisation of the 
family of the categories $\EssSmAff_{\Aff}$ satisfies (AHP)(2).

(1)
The family of the $\infty$-category of correspondences $\Corr^\fr(\EssSmAff_\Aff)$ satisfies (AHP)(2).
Indeed, the spaces $\Corr^\fr_S(U,X)$ are the spaces of the data
\begin{equation}\label{eq:tfrspanandframing}
U\xleftarrow{f}Z\xrightarrow{v} X, \quad \tau\colon \mathcal L_f\simeq 0,
\end{equation}
where $f$ is finite flat lci, and $\mathcal L_f$ is the cotangent complex of $f$ in the K-theory $\infty$-groupoid $K(Z)$.
% \todo{write the span}
So the claim follows by point (0) applied to morphisms $v$ in $\EssSmAff_S$ from \eqref{eq:tfrspanandframing}.
% , because
% the role of $X$ in \eqref{} is contained in the right side arrow that 
\end{example}

\begin{definition}\label{def:CorrAHP}
A family of $\infty$-categories of correspondences $\Corr(\EssSmAff_\Aff)$ %\todo{$\Aff$ or $\Sch$}
satisfies the property (AHP)\footnote{affine henselian pairs}, %\footnote{AHP is a shorten for affine henselian pairs} 
if
it satisfies both (AHP)(1), and (AHP)(2).
\end{definition}

\subsection{Localisation property}\label{subsect:LocProp}
%and grouplike sheaves with transfers

%Let $S$ be a scheme, $Z$ be a closed subscheme.

In this section, \begin{itemize}
\item $\bbS$ is a category of schemes 
that contains all open and all closed subschemes of any $S\in\bbS$.  
\item $\Corr(\calS_\bbS)$ is 
a continuous family of $\infty$-categories of radditive correspondences in sense of \Cref{def:FinECorr},
\item $\tau$ is a continuous family of topologies on 
$\calS_{\bbS}$ %EssSmAff
in sense of \Cref{def:FinProp}.
\end{itemize}

% Let $\bbS$ be a category of schemes 
% that contains all open and all closed subschemes of any $S\in\bbS$.  
% % ny one from the list 
% % $\Sch$, $\Aff$, $\Schns$, $\Affns$
% Let $\Corr(\calS_\bbS)$ be 
% a family of an $\infty$-categories 
% % where $\bbS$ is any one from the list 
% % $\Sch$, $\Aff$, $\Schns$, $\Affns$,
% that 
% is continuous
% %satisfies (FinE) 
% in sense of \Cref{def:FinECorr}, and
% % and 
% % satisfies (AHP) in sense of \Cref{def:CorrAHP}.
% %
% %Let $i\colon Z\to S$ be a closed immersion of affine schemes.
% % Let $\tau$ be a topology on $\Sch_S$ such that 
% % $\tau$ has finite type
% % %satisfies (FinE) 
% % in sense of \Cref{def:FinProp},
% % $\tau\supset \tf$, and $\wtau=\stau$ on $\SmAff_{S,Z}$, see \Cref{def:wtaustau}.
% %
% % Let $\Corr_{(-)}$ be 
% % a preadditive family of $\infty$-categories of correspondences on $\SmAff_{(-)}$ over $\Sch$,
% % and
% % $\tau$ 
% %  be family of topologies on $\SmAff_{(-)}$,
% %  that has finite type
% % % satisfy 
% % %%Suppose $\Corr_S$ and $\tau$ satisfy the property 
% % %(FinE)
% % in sense of \Cref{def:FinECorr} and \Cref{def:FinProp} respectively,
% % %\todo{necessary or not}
% % and $\Corr_{(-)}$ satisfy (HU) in sense of \Cref{def:CorrHU}. 
% %
% $\tau$ be a family of topologies on 
% $\calS_{\bbS}$ %EssSmAff
% that is continuous in sense of \Cref{def:FinProp}.
% that induces topologies on the categories $\SmAff_S$ over a 
%of 
% separated noetherian scheme $S$
% such that the base change functors are continuous. 

\begin{definition}\label{def:LocLocAff}
%Given 
% For a scheme $S\in \Sch$, consider
% a closed immersion $i\colon Z\to S$, and the open embedding $j\colon S-Z\to S$.

We say that $\tau$ satisfies \emph{the localisation property} with respect to $\Corr(\calS_\bbS)$, 
if 
for any $S\in\bbS$, 
a closed immersion $i\colon Z\to S$, and the open embedding $j\colon S-Z\to S$%$i\colon Z\to S$, and $j\colon S-Z\to S$ as above
,
for any $F\in \HHtrd_{\Sigma,\tau}(\calS_{S})$, 
the canonical sequence 
\[i_* i^! F\to F \to j_* j^* F,\]
provided by the adjunctions $i_*\dashv i^!$, $j^*\dashv j_*$, 
\[\HHtrd_{\Sigma,\tau}(\calS_{Z})\rightleftarrows \HHtrd_{\Sigma,\tau}(\calS_{S})\rightleftarrows\HHtrd_{\Sigma,\tau}(\calS_{S-Z}),\]
is a fibre sequence, 
see \Cref{section:LocalisationTheoremtf} for the definitions of $i_*,i^!,j_*,j^*$. %sect:LocThConclusion%section:LocalisationTheoremtf
\end{definition}

\begin{lemma}\label{lm:Loc(Aff)detectssheaves}
Suppose $\Corr(\calS_\bbS)$ is preadditive, and
suppose that $\tau$ satisfies the localisation property with respect to $\Corr(\calS_\bbS)$.
Let
$\nu$ be a continuous 
family of topologies that contains $\tau$.
Then for any %noetherian 
$S\in \bbS$ of finite Krull dimension,
the functor
\[\SH^{S^1,\tr}_{\Sigma,\tau}(\calS_S)\to \prod_{z\in S}\SH^{S^1,\tr}_{\Sigma,\tau}(\calS_z); F\mapsto (i^!_z j^*_{\lSz}F)_{z\in S},\]
%; F\mapsto (j^*i^!F)
% \[\Htrgp_{\nu\cup\tau}(\SmAff_S)\to \prod_{z\in S}\Htrgp_\tau(\SmAff_z)\]
where $j\colon \lSz\to S$, $i_z\colon z\to\lSz$,
detects $\nu$ sheaves,
and similarly for $\Htrgp_{\Sigma,\tau}(-)$, and $\SH^{\tr}_{\Sigma,\tau}(-)$.
\end{lemma}
\begin{proof}
The claim is tautological for fields.
Assume the claim for all base schemes of dimension less than $\dim S$.
Let $F\in \SH^{S^1,\tr}_{\tau}(\calS_S)$ whose image in $\SH^{S^1,\tr}_\tau(\calS_z)$ is a $\nu$-sheaf for each $z\in S$.
Consider the cofiltered system of closed subschemes $Z_\alpha$ of positive codimension in $S$.
Then 
\[(i_\alpha)_*(i_\alpha)^! F \simeq \fib(F \to (j_\alpha)_*(j_\alpha)^* F),\]
where $j_\alpha\colon S-Z_\alpha\to S$, and $i_\alpha\colon Z_\alpha\to S$ are the canonical embeddings.
Then
\begin{equation}\label{eq:iFfibFjF}\varinjlim_\alpha (i_\alpha)_*(i_\alpha)^! F \simeq \fib(F \to \varinjlim_\alpha(j_\alpha)_*(j_\alpha)^* F),\end{equation}
The projective limit of the schemes $S-Z_\alpha$ equals the union of generic points of $S$. Denote it by $S^{(0)}$ and by $j\colon S^{(0)}\to S$ the canonical embedding,
then by \Cref{lm:Continuity}
\[\varinjlim_\alpha(j_\alpha)_*(j_\alpha)^* F\simeq j_*j^* F.\]
By the assumption it follows that $(i_\alpha)_*(i_\alpha)^! F$, and $j_*j^* F$ are $\nu$-sheaves.
By \eqref{eq:iFfibFjF} it follows that
\begin{equation*}\label{eq:FfibjFiF[1]}F \simeq \fib( j_*j^* F \to \varinjlim_\alpha i_*i^! F[1] ).\end{equation*}
Hence $F$ is a $\nu$-sheaf.
Thus the claim for $\SH^{S^1,\tr}_{\Sigma,\tau}(-)$ is proved.
The claim on $\Htrgp_{\Sigma,\tau}(-)$ follows, 
because of the embedding $\Htrgp_{\Sigma,\tau}(-)\hookrightarrow\SH^{S^1,\tr}_{\Sigma,\tau}(-)$. 
The claim on $\SH^{\tr}_{\Sigma,\tau}(-)$ follows, 
because the functor 
\[\prod_{l\in\mathbb Z}\Omega^{l}\colon\SH^{\tr}_{\Sigma,\tau}(\calS_S)\to \prod_{l\in\mathbb Z}\SH^{S^1,\tr}_{\Sigma,\tau}(\calS_S)\] detects $\nu$-sheaves.
\end{proof}

\section{Zariski fibre topology}\label{sect:zf} 

%Throughout the rest of the paper we fix a category of base schemes $\mathbf S$.
% \todo{Is the previous phrase needed: "Throughout the rest of the paper we fix a category of base schemes $\mathbf S$."}
Throughout this section \begin{itemize}
\item $\mathcal S_{(-)}$ is any one from the list $\Sch_{(-)},\Aff_{(-)},\Sm_{(-)},\SmAff_{(-)}$, 
\item $\mathbf S=\Schfns,\Afffns$.
\end{itemize}
Let $\nu$ be a family of subtopologies of the \'etale topology on 
the family of categories $\mathcal S_{\bbS}$. % over the category of base schemes $\mathbf S$,
% where $\mathcal S_{(-)}=\Sch_{(-)},\Aff_{(-)},\Sm_{(-)},\SmAff_{(-)}$, $\mathbf S=\Schfns,\Afffns$.
%$\Sch_{(-)}$ over $\Sch$, or $\Aff_{(-)}$ over $\Aff$.
%We write for $\Sch_{(-)}$ over $\Sch$ in what follows.

\begin{definition}\label{def:nuf}
Given a family of subtopologies $\nu$ of the \'etale topology on $\calS_\bbS$,
define the family of subtopologies $\nuf$ of the \'etale topology on $\calS_{\bbS}$ 
as the strongest one
such that for each reduced zero-dimensional scheme $z\in\bbS$ the topology $\nuf$ on $\calS_z$ %on\Sch_ %is equal or weaker then 
is contained in $\nu$. 
\end{definition}
\begin{remark}\label{rem}
%Let $\tau$ be a family of subtopologies of the Nisnevich topology on $\Sch_{(-)}$ over $\Sch$.
A $\nuf$-covering $\wX\to X$ in $\calS_{S}$ for $S\in\bbS$
is a Nisnevich covering that base change along the morphism $z\to S$ for each point $z\in S$ is a $\nu$-covering.
\end{remark}
% \begin{remark}
% The correctness of \Cref{} requirfollows because $\tauf$-coverings from \Cref{} define a topology.
% \end{remark}
\begin{example}
Let $\zf$ be the topology $\nuf$ for $\nu$ being Zariski topology.
We call it the Zariski fibre topology.
\end{example}
%\subsection{}

%Let us recall the definition of the trivial fibre topology, from \cite[Definition 3.1]{DKO:SHISpecZ}.
Let us recall the definition of so called $\tf$-topology, the trivial fibre topology
introduced in \cite[Definition 3.1]{DKO:SHISpecZ}.
\begin{definition} %[\protect{\cite[Definition 3.1]{DKO:SHISpecZ}}]
(1)
We say that a morphism $v\colon \widetilde X\to X$ in $\calS_S$ is a $\tf$-covering, if it is \'etale affine, and for each $z\in S$, there is a morphism $X\times_S z\to \widetilde X$ that composite with $w$ equals the morphism $X\times_S z\to X$.
% We say that a morphism $\widetilde X\to X$ in $\calS_S$ is a $\tf$-covering, if it is \'etale affine, and for each $z\in S$, there is a lifting $X\times_S z\to \widetilde X$.

(2)
We define a $\tf$-square to be a pullback square of the form 
\[\xymatrix{
U^\prime\ar@{^(->}[r]\ar[d] & X^\prime\ar[d]^{f}\\
U\ar@{^(->}[r]^j & X
}\]
where 
$f$ is \'etale affine, and there is a closed subscheme $Z$ in $S$ such that
$U=X-Y$,
and
$Y\times_X X^\prime\simeq Y$,
where 
$Y=Z\times_S X$,
and
$j$ is the open immersion.
\end{definition}
% \begin{proposition}
\begin{remark}
It is shown in \cite[\S 3]{DKO:SHISpecZ}
$\tf$-squares from point (2) define a regular bounded cd-structure 
on $\Aff_S$ or $\Sch_S$. 
%and %generate a completely decomposable topology on 
For $S\in\Schfns$, 
the induced completely decomposable topology coincides with the topology defined by $\tf$-coverings from point (1)
similarly to Nisnevich topology \cite[Prop. 3.4.1]{Morel-Voevodsky}. 
%Since any $\tf$-suqare is a Nisnevich square, $\tf$-topology is a subtopology of the Nisnevich one.
\end{remark}
% Recall that \cite{DKO:SHISpecZ} introduces so called trivial fibre topology, $\tf$-topology, that is the complete decomposible topology on $\Sch_S$ 
% generated by the squares of the form \eqref{eq:NisSqSnieghbourhhdZ} such that $Y=Z\times_S V$, $Y^\prime=V^\prime\times_S Z$, and $e$ is \'etale affine, see \cite[Defintion 3.1]{DKO:SHISpecZ}.
%
According to \cite[Remark 3.7]{DKO:SHISpecZ} the $\tf$-topology on $\Aff_S$ over affine $S$ 
is the strongest subtopology of the Nisnevich topology that is trivial over the residue fields.
We prove the following general statement.
\begin{lemma}\label{lm:nuf=nucuptf}
% Let $\nu$ be
% such that
Suppose that $\nu$ is continuous, %\todo{continuous}
and furthermore,
for any $S\in\bbS$, $X\in\calS_S$, $z\in S$, and a 
$\nu$-covering $u\colon Y\to X_z$ over $z$,
there is 
%an \'etale neighbourhood $\tilde S$ of $z$ in $S$,
%and a $\nu$-covering $v\colon \wX\to X_{\tilde S}$ over $\tilde S$
a $\nu$-covering $v\colon \wX\to X_{S^h_z}$ over $S^h_z$
that base change $v_z\colon \wX_z\to X_z$ along the morphism $z\to S^h_z$ is a $\nu$-refinement of $u$.
% Suppose that 
% for any $S\in\Sch$, $X\in\Sch_S$, $z\in S$, and a 
% $\nu$-covering $v_z^\prime\colon \wX_z^\prime\to X\times_S z$ in $\Sch_z$,
% there is $\nu$-covering $v\colon \wX\to X$ 
% that base change $v_z\colon \wX\times_S z\to X\times_S z$ along the morphism $z\to S$ is a refinement of $v_z^\prime$.
%
Then \[\nuf = \nu\cup \tf,\] and $\nuf$ is continuous.
\end{lemma}
%\todo{induction of $\dim S$, finite type continuous}
\begin{proof}
By \Cref{def:nuf}, we have $\nu\cup\tf\subset\nuf$.
We prove the converse implication by the induction 
on $\dim S$. The claim is trivial for $S=\emptyset$, and suppose that claim for all because schemes of dimension less then $\dim S$.
Given a $\nuf$-covering $v\colon\wX\to X$ over $S$, 
we are going to show that there is %to construct 
a $\nuf$-refinement of $v$ that is a $\nu\cup\tf$-covering.
%Then the claim follows because $
%
Since and Nisnevich covering of $S$ is a $\tf$-covering over $S$, and since $\nu$ is continuous,
without loss of generality we can assume that $S=S^h_z$ for $z\in S$.
Consider the $\nu$-covering 
$v_z\colon \wX_z\to X_z$. %over $z$.
By assumption, there is 
a $\nu$-covering $\wX^\prime\to X$ such that the morphism
$\wX^\prime_z\to X_z$ is a $\nu$-refinement of $v_z$.
Then the morphism 
\begin{equation}\label{eq:wXpwX->wXp}\wX^\prime\times_X \wX\to \wX^\prime\end{equation}
is a $\nuf$-covering over $\tilde S$, and consequently it is \'etale.
The base change of \eqref{eq:wXpwX->wXp} along $z\to \tilde S$ equals $\wX^\prime_z\times_X\wX_z\to \wX^\prime_z$
and has a section provided by the morphism $\wX^\prime_z\to \wX_z$.
Then there is an open subscheme $V\subset \wX^\prime\times_X \wX$ such that the morphism $V\to \wX^\prime$ is an \'etale neighbourhood of $\wX^\prime_z$.
So the morphism
\[w\colon W = V\amalg (\wX^\prime\times_S (S-z))\to \wX^\prime\]
is a $\tf$-covering.
The morphism $\wX\times_S V\to V$ is a $\tf$-covering, because it is \'etale and admits a section given by $V\hookrightarrow \wX^\prime\times_X \wX\to \wX$.
The morphism $\wX\times_{S}(\wX^\prime\times_S (S-z))\to (\wX^\prime\times_S (S-z))$ is a $\nu\cup\tf$-covering by the inductive assumption since $\dim(S-z)<\dim S$.
Thus the morphisms in the sequence 
\begin{equation}\label{eq:seq:nucuptf}
\wX\times_S W
\to
W
%V\amalg (\wX^\prime\times_S (S-z))
\to 
\wX^\prime
\to 
X
\end{equation}
are $\nu\cup\tf$-coverings,
while the morphisms in the sequence
\begin{equation}\label{eq:seq:nuf}
\wX\times_S W
\to 
\wX
\to 
X
\end{equation}
are $\nuf$-coverings.
So the claim follows because the composites of \eqref{eq:seq:nucuptf} and \Cref{eq:seq:nuf} are equal.
% %\[v_z\colon \wX_z=\wX\times_S z\to X\times_S z=X_z\] over $z$.
% %eq:wXpwX->wXp
% By the assumption there is 
% an \'etale neighbourhood $\tilde S$ of $z$, %S^\prime
% and a $\nu$-covering $\wX^\prime\to X_{\tilde S}$ such that the morphism
% $\wX^\prime_z\to X_z$ is a $\nu$-refinement of $v_z$.
% Then the morphism 
% \begin{equation}\label{eq:wXpwX->wXp}\wX^\prime\times_X \wX\to \wX^\prime\end{equation}
% is a $\nuf$-covering over $\tilde S$, and consequently it is \'etale.
% The base change of \eqref{eq:wXpwX->wXp} along $z\to \tilde S$ equals $\wX^\prime_z\times_X\wX_z\to \wX^\prime_z$
% and has a section provided by the morphism $\wX^\prime_z\to \wX_z$.
% Then there is an open subscheme $V\subset \wX^\prime\times_X \wX$ such that the morphism $V\to \wX^\prime$ is an \'etale neighbourhood of $\wX^\prime_z$.
% %the morphism
% Thus the composite morphism 
% \[\coprod_{z\in S}(V\amalg (\wX^\prime-\wX^\prime_z))\to \coprod_{z\in S}\wX^\prime\to \coprod_{z\in S}X_{\tilde S}%\to \wX
% \to X\]
% \[\xymatrix{
% \coprod_{z\in S}(V\amalg (\wX^\prime-\wX^\prime_z))\to \coprod_{z\in S}\wX^\prime\to 
% \coprod_{z\in S}X_{\tilde S}%\to \wX
% \to X\]
%
% is the required $\nu\cup\tf$-covering.
\end{proof}

\begin{example}
% Let $\zf$ be the topology $\nuf$ for $\nu$ being Zariski topology, then $\zf=\zar\cup\tf$.
% We call $\zf$ Zariski fibre topology.
$\zf=\zar\cup\tf$.
\end{example}

\section{Localisation theorem for \texorpdfstring{$\SH^{\fr}_{\zf}(\SmAff_S)$}{SHfrzf(SmAffS)}.}\label{section:LocalisationTheoremtf} %todo check ,S^1

In the section, 
generalizing the localisation theorems from 
\cite{Morel-Voevodsky,AyoubI,Hoyois-framed-loc,DKO:SHISpecZ}, %Morel-Voevodsky,Ayo-Loctheorem
we prove a localisation theorem for $\infty$-categories 
$\HH_\tau^\bullet(\SmAff_S)$ and $\HH^\fr_\tau(\SmAff_S)$ 
for some class of topologies $\tau$ on $\Sm_S$ that includes $\tf$, 
and 
%the the Zarisky fibre topology 
$\zf$, and $\nis$.  
% and
% for the $\infty$-categories $\HHd_\tau(S)$ and $\HH^\fr_\tau(S)$ for the second two topologies.
% The topology $\zf$ is the main example for our application.
%in particular
% For our application, the main example is the Zarisky fibre topology$\tau=\tf\cup\zar$.

% The proof is based on the principles of \cite{DKO:SHISpecZ} that covers $\tau=\tf$, 
% though this general argument is formally independent from the most complicated results of \cite{DKO:SHISpecZ}.
% % Indeed, \cite{DKO:SHISpecZ} provides for $\tau=\tf$ 
% % some results that are stronger and more complicated
% % than Localisation Theorem,
% % and we consider that they can be generalized as well for all topologies considered here.
% % While the mentioned improvements of Localisation Theorem are necessary for the aims of \cite{} 
% % %Nevertheless, since 
% % Localisation Theorem is enough for the current application, 
% % that is why we do not deal with that, 
% % and this helps us to keep the text being short. 
% Indeed, for $\tau=\tf$, \cite{DKO:SHISpecZ} provides 
% the pair of results \cite[Theorem 11.2, Theorem 11.3]{DKO:SHISpecZ},
% that combination is much stronger and more complicated
% than the discussed here Localisation Theorem.
%,and are necessary for the aims of \cite{DKO:SHISpecZ}

The proof is based on the principles of \cite{DKO:SHISpecZ} that covers the case $\tau=\tf$. 
In fact, in this case \cite{DKO:SHISpecZ} proves two results \cite[Theorem 11.2, Theorem 11.3]{DKO:SHISpecZ},
that in combination are much stronger and more involved 
than the discussed here Localisation Theorem.
Though we think that a similar enhancement of the Localisation Theorem 
holds for all topologies $\tau$ considered here,
we do not study it here, %mentioned iprovements 
%,
since classical form of Localisation Theorem is enough for the purposes of the text. 
This restriction helps us keep the text short and self-contained at the same time. 
The presented here general argument is formally independent at least from the most complicated results of \cite{DKO:SHISpecZ} 
including the proof of the Localisation Theorem, and from more classical proofs of Localisation Theorems.
%
% our
% It allows to avoid proving and studying of rigidity properties of presheaves, that occupies large part of the original reasoning,
% and replace it by the checking of the fibrancy property of $\infty$-categorical equivalences
% that appears being formally easier and is equivalent.
% Actually, we translate to the setting of the category $\Corr^\fr$ and $\Hfr(S)$ defined in \cite{ehksy} %recall the proof of 
% the Localisation Theorem for $\mathbf H_\tf(B)$ from \cite{DKO:SHISpecZ}.
% , i.e.
% the motivic homotopy category with respect to trivial fibre topology 
% from 
% \cite[Section 14.2]{DKO:SHISpecZ},
% combining with some results from the previous sections,
% and translate it to the setting of the category $\Corr^\fr$ and $\Hfr(S)$ defined in \cite{ehksy}.
% The argument is written in general form and recovers Localisation Theorems for $\mathbf H(B)$ and $\Hfr(B)$ and $\mathbf H_\tf(B)$. % at the same time
%
% \item[]
%
% Throughout this section $\bbS$ is subcategory of $\Sch$ such that for any $S\in\bbS$ all closed and open subschemes of $S$ are in $\bbS$.
Throughout this section 
\begin{itemize}
\item $\bbS$ is 
%one of the categories 
$\Aff$, or 
%$\Affns$, 
$\Afffns$.
% $\tau$ is a topology on $\EssSmAff_{\bbS}$,
\item $\Corr_{(-)}=\Corr(\Sch_\bbS)$ is a family of $\infty$-categories of radditive correspondences
that satisfies (AHP) in sense of \Cref{def:CorrAHP_second},
and satisfies the closed gluing on affine schemes.
\item $S\in \bbS$, and $Z$ is a closed subscheme of $S$. 
\end{itemize}

\subsection{}\label{subsect:SmSZ}
%$\SmAff^\fr_{S,Z}$ $\SmAff_{S}$ $\SmAff_{S-Z}$
Following \cite{DKO:SHISpecZ}, for a 
given scheme $X\in \SmAff_S$, 
denote $X_Z=X\times_S Z$, 
denote $X_{S-Z}=X\times_S (S-Z)$,
and denote by $X^h_Z=X^h_{X_Z}$ the henselisation of $X$ along $X_Z$.
% Following \cite{DKO:SHISpecZ}, for a 
% given $S\in\bbS$, 
% a closed subscheme $Z\subset S$,
% and a scheme $X\in \SmAff_S$, denote 
% $X_Z=X\times_S Z$, denote by
% $X^h_Z=X^h_{X_Z}$ the henselisation 
% of $X$ along $X_Z$, and by
% $X_{S-Z}=X\times_S (S-Z)$ the open complement.
%
% Following \cite{DKO:SHISpecZ}, for a 
% given $S\in\bbS$, 
% a closed subscheme $Z\subset S$,
% and a scheme $X\in \SmAff_S$, denote by
% $X_Z=X\times_S Z$ the fibre product with $Z$, by
% $X^h_Z=X^h_{X_Z}$ the henselisation 
% of $X$ along $X_Z$, and by
% $X_{S-Z}=X\times_S (S-Z)$ the open complement.
% Define the subcategory $\SmAff^\fr_{S,Z}$ in $\Aff_S$ spanned by schemes of the form $X^h_Z=X^h_Z$ for $X\in \SmAff_S$.

As in \cite{DKO:SHISpecZ},
we consider the subcategory $\Aff_{S,Z}$ in $\Aff_S$
spanned by the schemes of the form $X^h_Z$ for all $X\in \Aff_S$. 
% Then there is a functor 
% \begin{equation*}\label{eq:Afffunctor_XhZ}
% \Aff_S\to \Aff_{S,Z};X\mapsto X^h_Z.
% \end{equation*}
%
% The schemes of the form $X^h_Z$ for $X\in \Aff_S$
% form the subcategory in $\Aff_S$ denoted by $\Aff_{S,Z}$. 
% The subcategory $\Aff_{S,Z}$ in $\Aff_S$ is 
% spanned by schemes of the form $X^h_Z=X^h_Z$ for $X\in \Aff_S$,
For any subcategory $\mathcal S_S$ in $\Aff_S$,
define $\mathcal S_{S,Z}$ in $\Aff_{S,Z}$ 
as the subcategory in $\Aff_{S,Z}$ spanned by the objects $X^h_Z$, where $X\in\mathcal S_S$.
%\todo{changed}
% Similarly define $\mathcal S_{S,Z}$ for any subcategory $\mathcal S_S$ in $\Aff_S$.
For any $X,Y\in \mathcal S_{S,Z}$, we write $X\times_{S,Z}Y$ for $(X\times_S Y)^h_Z$.

\begin{definition}\label{def:wtaustau}
Let $\mathcal S_\bbS$ be a family of subcategories of $\Aff_\bbS$.% for $\bbS=\Schfns,\Afffns$.

For a topology $\tau$ on $\mathcal S_S$, 
define \emph{the topology $\wtau$} on $\mathcal S_{S,Z}$ 
as the weakest topology such that 
the functor $\mathcal S_S\to \mathcal S_{S,Z}\colon X\mapsto X^h_Z$ is continuous.

For a topology $\tau$ on $\mathcal S_Z$, 
define \emph{the topology $\stau$} on $\mathcal S_{S,Z}$ 
as the strongest topology such that 
the functor $\mathcal S_{S,Z}\to \Sm_Z\colon X\mapsto X_Z$ is continuous.
\end{definition}
%$\widetilde X^h_Z\to X^h_Z$ $\widetilde X\to X$ 
\begin{remark}
Note that the topology $\stau$ above is stronger then $\wtau$,
since $\Sch_S$ the base change functor $\mathcal S_S\to \mathcal S_Z$ is continuous with respect to $\tau$.
\end{remark}
\begin{lemma}\label{lm:stau}
Let $\mathcal S_\bbS$ be $\SmAff_\bbS$ or $\Smat_\bbS$, see \Cref{den:introductionnotation}. %, see \Cref{subsect:SZA1Z}
Let $\tau$ be a topology on $\mathcal S_Z$.
A morphism $\widetilde X\to X$ is a $\stau$-covering 
if and only if
$\widetilde X_Z\to X_Z$ is a $\tau$-covering.
\end{lemma}
\begin{proof}
The property of coverings in $\mathcal S_{S,Z}$ 
described in the lemma 
defines a pretopology $\stau^\prime$ on $\Sch_Z$, 
that defines a topology on $\Aff_Z$,
and moreover, the latter topology restricts to $\Smat_Z$.
%\mathcal S_{S,Z}$ for the given topology $\tau$ on $\mathcal S_{Z}$. 
%
% The property of coverings in $\mathcal S_{S,Z}$ 
% described in the lemma defines a topology on $\mathcal S_{S,Z}$ for the given topology $\tau$ on $\mathcal S_{Z}$. 
%
Any topology on $\mathcal S_{S,Z}$ such that 
the functor $\mathcal S_{S,Z}\to \mathcal S_{Z}$ is continuous is contained in $\stau^\prime$.
Hence $\stau^\prime=\stau$.
%this topology coincides with $\stau$.
\end{proof}
\begin{lemma}\label{lm:niszftfstauwtau}
Let $\bbS=\Afffns$, $\tau=\nuf$ be provided by \Cref{def:nuf}
for a subtopology $\nu$ of the \'etale topology on $\Aff_\bbS$. %\bbS=Aff
%, or $\Sch_{\Sch}$.
%
%and $Z$ be a closed subscheme of $S$.
%Given $\stau$-covering $$ in $\Sch_{S,Z}$, 
Then any $\stau$-covering in $\Aff_{S,Z}$ is a $\wtau$-covering.
\end{lemma}
\begin{proof}
We repeat the argument of \cite[Proposition 4.2]{DKO:SHISpecZ} that covers $\tau=\nu$ being Nisnevich topology. 
%We repeat the argument covering $\zf$ and $\tf$ as well.
Every $\stau$-covering $\wX^h_Z\to X^h_Z$ in $\Aff_{S,Z}$ 
is obtained from a morphism $f\colon \wX\to X$ in $\Sm_S$ 
such that $f_Z\colon \wX_Z\to X_Z$ is a $\tau$-covering in $\Aff_{Z}$.
Since $\wX,X\in \Sm_S$ and $f_Z$ is \'etale, the morphism $f$ is \'etale over $\wX_Z$.
Moreover, 
since $f_Z$ is a $\tau$-covering, it follows that $\wX\amalg X_{S-Z}\to X$ is a $\tau$-covering,
where $X_{S-Z}=X\times_S({S-Z})$,
% .
% Furthermore, if $f_Z$ is a $\tau$-covering, then $\wX\amalg X_{S-Z}$ is a $\tau$-covering
since $\tau$ is the strongest subtopology of the \'etale topology  
that coincides with $\nu$ on $\Aff_z$ for each $z\in S$.
Summarising, 
if $\wX^h_Z\to X^h_Z$ is an $\stau$-covering, 
then ${\wX}^h_Z\amalg (X_{S-Z})^h_Z\to X^h_Z$ is a $\wtau$-covering in $\Aff_{S,Z}$. 
This implies ${\wX}^h_Z\to X^h_Z$ is a $\wtau$-covering in $\Aff_{S,Z}$ since $(X_{S-Z})^h_Z=\emptyset$. 
\end{proof}

\subsection{}
For any category of correspondences $\Corr_S$ on $\EssSmAff_S$, we define 
$\Corr_{S,Z}$ as the subcategory of $\Corr_S$ spanned by the objects of $\EssSmAff_{S,Z}$.

\begin{definition}
A presheaf $F\in\Pre^\tr(\SmAff_{S,Z})$ %\in \Pre(\SmAff_{B,Z})$ or $ 
is called \emph{$\A^1$-invariant}
if $F((\A^1\times X)^h_Z)\simeq F(X^h_Z)$ for all $X\in \SmAff_S$.
\end{definition}
% Similarly, to the case of $\SmAff_S$ the subcategory, 
As mentioned in \cite[\S 4]{DKO:SHISpecZ} and \cite[\S 2.3.4]{DKO:SHISpecZ}
the equality $X^h_Z\times_{S,Z}Y^h_Z=(X\times Y)^h_Z$ defines the monoidal structure on $\Aff_{S,Z}$,
and the subcategory %$\Pre_{\A^1}(\SmAff_{B,Z})$ 
of $\A^1$-invariant presheaves in $\Pre^\tr(\SmAff_{S,Z})$ %\mathcal S
is reflective, %localising
where the localisation functor 
$L_{\A^1}$
is given by
$F\mapsto F((\Delta^\bullet_S\times_S -)^h_Z)$. As in \cite[\S 2.3.4]{DKO:SHISpecZ}
we use the notation $\Delta^\bullet_{S,Z}=(\Delta^\bullet_S)^h_Z$, so $F((\Delta^\bullet_S\times_S -)^h_Z)=F^{\Delta^\bullet_{S,Z}}$.
\begin{remark}
We note that 
$F^{\Delta^\bullet_{S,Z}}$ is not equivalent to the restriction of $F^{\Delta^\bullet_{S}}$ 
along the embedding functor $\EssSmAff_{S,Z}\to \EssSmAff_{S}$, because $(\Delta^\bullet_S)^h_Z\not\simeq \Delta^\bullet_S$.
Precisely, 
$F^{\Delta^\bullet_{S,Z}}(U^h_Z)\not\simeq F^{\Delta^\bullet_{S}}(U^h_Z)$. 

%\begin{remark}$
% In what follows, we deal and discuss 
% %Under the appropriate assumptions, 
% % there are commutativities
% % $\tilde i^!L_{\A^1_{S}}$
% %we have 
% the commutativities
% $(\tilde i^!F)^{\Delta^\bullet_{S,Z}}\simeq \tilde i^!(F^{\Delta^\bullet_{S}})$, 
% $(\tilde i_*F)^{\Delta^\bullet_{S}}\simeq \tilde i_*(F^{\Delta^\bullet_{S,Z}})$, 
% where $\tilde i$ is the morphisms of sites defined 
% the functor $\SmAff_{S}\to\SmAff_{Z}; X\mapsto X^h_Z$,
% and
% $(i_*F)^{\Delta^\bullet_{S,Z}}\simeq i_*(F^{\Delta^\bullet_{Z}})$, 
% where $i$ relates to the functor $\SmAff_{S}\to\SmAff_{Z}; X\mapsto X^h_Z$,
% that  holds under the appropriate assumptions.
% %see \Cref{}, and \Cref{}.
% %The commutativity relations with the inverse and direct image functors are discussed later in the section.
%\end{remark}
\end{remark}
%\mathcal S
% $L_{\A^1}\colon\Pre_{\A^1}(\SmAff_{B,Z})\to\Pre_{\A^1}(\SmAff_{B,Z})$ is defined by \[L_{\A^1}F(X)=F((\Delta^\bullet_S\times_S X)^h_Z),\] 
% similarly for $\Pre^\fr_{\A^1}(\SmAff_{B,Z})$.

%Let $\tau$ be a topology in the category $\Sch_S$ that contains the trivial fibre ($\tf$) topology.

%For an open immersion $U\to V$ in $\SmAff_S$ and a presheaf $F$ on $\SmAff_S$ denote $$

% \begin{definition}\label{def:CorrHU}
% %Given a schemes $S\in\Sch$, a closed subschemes
% Let $S\in \Sch$, and $Z$ be a closed subscheme of $S$.
% A category of correspondences $\Corr_S$ on $\EssSmAff_S$ has the property (HU) if 
% $\Corr(U^h_Z,X^h_Z)\simeq \Corr(U^h_Z,X)$
% for all $U,X\in\EssSmAff_S$.

% A family of $\infty$-categories of correspondences $\Corr_{(-)}$ on $\EssSmAff_{(-)}$ 
% satisfies the property (HU), whenever $\Corr_{S}$ satisfies (HU) for each base scheme $S$ in the family,
% and any closed subscheme $Z$.
% \end{definition}

\begin{lemma}\label{lm:CorrHU}%def:CorrHU
%Given a schemes $S\in\Sch$, a closed subschemes
% Let $\Corr_{(-)}$ be 
% a family of $\infty$-categories of correspondences on $\EssSmAff_{(-)}$ 
% over $\Aff$ 
% that 
% satisfies the property (AHP)(2). 
% Let $S\in \bbS$, and $Z$ be a closed subscheme of $S$.
% Then
There is the isomorphism
$\Corr_S(U^h_Z,X^h_Z)\simeq \Corr_S(U^h_Z,X)$
for any $U,X\in\EssSmAff_S$.
\end{lemma}
\begin{proof}
For any $U^h_Z$-scheme $E$ there is an isomorphism $E^h_{U\times_S Z}\simeq E^h_Z$, where $E$ is considered as $S$-scheme at the right side. So write $E^h_Z$ for $E^h_{U\times_S Z}$. 
Further, there is an isomorphism of $U^h_Z$-schemes
$(X^h_Z\times_S U^h_Z)^h_{Z}\simeq(X\times_S U^h_Z)^h_{Z}$.
Then due to (AHP)(2) 
\begin{equation*}\begin{array}{lcl}
\Corr_S(U^h_Z,X^h_Z)&\simeq&
\Corr_{U^h_Z}(U^h_Z,X^h_Z\times_S U^h_Z)\\&\simeq&
\Corr_{U^h_Z}(U^h_Z,(X\times_S U^h_Z)^h_Z)\\&\simeq& \Corr_{U^h_Z}(U^h_Z,X\times_S U^h_Z)\\
&\simeq&
\Corr_S(U^h_Z,X).
\end{array}\end{equation*}    
\end{proof}

% \todo{added}

\begin{lemma}\label{lm:VectXinSmat}
For any 
% $S\in\Aff$, $Z\hookrightarrow S$, 
$X\in\SmAff_{S,Z}$, 
there is a vector bundle $\widetilde X$ over $X$ 
such that $\widetilde X\in\Smat_{S,Z}$.
%
% Consequently, 
% for any $X$ in $\SmAff_{*}$, there is an $\A^1$-equivalence $\widetilde X\to X$ such that $\widetilde X$ is in $\widetilde X\in\Smat_*$.
\end{lemma}
\begin{proof}
    The claim follows by the argument of \cite[Lemma 5.4]{DKO:SHISpecZ}.
    Indeed, %since $X\in \SmAff_{*}$,
    there is %a vector bundle 
    $\widetilde X$ %over $X$ 
    such that
    $\widetilde X\times_X T_X\simeq {\bf 1}^{N}_X$, for some $N\in\mathbb Z_{\geq 0}$, 
    where $T_X$ and ${\bf 1}^{N}_X$ are tangent and trivial bundles.%
%    $T_X$ is a tangent bundle and ${\bf 1}^{N}_X$ is the trivial bundle of rank $N$
.
\end{proof}
\begin{corollary}\label{lm:PretrASmAffsimeqPretrASmat}
%For any 
%$S\in\Aff$, $Z\hookrightarrow S$,
% t
The adjunction $r\colon \SPreradtrA(\SmAff_{S,Z})\leftrightarrows \SPreradtrA(\Smat_{S,Z})\colon l$, 
where $l\dashv r$, is an equivalence.
%$\PretrA(\SmAff_{S,Z})\simeq \PretrA(\Smat_{S,Z})$.
\end{corollary}
\begin{proof}
    Since the functor $\Corrtr(\Smat_{S,Z})\to \Corrtr(\SmAff_{S,Z})$ is fully faithful,
    the adjunction $l\dashv r$, is a coreflection.
    By \Cref{lm:VectXinSmat}
    for any $X$ in $\SmAff_{{S,Z}}$, there is an $\A^1$-equivalence $\widetilde X\to X$ such that $\widetilde X\in\Smat_{S,Z}$.
    Hence $r$ is conservative, and the claim follows.
\end{proof}
\begin{remark}\label{rem:PretrSmAffsimeqPretrSmat}
The argument of 
\cite[Proposition 5.8]{DKO:SHISpecZ}
proves even %the equivalence %for the categories of presheaves
$\Pretr(\SmAff_{S,Z})\simeq \Pretr(\Smat_{S,Z})$.
\end{remark}
% \todo{end added}

\subsection{}
We proceed with the localisation property for the homotopy categories of sheaves on $\SmAff_{S,Z}$, $\SmAff_{S}$, $\SmAff_{S-Z}$.
%and similarlty for \SmAff
%We start with the following lemma.
%
% Throughout this subsection we consider
% a family of $\infty$-categories of correspondences $\Corr(\EssSmAff_\Aff)$, 
% and a topology $\tau$ on $\EssSmAff_{\Aff}$ 
% such that 
% %$\Corr_{(-)}$ and $\tau$ 
% %are continuous %satisfy (FinE) 
% %in sense of \Cref{def:FinECorr,def:FinProp},
% %and 
% $\Corr_{(-)}$ satisfies (AHP)(2) in sense of \Cref{def:CorrAHP_second}. %Sch
% Then we consider some $S\in \Aff$\todo{because of (AHP)}, and a closed subscheme $Z$ in $S$. 
%\todo{added}
%
%
Throughout this section 
$\tau$ is a topology on $\EssSmAff_{\bbS}$,
and speaking about the topology $\tau$ on $\SmAff_{S,Z}$ we mean the topology $\wtau$.
%

% Recall that according to 
% \Cref{den:introductionnotation}
% $\SPreradtrd(-)=\SPreradd(\Corr(-))$ is the $\infty$-category of pointed radditive presheaves on the respective $\infty$-category of correspondences.

\subsubsection{}\label{sect:Loc_SZSS-Z:istarjsharp}
We prove localisation theorem for functors $\tilde i^*$, $\tilde i_*$, $j^*$, $j_\#$ on unpointed $\infty$-categories.

\begin{lemma}\label{lm:tildei**jsharp*preserveA1loctauloc:zero}

There are well defined adjunctions $\tilde i^*\dashv \tilde i_*$, $j_{\#}\dashv j^*$,
\begin{equation}\label{eq:PreSZSS-Z:istarjsharp}
\SPreradtr(\SmAff_{S,Z})
\rightleftarrows 
\SPreradtr(\SmAff_{S}) 
\rightleftarrows 
\SPreradtr(\SmAff_{S-Z})
\end{equation}
\[\begin{array}{ll}
\tilde i^*(h^\tr(X))=h^\tr(X^h_Z) &
\tilde i_*(F)(X)=F(X^h_Z) \\
j^*(F)(X)=F(X) &
j_{\#}(h^\tr(X))=h^\tr(X)
.\end{array}\]
\end{lemma}
\begin{proof}
The adjunction $\tilde i^*\dashv \tilde i_*$
is given by the inverse and the direct image funcotrs along 
the functor $\Corr(\SmAff_{S})\to \Corr(\SmAff_{S,Z})$; $X\mapsto X^h_Z$.

The adjunction $j_{\#}\dashv j^*$
is given by the inverse and the direct image funcotrs along 
the functor $\Corr(\SmAff_{S-Z})\to \Corr(\SmAff_{S})$; $X\mapsto X$.
\end{proof}

% The functor $\SmAff_{S}\to \SmAff_{S,Z}; X\mapsto X^h_Z$ induces the adjunction $\tilde i^*\dashv \tilde i_*$
% \begin{equation}\label{eq:istarul:SPreradtrSSZ}
% \tilde i^*\colon \SPreradtr(\SmAff_{S}) \rightleftarrows \SPreradtr(\SmAff_{S,Z})\colon \tilde i_*,
% \end{equation}
% such that $\tilde i^*(h^\tr_S(X))=h^\tr_{S,Z}(X^h_Z)$, $\tilde i_*(F)(U)\simeq F(U^h_Z)$.
Since $\tilde i^*$ in \eqref{eq:PreSZSS-Z:istarjsharp} preserves $\tau$-local equivalences, and 
$\tilde i_*$ in \eqref{eq:PreSZSS-Z:istarjsharp} preserves $\tau$-sheaves,
they induce the adjunction 
\[\tilde i^*\colon \Shv_{\Sigma,\wtau}^\tr(\SmAff_{S}) \rightleftarrows \Shv_{\Sigma,\tau}^\tr(\SmAff_{S,Z})\colon \tilde i_*.\]
% that form the adjunction 
% $\tilde i^*\dashv\tilde i_*$ such that $\tilde i^*(h^\tr_S(X))=h^\tr_{S,Z}(X^h_Z)$.

\begin{lemma}\label{lm:ilsXhZ}
Suppose $\tau\supset\tf$.
Let $X\in\SmAff_S$. 
Then 
%correctedaftersubmision
%\todo{:corrected:2052024:words:}
there is a $\tau$-local equivalence
% \[
% \begin{array}{lcl}
% j_# j^*(h^\tr(X))
% \simeq 
% h^\tr(X-X_Z)
% \\
% \tilde i_*\tilde i^*(h^\tr(X))
% &\simeq& 
% \cofib(h^\tr(X-X_Z)\to h^\tr(X)))
% \end{array}\]
\[
\tilde i_*\tilde i^*(h^\tr(X))
\simeq
\cofib(h^\tr(X-X_Z)\to h^\tr(X)))
.\]
% in $\Shv^\tr_{\Sigma,\tau}(\SmAff_S)$.
\end{lemma}
\begin{proof}
For any $U\in\SmAff_{S-Z}$, % such that $U\times_{S}{Z}\cong\emptyset$, 
there is the isomorphism $U^h_Z\cong\emptyset$, 
and consequently, 
\[\tilde i_*\tilde i^* h^\tr(X)(U)\simeq *.\]
Hence the unit 
$h^\tr(X)\to\tilde i_*\tilde i^*(h^\tr(X))$
of the adjunction $\tilde i^*\dashv\tilde i_*$
induces the morphism
\begin{equation}\label{eq:cofhXXZiisulX}\cofib(h^\tr(X-X_Z)\to h^\tr(X)))\to\tilde i_*\tilde i^*(h^\tr(X)),\end{equation}
and moreover, \eqref{eq:cofhXXZiisulX} is an isomorphism on $\SmAff_{S-Z}$.
Let $U\in\SmAff_S$ be such that $U\times_S Z\not\cong\emptyset$, then $h^\tr(X-X_Z)(U^h_Z)\simeq \emptyset,$
and consequently,
\[\begin{array}{lcl}
\cofib(h^\tr(X-X_Z)\to h^\tr(X)))(U^h_Z)&\simeq&
h^\tr(X)(U^h_Z)\\&\simeq&
\tilde i^*(h^\tr(X)) (U^h_Z)\\&\simeq&
\tilde i_*\tilde i^*(h^\tr(X))(U^h_Z).
\end{array}\]
% \[
% \cofib(h^\tr(X-X_Z)\to h^\tr(X)))(U^h_Z) \simeq h^\tr(X)(U^h_Z)
% .\]
So \eqref{eq:cofhXXZiisulX} is an isomorphism on non-empty objects of $\SmAff_{S,Z}$.
% \[\begin{array}
% \cofib(h^\tr(X-X_Z)\to h^\tr(X)))(U)&\simeq&
%
% \tilde i_*\tilde i^*(h^\tr(X))(U)\\&\simeq&
% \tilde i^*(h^\tr(X)) (U^h_Z)\\&\simeq&
% h^\tr(X) (U^h_Z)
% \simeq 
% \end{arra\]
%
% On the other side,
% For any $U\in\SmAff_S$,
% \[h^\tr(X-X_Z)(U^h_Z)\simeq \emptyset,\]
% since 
% either $U^h_Z=\emptyset$, when $U\times_S Z\cong\emptyset$,
% or 
% when $U$ is such that $U\times_S Z\not\cong\emptyset$,
% % For any $U\in\SmAff_S$ such that $U\times_S Z\not\cong\emptyset$,
% % \[h^\tr(X-X_Z)(U^h_Z)\simeq \emptyset,\]
% % and consequently, for any $U$
%
% \[\begin{array}
% \cofib(h^\tr(X-X_Z)\to h^\tr(X)))(U)&\simeq&

% \tilde i_*\tilde i^*(h^\tr(X))(U)\\&\simeq&
% \tilde i_* h^\tr(X^h_Z) (U)\\&\simeq&
% h^\tr(X^h_Z) (U^h_Z)
% \simeq 
% \end{arra\]
% consequently,
% .
Thus since $\tau\supset\tf$, it follows that \eqref{eq:cofhXXZiisulX} 
is a $\tau$-local equivalence.
% Then the unit of the adjunction $\tilde i^*\dashv\tilde i_*$,
% gives the commutative diagram %morphism 
% \begin{equation*}\label{eq:htrtauXiusilshtrtauX}
% \xymatrix{
% h^\trd(X-X_Z)\ar[r]\ar[d]& h^\trd(X)\ar[d] 
% \\
% \phantom{.}*\ar[r] & \tilde i_*\tilde i^*(h^\trd(X)).
% }
% \end{equation*}
% that leads to the morphism
% \begin{equation}\label{eq:cof_S-Z_Xtr->iissXtr}\cofib(h^\trd(X-X_Z)\to h^\trd(X))\to \tilde i_*\tilde i^*(h^\trd(X))\end{equation}
% in $\Shv^\trd_\tau(\SmAff_S)$.
% Denote $G=\tilde i^*(h^\trd(X))$, and denote by $iG$ the left side of \eqref{eq:cof_S-Z_Xtr->iissXtr}.
% Suppose $U_Z=\emptyset$;
% then $iG(U)$ is contractible.
% Then since $\tau\supset\tf$ by the assumption, 
% it follows that $iG(U)\cong iG(U^h_Z)$ for any $U\in\SmAff_S$.
% So, when $U_Z\neq \emptyset$, $iG(U)\cong iG(U^h_Z)\cong h^\trd_\tau(X)(U^h_Z)\cong h^\trd_\tau(X^h_Z)(U^h_Z)$
% by \Cref{lm:CorrHU}.
% Thus $iG(U)\cong \tilde i_*(G)(U)$ for all $U\in\SmAff_S$, and \eqref{eq:cof_S-Z_Xtr->iissXtr} is the isomorphism. 
\end{proof}

\begin{lemma}\label{lm:tildei**jsharp*preserveA1loctauloc}

% (0)
% There are well defined adjunctions 
% \begin{equation}\label{eq:PreSZSS-Z:istarjsharp}
% \SPreradtr(\SmAff_{S,Z})
% \rightleftarrows 
% \SPreradtr(\SmAff_{S}) 
% \rightleftarrows 
% \SPreradtr(\SmAff_{S-Z})
% \end{equation}
% \[\begin{array}{ll}
% \tilde i^*(h^\tr(X))=h^\tr(X^h_Z) &
% \tilde i_*(F)(X)=F(X^h_Z) \\
% j^*(F)(X)=F(X) &
% j_{\#}(h^\tr(X))=h^\tr(X_{S-Z})
% ,\end{array}\]
% $\tilde i^*\dashv \tilde i_*$,
% $j_#\dashv j^*$.

% (1) 
% The functors $\tilde i^*$, $j^*$, $j_#$ in \eqref{eq:PreSZSS-Z:istarjsharp}
% preserve $\tau$-local equivalences in sense of $\wtau$-topology on $\SmAff_{S,Z}$;
% and if $\tau\supset \tf$, $\tilde i_*$ preserves $\tau$-local equivalences, 
% so there are functors 
% \begin{equation}\label{eq:ShvSZSS-Z:istarjsharp}
% \Shv_{\Sigma,\wtau}^\tr(\SmAff_{S,Z}) \rightleftarrows \Shv_{\Sigma,\tau}^\tr(\SmAff_S)\rightleftarrows \Shv_{\Sigma,\tau}^\tr(\SmAff_{S-Z}),
% \end{equation}
% induced by \eqref{eq:PreSZSS-Z:istarjsharp} 
% via the localisation with respect to $\tau$-local equivalences.

% (2) 
% The functors $\tilde i^*$, $j^*$, $j_#$ in \eqref{eq:PreSZSS-Z:istarjsharp}
% preserve $\A^1$-equivalences. If $\tau\supset \tf$,
% then the functor $\tilde i_*$ in \eqref{eq:ShvSZSS-Z:istarjsharp} preserves $\A^1$-equivalence.

Suppose $\tau\supset\tf$.

(0)
%todo:corrected:2052024:superindexes:tr
%todo:corrected:2052024:=:\cong
There are the natural equivalences
\[\begin{array}{ll}
\tilde i^*(h^\tr(X))\simeq h^\tr(X^h_Z) &
% \tilde i_*(h^\tr(X^h_Z))\simeq\cofib(h^\tr(X-X_Z)\to h^\tr(X)) 
\\
j^*(h^\tr(X))\simeq h^\tr(X-X_Z) &
j_{\#}(h^\tr(X))\simeq h^\tr(X)
.\end{array}\]

(1) 
The functors $\tilde i_*$, $\tilde i^*$, $j^*$, $j_{\#}$ in \eqref{eq:PreSZSS-Z:istarjsharp}
preserve $\tau$-local equivalences.
% and there are adjunctions 
% \begin{equation}\label{eq:ShvSZSS-Z:istarjsharp}
% \Shv_{\Sigma,\wtau}^\tr(\SmAff_{S,Z}) \rightleftarrows \Shv_{\Sigma,\tau}^\tr(\SmAff_S)\rightleftarrows \Shv_{\Sigma,\tau}^\tr(\SmAff_{S-Z}),
% \end{equation}
% induced by \eqref{eq:PreSZSS-Z:istarjsharp} 
% via the localisation with respect to $\tau$-local equivalences.

(2) 
The functors 
$\tilde i^*$, $j^*$, $j_{\#}$ 
in \eqref{eq:PreSZSS-Z:istarjsharp}
preserve $\A^1$-equivalences. 
The functor induced by
$\tilde i_*$ 
% in \eqref{eq:PreSZSS-Z:istarjsharp}
on $\Shv^\tr_{\Sigma,\tau}(-)$. %\SmAff_S
preserve $\A^1$-equivalences. 

% There are adjunctions 
% \begin{equation}\label{eq:ShvSZSS-Z:istarjsharp}
% \Shv_{\Sigma,\wtau}^\tr(\SmAff_{S,Z}) \rightleftarrows \Shv_{\Sigma,\tau}^\tr(\SmAff_S)\rightleftarrows \Shv_{\Sigma,\tau}^\tr(\SmAff_{S-Z}),
% \end{equation}
% induced by \eqref{eq:PreSZSS-Z:istarjsharp} 
% via the localisation with respect to $\tau$-local equivalences.

% By \Cref{lm:tildei*!j**preserveA1loctauloc}
Consequently, 
there are adjunctions 
\begin{equation}\label{eq:ShvSZSS-Z:istarjsharp}
\HHtr_{\Sigma,\wtau}(\SmAff_{S,Z}) 
\rightleftarrows 
\HHtr_{\Sigma,\tau}(\SmAff_S)
\rightleftarrows 
\HHtr_{\Sigma,\tau}(\SmAff_{S-Z}),
\end{equation}
induced by \eqref{eq:PreSZSS-Z:istarjsharp} 
via the localisation with respect to $\tau$-local equivalences, and $\A^1$-equivalences.

\end{lemma}
\begin{proof}
Point (0) is provided by \Cref{lm:tildei**jsharp*preserveA1loctauloc:zero,lm:ilsXhZ}.
%todo:corrected:2052024:argument:secondparagraphisinserted
% Points (1) and (2) follows from (0).

Points (1) and (2) for $\tilde i^*$, $j^*$, $j_{\#}$
follows by Point (0). %\eqref{}.

We proceed with Point (1) for $\tilde i_*$.
The functor 
$\tilde i_*$ 
preserves $\tf$-equivalences
because the functor $X\mapsto X^h_Z$
preserves $\tf$-points.
Hence 
$\tilde i_*$ induces functor
$\HHtr_{\Sigma,\wtau}(\SmAff_{S,Z}) 
\rightarrow 
\HHtr_{\Sigma,\tau}(\SmAff_S)$
via the localisation with respect to $\tf$-local equivalences.
The induced functor preserves $\tau$-local equivalences by \Cref{lm:ilsXhZ}.
Since $\tau\supset \tf$, it follows that 
$\tilde i_*$ preserves $\tau$-local equivalences.

Point (2) for $\tilde i_*$ 
follows from %Point (2) for 
% $\tilde i^*$, $j^*$, $j_{\#}$ and
\Cref{lm:ilsXhZ}.
\end{proof}

% By \Cref{lm:tildei*!j**preserveA1loctauloc}
% there are adjunctions 
% \begin{equation}\label{eq:ShvSZSS-Z:istarjsharp}
% \HHtr_{\Sigma,\wtau}(\SmAff_{S,Z}) 
% \rightleftarrows 
% \HHtr_{\Sigma,\tau}(\SmAff_S)
% \rightleftarrows 
% \HHtr_{\Sigma,\tau}(\SmAff_{S-Z}),
% \end{equation}
% induced by \eqref{eq:PreSZSS-Z:istarjsharp} 
% via the localisation with respect to $\tau$-local equivalences, and $\A^1$-equivalences.

\begin{proposition}\label{prop:HtsSmBcZSmBSmU:istartjsharp}
% Consider the pair of adjunctions $\tilde i_*\dashv \tilde i^!$, $j^*\dashv j_*$, 
% \[
% \HH_{\Sigma,\wtau}^\trd(\SmAff_{S,Z}) \rightleftarrows \HH_{\Sigma,\tau}^\trd(\SmAff_S)\rightleftarrows \HH_{\Sigma,\tau}^\trd(\SmAff_{S-Z})
% ,\]
% provided by \Cref{lm:tildei*!j**preserveA1loctauloc,lm:adjunctionilsiush}.
For any $F\in \HHtr_{\Sigma,\tau}(\SmAff_S)$,
the square
\begin{equation*}\label{eq:SZlocproppbsq_sharp}\xymatrix{
\tilde i_*\tilde i^*F& F\ar[l]\\
{*}\ar[u]\ar[r] & j_\#j^*F\ar[u]\ar[l]
}\end{equation*}
is a pushout.
\end{proposition}
\begin{proof}
The claim follows by \Cref{lm:ilsXhZ,lm:tildei**jsharp*preserveA1loctauloc}.
% For any $F\in \Shv^\trd_{\Sigma,\tau}(\SmAff_S)$, 
% \[\tilde i_*\tilde i^!F \simeq 
% \mathrm{fib}(F(X^h_Z)\to F(X^h_Z-X_Z))\simeq 
% \mathrm{fib}(F(X)\to F(X-X_Z))\simeq 
% \mathrm{fib}(F(X)\to j_*j^*F(X)).\]
% Hence the corresponding square is pullback in $\Shv^\trd_{\Sigma,\tau}(\SmAff_S)$. 
% The claim for $\HH^\trd_{\Sigma,\tau}(\SmAff_S)$ follows by \Cref{lm:tildei*!j**preserveA1loctauloc}.
\end{proof}

\subsubsection{}
We prove localisation theorem for functors $\tilde i^!$, $\tilde i_*$, $j^*$, $j_*$ on pointed $\infty$-categories.

Recall that according to 
\Cref{den:introductionnotation}
$\SPreradtrd(-)=\SPreradd(\Corr(-))$ is the $\infty$-category of pointed radditive presheaves on the respective $\infty$-category of correspondences.
Then
\Cref{lm:ilsXhZ} holds for the $\infty$-category of pointed radditive $\tau$-sheaves $\Shv^\trd_{\Sigma,\tau}(\SmAff_S)$ as well.

\begin{lemma}\label{lm:tildei*!j**preserveA1loctauloc}
%\todo{Affine $S$ . notation in the diagram} 

% Let $\Corr_{(-)}$ be a family of $\infty$-categories of correspondences on $\EssSmAff_{(-)}$ over $\Aff_S$, 
% and $\tau$ be a topology on $\EssSmAff_S$ 
% such that $\Corr_{(-)}$ and $\tau$ satisfy (FinE) in sense of \Cref{def:FinECorr,def:FinProp},
% and $\Corr_{(-)}$ satisfies (AHP)(2) in sense of \Cref{def:CorrAHP_second}. %Sch
% Consider any $S\in \Aff$, and a closed subscheme $Z$ in $S$. 
(0)
% For any $S\in Sch$, a closed subscheme $Z$ in $S$, 
% an $\infty$-category of correspondences $\Corr_S$ on $\EssSmAff_S$, 
% and a topology $\tau$ on $\EssSmAff_S$, %Sch
There are well defined functors 
%consider the functors
\begin{equation}\label{eq:PreSZSS-Z}\SPretrd(\SmAff_{S,Z})\rightleftarrows \SPretrd(\SmAff_{S}) \rightleftarrows \SPretrd(\SmAff_{S-Z})\end{equation}
\[\begin{array}{ll}\tilde i^!(F)(X^h_Z)=\fib (F(X^h_Z)\to F(X^h_Z-X_Z)) &
\tilde i_*(F)(X)=F(X^h_Z) \\
j^*(F)(X)=F(X) &
j_*(F)(X)=F(X_{S-Z})
\end{array}\]
% \begin{equation}\label{eq:PreSZSS-Z}\Pre^\tr(\SmAff_{S,Z})\rightleftarrows \Pre^\tr(\SmAff_{S}) \rightleftarrows \Pre^\tr_(\SmAff_{S-Z})\end{equation}
% \[\begin{array}{ll}\tilde i^!(F)(X^h_Z)=\fib (F(X^h_Z)\to F(X^h_Z-X_Z)) &
% \tilde i_*(F)(X)=F(X^h_Z) \\
% j^*(F)(X)=F(X) &
% j_*(F)(X)=F(X_{S-Z})
% .\end{array}\]
that preserve the subcategories $\SPreradtrd(-)$.

(1) 
All the functors $\tilde i^!$, $\tilde i_*$, $j^*$, $j_*$
in \eqref{eq:PreSZSS-Z}
preserve 
$\tau$-sheaves; %in sense of $\wtau$-topology on $\SmAff_{S,Z}$;
so there are %the adjunctions of 
functors %$\SmAff_{S,Z}$
\begin{equation}\label{eq:ShvSZSS-Z}
\Shv_{\Sigma,\wtau}^\trd(\SmAff_{S,Z}) \rightleftarrows \Shv_{\Sigma,\tau}^\trd(\SmAff_S)\rightleftarrows \Shv_{\Sigma,\tau}^\trd(\SmAff_{S-Z}),
\end{equation}
%todo \Sh \Shv
induced by \eqref{eq:PreSZSS-Z} by the restriction to the subcategories of sheaves.
%If $\tau\supset\tf$, the latter functors from the adjunctions $\tilde{i}_*\dashv\tilde{i}^!$, $j^*\dashv j_*$.

(2) 
%If, in addition, $\tau\supset \tf$,
%then 
The functors $\tilde i_*$, $j^*$, $j_*$ 
in \eqref{eq:PreSZSS-Z}
%todo \Sh \Shv
% induced by \eqref{eq:PreSZSS-Z} by the restriction to the subcategories of sheaves
% $\Shv_{\wtau}(\SmAff_{S,Z})$, $\Sh_\tau(\SmAff_S)$, $\Sh_{\tau}(\SmAff_{S-Z})$
preserve $\A^1$-invariant presheaves. %objects
If $\tau\supset \tf$,
then the functor $\tilde i^!$ in \eqref{eq:ShvSZSS-Z} preserves $\A^1$-invariant shaves.
% , %objects
% %inducing by the restriction in its turn the adjunctions
% and there are adjunctions
% \[
% \HH_\wtau^\tr(\SmAff_{S,Z}) \rightleftarrows \HH_\tau^\tr(\SmAff_S)\rightleftarrows \HH_{\tau}^\tr(\SmAff_{S-Z})
% \]
% induced by \eqref{eq:PreSZSS-Z} by the restriction.
%
% \[
% \Shv_{\wtau,\A^1}^\tr(\SmAff_{S,Z}) \rightleftarrows \Shv_{\tau,\A^1}^\tr(\SmAff_S)\rightleftarrows \Shv_{\tau,\A^1}^\tr(\SmAff_{S-Z})
% .\]
%(preserve $\A^1$-equivalences since commute with $L_{\A^1}$)
\end{lemma}
\begin{proof}
Functionalities of the scheme operations in the definition
imply that the formulas in point (0) define functors $\tilde{i}_*$, $\tilde{i}^!$, $j^*$, $j_*$.
Further, the claims (1) and (2) for $j^*$ and $j_*$ are well known.

We are going to prove (1) for $\tilde i^!$.
% that %To prove that 
% $\tilde i^!$ preserves $\tau$-sheaves. 
Let %consider 
$F\in \Shv_{\Sigma,\tau}^\trd(\SmAff_S)$. 
By definition the topology $\wtau$ %$\widetilde{X^h_Z}\to X^h_Z$ 
is generated by coverings in the category $\SmAff_{S,Z}$ of the form $\widetilde{X}^h_Z\to X^h_Z$
for a $\tau$-covering $\widetilde{X}\to X$ in $\SmAff_S$. 
Since for any $\tau$-covering $\widetilde{X}\to X$,
$\widetilde X\to X$ and $\widetilde X_{S-Z}\to X_{S-Z}$ are $\tau$-coverings,
it follows that $i^!(F)$ is $\wtau$-sheaf.
%todo
Thus
% that %To prove that 
$\tilde i^!$ preserves $\tau$-sheaves. 

% Since $\tf$-topology is contained in $\tau$, it follows that 
% for any \'etale affine neighbourhood $\widetilde X^\prime\to \widetilde X$ of $\widetilde X_Z$ the morphism 
% $\widetilde X^\prime\amalg \widetilde X_{S-Z}\to \widetilde X$ is a $\tau$-covering, and hence
% $R_{}$
% \[\Pre^\fr( R_{\widetilde X\to X}, 
% \fib(F(\widetilde X)\to F(\widetilde X\times_{S-Z}), \fib(F(\)\]

We proceed with (2) for $\tilde i^!$.
If $F$ is $\A^1$-invariant, and $\tf\subset \tau$, then for any $X\in \SmAff_S$
\[\begin{array}{lclc}
\fib(F((\A^1\times X)^h_Z)&\to& F((\A^1\times X)^h_Z\times_S (S-Z))&\simeq\\ 
\fib(F(\A^1\times X)&\to& F((\A^1\times X)\times_S (S-Z))&\simeq \\
\fib(F(X)&\to& F(X\times_S (S-Z))&\simeq\\
\fib(F(X^h_Z)&\to& F(X^h_Z\times_S (S-Z)).
\end{array}\]
Thus $i^!F$ is $\A^1$-invariant.

%We proceed with 
To prove that claims (1) and (2) for $\tilde i_*$ 
consider $F\in \Shv^\trd_{\Sigma,\wtau}(\SmAff_{S,Z})$. 
The presheaf $\tilde i_*(F)\in \SPreradtrd(\SmAff_S)$ is a $\tau$-sheaf, % on $\SmAff_S$ 
because for any $\tau$-covering $\widetilde X\to X$ in $\SmAff_S$ the morphism $\widetilde X^h_Z\to X^h_Z$ is a $\tau$-covering in $\SmAff_{S,Z}$.
If $F$ is $\A^1$-invariant, then for any $X\in \SmAff_S$,
$\tilde i_*F(\A^1\times X)=F((\A^1\times X)^h_Z)\simeq F(X^h_Z)=\tilde i_*F(X)$.
%
% To complete the proof we have to show that there is an adjunction $\tilde{i}_*\dashv\tilde{i}^!$ in \eqref{eq:ShvSZSS-Z}, when $\tau\supset\tf$.
% Let $G\in\Shv_\tau^\tr(\SmAff_{S,Z})$ be the sheaf associated with the presheaf represented by $X^h_Z\in\SmAff_{S,Z}$, where $X\in\SmAff_S$.
% \todo{do we need that $\tau$ is subcanonical or something like that}
% Denote by $iG = \cofib(h^\tr_S(X-X_Z)\to h^\tr(X))\in \Shv_\tau^\tr(\SmAff_{S})$.
% Then $iG(U)$ is contractible for any $U$ such that $U_Z=\emptyset$, 
% and 
% since $iG$ is a $\tau$-sheaf by the definition, and $\tau\supset\tf$ by the assumption, 
% it follows that $iG(U)\cong iG(U^h_Z)$ for any $U\in\SmAff_S$.
% Further, when $U_Z\neq \emptyset$, $iG(U)\cong iG(U^h_Z)\cong h^\tr_S(X)(U^h_Z)\cong h^\tr_S(X^h_Z)(U^h_Z)$.
% Summarising we conclude using (FinE) for $\Corr_{(-)}$ that for any $U\in \SmAff_S$, 
% \[
% iG(U)\cong \begin{cases}
% h^\tr_S(X^h_Z)(U^h_Z)\cong G(U^h_Z) , \quad U_Z\neq\emptyset,\\
% *\cong G(U^h_Z), \quad U_Z=\emptyset
% \end{cases}
% .\]
% Thus $\tilde i_*(G)\cong iG$. 
% Since $\tau\supset\tf$, it follows that
% $\tilde i^!F(X^h_Z)\cong \fib(F(X)\to F(X-X_Z))\cong \Map(iG,F)$.
% for any $F\in \Shv_\tau(\SmAff_S)$,
% and there are the isomorphisms
% \[\Map(\tilde i_* G,F)\cong \tilde i^!F(X^h_Z) \cong \Map(G,\tilde i^!F).\]
% Since representable objects
% generate 
% the category $\Shv_\tau(\SmAff_S)$, we get
% the adjunction.
%
% by the sequence of isomorphisms
% \[
% \Map(\tilde{i}(F),G)\simeq
% \Map(\tilde{i}(F),G)
% \]
\end{proof}

\begin{lemma}\label{lm:adjunctionilsiush}
If $\tau\supset\tf$, the functors \eqref{eq:ShvSZSS-Z} 
from the adjunctions $\tilde{i}_*\dashv\tilde{i}^!$, $j^*\dashv j_*$,
and consequently, 
there are the adjunctions
\[
\HH_{\Sigma,\wtau}^\trd(\SmAff_{S,Z}) \rightleftarrows \HH_{\Sigma,\tau}^\trd(\SmAff_S)\rightleftarrows \HH_{\Sigma,\tau}^\trd(\SmAff_{S-Z})
\]
induced by \eqref{eq:PreSZSS-Z} and \eqref{eq:ShvSZSS-Z} by the restriction.
\end{lemma}
\begin{proof}
%To complete the proof we have to show that there is an adjunction $\tilde{i}_*\dashv\tilde{i}^!$ in \eqref{eq:ShvSZSS-Z}, when $\tau\supset\tf$.
Let $G\in\Shv_{\Sigma,\tau}^\trd(\SmAff_{S,Z})$ be the sheaf associated with the presheaf represented by $X^h_Z\in\SmAff_{S,Z}$, where $X\in\SmAff_S$.
% \todo{do we need that $\tau$ is subcanonical or something like that}
Denote %\todo{by}
$iG = \cofib(h^\trd(X-X_Z)\to h^\trd(X))\in \Shv_{\Sigma,\tau}^\trd(\SmAff_{S})$.
% Then $iG(U)$ is contractible for any $U$ such that $U_Z=\emptyset$, 
% and 
% since $iG$ is a $\tau$-sheaf by the definition, and $\tau\supset\tf$ by the assumption, 
% it follows that $iG(U)\cong iG(U^h_Z)$ for any $U\in\SmAff_S$.
% Further, when $U_Z\neq \emptyset$, $iG(U)\cong iG(U^h_Z)\cong h^\tr_S(X)(U^h_Z)\cong h^\tr_S(X^h_Z)(U^h_Z)$.
% Summarising we conclude using (FinE) for $\Corr_{(-)}$ that for any $U\in \SmAff_S$, 
% \[
% iG(U)\cong \begin{cases}
% h^\tr_S(X^h_Z)(U^h_Z)\cong G(U^h_Z) , \quad U_Z\neq\emptyset,\\
% *\cong G(U^h_Z), \quad U_Z=\emptyset
% \end{cases}
% .\]
By \Cref{lm:ilsXhZ} 
$\tilde i_*(G)\cong iG$. 
Since $\tau\supset\tf$, it follows that
$\tilde i^!F(X^h_Z)\cong \fib(F(X)\to F(X-X_Z))\cong \Map(iG,F)$
for any $F\in \Shv_{\Sigma,\tau}(\SmAff_S)$,
and there are the isomorphisms
\[\Map(\tilde i_* G,F)\cong \tilde i^!F(X^h_Z) \cong \Map(G,\tilde i^!F).\]
Since representable objects
generate 
the category $\Shv_{\Sigma,\tau}(\SmAff_S)$, we get
the adjunction.
\end{proof}
%\todo{new text : end}

\begin{proposition}\label{prop:HtsSmBcZSmBSmU}
% Given a base scheme $S$ and a topology $\tau$ on $\Aff_S$ such that $\tf\subset\tau$, 
% Given $\Corr_{(-)}$ like in \Cref{lm:tildei*!j**preserveA1loctauloc},
% a scheme $S\in\Sch$, a closed subscheme $Z$, and a topology $\tau$ on $\EssSmAff_S$ such that $\tf\subset\tau$,
Consider the pair of adjunctions $\tilde i_*\dashv \tilde i^!$, $j^*\dashv j_*$, 
% \[
% \Prefr_{\wtau,\A^1}(\SmAff_{S,Z})\rightleftarrows \Prefr_{\tau,\A^1}(\SmAff_{S}) \rightleftarrows \Prefr_{\tau,\A^1}(\SmAff_{S-Z})
% \]
\[
\HH_{\Sigma,\wtau}^\trd(\SmAff_{S,Z}) \rightleftarrows \HH_{\Sigma,\tau}^\trd(\SmAff_S)\rightleftarrows \HH_{\Sigma,\tau}^\trd(\SmAff_{S-Z})
,\]
% \[\Hfr_{\wtau,\A^1}(\SmAff_{S,Z})\rightleftarrows \Hfr_{\tau,\A^1}(\SmAff_{S}) \rightleftarrows \Hfr_{\tau,\A^1}(\SmAff_{S-Z})\]
provided by \Cref{lm:tildei*!j**preserveA1loctauloc,lm:adjunctionilsiush}.
For any $F\in \HH^\trd_{\Sigma,\tau}(\SmAff_S)$,
%\Prefr_{\tau,\A^1}
the square
\begin{equation*}\label{eq:SZlocproppbsq}\xymatrix{
\tilde i_*\tilde i^!F\ar[d]\ar[r]& F\ar[d]\\
{*}\ar[r] & j_*j^*F
}\end{equation*}
is a pullback.
\end{proposition}
\begin{proof}
For any $F\in \Shv^\trd_{\Sigma,\tau}(\SmAff_S)$, 
\[\tilde i_*\tilde i^!F \simeq 
\mathrm{fib}(F(X^h_Z)\to F(X^h_Z-X_Z))\simeq 
\mathrm{fib}(F(X)\to F(X-X_Z))\simeq 
\mathrm{fib}(F(X)\to j_*j^*F(X)).\]
Hence the corresponding square is pullback in $\Shv^\trd_{\Sigma,\tau}(\SmAff_S)$. 
The claim for $\HH^\trd_{\Sigma,\tau}(\SmAff_S)$ follows by \Cref{lm:tildei*!j**preserveA1loctauloc}.
% 
% Taking sections on a scheme $X\in \Sm_S$ in the square
% we get the sequre 
% \[\xymatrix{
% F(X^h_Z/(X^h_Z-X_Z)))& F(X)\\
% * & F(X\times(S-Z))
% }\]
% where $X_Z=X\times_S Z$, and $F(X^h_Z/(X^h_Z-X_Z)))
% %\simeq 
% $ stands for $\mathrm{fib}(F(X^h_Z)\to F(X^h_Z-X_Z))$
\end{proof}

% $\Corr^\A1\fr_{S,Z}(X_{Z}, E_Z)\times_{ \Corr^\A1\fr_{S,Z}(W_{Z}, E_Z) }\Corr^\A1\fr_{S,Z}(W^h_{Z}, E^h_Z)
% \simeq \Corr^\A1\fr_{S,Z}(X_{Z}\cup W^h_{Z}, E^h_Z)\to \Corr^\A1\fr_{S,Z}(X^h_Z, E^h_Z)$

\subsection{}\label{subsect:SZA1Z}

Throughout this subsection \begin{itemize}
\item 
$\mathcal S_\bbS$ is $\Smat_\bbS$ or $\SmAff_\bbS$,
and 
\item
$\mathcal S_*$ denotes a category from the list: $\mathcal S_S$, $\mathcal S_Z$, $\mathcal S_{S,Z}$.
\end{itemize}

\begin{lemma}\label{cor:LA1trivfibhenspairsmoothLRwAHP}
%Under the assumptions of \Cref{cor:DeltatrivfibhenspairsmoothLRwAHP} 
Suppose a discrete presheaf of sets $c$ on $\Aff_S$
satisfies lifting property with respect to affine henselian pairs in sense of \Cref{def:RLwAHP}, 
and satisfies closed gluing on $\Aff_S$.
Denote by $c^h_Z$ and $c_Z$ the restrictions of $c$ to the categories $\SmAff_{B,Z}$ and $\SmAff_Z$.

Let $U\in \AffSm_{S}$, and $Y\subset U$ be a closed subscheme.
Then the morphism of simplicial sets
%\[c((\Delta^\bullet_S\times_S U)^h_{Z})\to c((\Delta^\bullet_S\times_S U)_Z).\]
\[ L_{\A^1}c^h_Z(U^h_{Z})\to L_{\A^1}c_Z(U_Z)\]
is a trivial fibration.
% More precisely, the latter morphismre is the trivial fibration
% \[c((\Delta^\bullet\times_S U)^h_{Z})\to c(Y).\]
\end{lemma}
\begin{proof}
By the construction of $L_{\A^1}$ on $\Pre_\Sigma(\SmAff_{S,Z})$ and $\Pre_\Sigma(\SmAff_{Z})$, see \Cref{subsect:SmSZ},
there are weak equivalences of simplicial sets
$L_{\A^1}c^h_Z(U^h_{Z})\simeq c((\Delta^\bullet_S\times_S U)^h_Z)$,
$L_{\A^1}c_Z(U_{Z})\simeq c(\Delta^\bullet_Z\times_Z U)$.
Since
$(\Delta^\bullet_S\times_S U)^h_Z\cong (\Delta^\bullet_U)^h_Z\cong (\Delta^\bullet_U)^h_Y$,
$\Delta^\bullet_Z\times_S U\cong \Delta^\bullet_Y$,
where $Y = U\times_S Z$,
the claim follows
by \Cref{cor:DeltatrivfibhenspairsmoothLRwAHP}.
\end{proof}

\begin{corollary}\label{lm:LA1RLwAHPhenspairweakeq}
For any 
$E\in \SmAff_S$, 
$U\in \SmAff_S$,
and closed subscheme $Z\subset S$,
the morphism 
$L_{\A^1}h^\tr(E)(U^h_Z)\to L_{\A^1}h^\tr(E)(U_Z)$ 
is a weak equivalence. 
\end{corollary}
\begin{proof}
The claim follows by \Cref{def:CorrAHP_first} and \Cref{cor:LA1trivfibhenspairsmoothLRwAHP}.
% By \cite[Corollary 2.3.25]{ehksy} there is an equivalence 
% $L_{\A^1}h^\fr(E)\simeq L_{\A^1}\Fr(-,E)$ on the category $\SmAff_S$.
% The then claim follows by \Cref{cor:LA1trivfibhenspairsmoothLRwAHP} and \eqref{ex:LRwAHP}(2).
% Similar argument holds with the use \Cref{ex:LRwAHP}(3),(4), applying \cite[Proposition 3]{SmModelSpectrumTP} for $h^{\pfr}(E)$. 
%
% By \cite[Corollary 2.3.25]{ehksy} and by \cite[Proposition 3]{SmModelSpectrumTP} there are equivalences 
% $L_{\A^1}h^\tfr(E)\simeq L_{\A^1}h^\nfr(E)\simeq L_{\A^1}\Fr(E,-)\simeq L_{\A^1}h^{\pfr}(E)$ on the category $\SmAff_S$.
% So summarising $L_{\A^1}h^\tfr(E)\simeq L_{\A^1}h^\fr(E)$ on the category $\SmAff_S$, for any $h^\fr\in\{\Fr,h^\nfr,h^\pfr\}$.
% The then claim follows by \Cref{cor:LA1trivfibhenspairsmoothLRwAHP}.
\end{proof}

\begin{definition}\label{def:CorrA}
%todo notation h_X h_\widetile X h(X)
% Let $\mathcal S_*$ denote a category from the list: 
% $\Sch_S$, $\Sm_S$, and $\Aff_{S,Z}$, $\SmAff_{S,Z}$, $\Smat_{S,Z}$.
% Let $\mathcal S_*$ denote $\mathcal S_S$, or $\mathcal S_Z$, or $\mathcal S_{S,Z}$.
Define the $\infty$-category 
$\CorrAtr(\mathcal S_*)$ 
as the full subcategory in
$\SPreradtrA(\mathcal S_*)$ spanned by the objects of the form 
$L_{\A^1}h^\tr(X)$, $X\in \mathcal S_*$.
Denote by 
\begin{equation}\label{eq:lCorrfrCorrAfr}l\colon \Corr(\mathcal S_*)\to \Corr^{\A}(\mathcal S_*)\end{equation}
the functor $X \mapsto L_{\A^1}h^\tr(X)$. We write $X\in \Corr^{\A}(\mathcal S_*)$, where $X$ denotes $L_{\A^1}h^\tr(X)$, so we may write $l(X)\cong X$.
% defined by the functor
% \[v\colon \Corr(\mathcal S_*)\to \PretrA(\mathcal S_*); X \mapsto L_{\A^1}h^\tr(X).\]
Denote by $h^{\A\tr}(X)= \CorrAtr(-,X)$ the representable presheaves.
\end{definition}
\begin{remark}
The representable presheaves $h^{\A\tr}(X)$ being restricted to $\mathcal S_*$ 
are equivalent to the presheaves
%\[
$h^\tr(X)^{\Delta^\bullet_*}\simeq \Corr(-\times\Delta^\bullet_*,X).$
%\]
\end{remark}

% \begin{definition}
% For any category $\mathcal S_*$ from the list: 
% $\Sch_S$, $\Sm_S$, and $\Aff_{S,Z}$, $\SmAff_{S,Z}$, $\Smat_{S,Z}$,
% define the $\infty$-category $\Corr^{\A\tr}(\mathcal S_*)$ as follows.

% Let 
% \[v\colon \Corr^\fr(\mathcal S_*)\to \Pre^\fr_{\A^1}(\mathcal S_*) \]
% be the functor of $\infty$-categories
% given by $X\mapsto L_{\A^1}h^\fr(X)$. 
% Consider a model for the functor $v$ in the category of simplicially enriched categories, 
% and define the simplicially enriched category $\Corr^{\A\tr}(\mathcal S_*)$ with the same objects as in 
% $\Corr^\fr(\mathcal S_*)$ and hom-spaces given by $\Corr^{\A\tr}(X_1,X_2)=\Pre^\fr_{\A^1}(v(X_1),v(X_2))$.
% Then there is the canonical functor of simplicially enriched categories 
% \begin{equation}\label{eq:lCorrfrCorrAfr}l\colon \Corr^\fr(\mathcal S_*)\to \Corr^{\A\tr}(\mathcal S_*).\end{equation}
% Finally we consider %$\Corr^\fr$ 
% the $\infty$-category and the functor associated 
% with $\Corr^{\A\tr}(\mathcal S_*)$ 
% and the functor $l$.
% \end{definition}
% % \begin{remark}
% % There is the canonical functor 
% % $\Corr^\fr(\mathcal S_*)\to \Corr^{\A\tr}(\mathcal S_*)$.
% While the functor \eqref{eq:lCorrfrCorrAfr} is a kind of $\A^1$-localisation functor,
% we do not formulate, and do not prove this, 
% using the following lemma instead.
% % \end{remark}

\begin{lemma}\label{lm:HA1CorrfrCorrAfr}
% Let $S$ be a scheme, $Z$ be a closed subscheme, 
% $\Corr_S$ be an $\infty$-category of correspondences on $\Sch_S$ that representable presheaves satisfy (AHP)(1) and the closed gluing on affine schemes.
The adjunction of $\infty$-categories
\begin{equation}\label{eq:ls*PtrPAtrlu*}
l^*\colon \SPretrrad(\mathcal S_{S,Z})\rightleftarrows  \SPreradAtr(\mathcal S_{S,Z})\colon l_*
\end{equation}
%\todo{$\Sigma$ not necessary}
restricts to the  equivalence
\[\SPreradtrA(\mathcal S_{S,Z})\simeq \SPreradAtr(\mathcal S_{S,Z}).\]
% where $\mathcal S_{S,Z}$ is $\SmAff_{S,Z}$ or $\Smat_{S,Z}$.
% \[\PreA^\tr(\mathcal S_{S,Z})\simeq \Pre^{\A\tr}(\mathcal S_{S,Z})\colon l_*%
% Let $S$ be a scheme, $Z$ be a closed subscheme, $\Corr_S$ be an $\infty$-category of correspondences on $\Sch_S$ that representable presheaves satisfy (RLwAHP) and the closed gluing on affine schemes.
% The functor of $\infty$-categories
% \begin{equation}\label{eq:PtrPAtrl*}\Pre^\tr(\mathcal S_{S,Z})\leftarrow \Pre^{\A\tr}(\mathcal S_{S,Z})\colon l_*
% \end{equation}
% preserves $\A^1$-invariant objects and induces the  equivalence
% \[\PreA^\tr(\mathcal S_{S,Z})\leftarrow \Pre^{\A\tr}(\mathcal S_{S,Z})\colon l_*%
% \[\PreA^\tr(\mathcal S_{S,Z})\leftarrow \PreA^{\A\tr}(\mathcal S_{S,Z})\colon l_*%
% $l^*\colon \HH_{\A^1}(\Corr^\fr(\SmAff_{S,Z}))\rightleftarrows \HH_{\A^1}(\Corr^{\A\tr}(\SmAff_{S,Z}))\colon l_*$
%is an equivalence
% ,\]
% where $\mathcal S_{S,Z}$ is $\SmAff_{S,Z}$ or $\Smat_{S,Z}$.
\end{lemma}
\begin{proof}
Since the $\A^1$-localisation functors on 
$\SPreradtr(\mathcal S_{S,Z})$ and $\SPreradAtr(\mathcal S_{S,Z})$
are both given by $L_{\A^1}\cong(-)^{\Delta_{S,Z}^\bullet}$,
and
the functor $l$ commutes with the functors $-\times\Delta^n_{S,Z}$, i.e.
\begin{equation}\label{eq:LAcomutetrAtrl}l_*L_{\A^1}\simeq L_{\A^1}l_*\colon \SPreradAtr(\mathcal S_{S,Z})\to \SPreradtr(\mathcal S_{S,Z}).\end{equation}

Further, by the definition $l_*h^{\A\tr}(E)\simeq L_{\A^1}h^\tr(E)$, and $l^* h^\tr(E)\simeq h^{\A\tr}(E)$.
Hence 
\begin{equation}\label{eq:llhtrLa1htr}l_*l^*h^\tr(E)\simeq L_{\A^1}h^\tr(E)\in \SPreradtr(\mathcal S_{S,Z}),\end{equation}
and 
\begin{equation}\label{eq:usPtrPAtr}L_{\A_1}l_*h^{\A\tr}(E)\simeq L_{\A^1}h^\tr(E)\simeq l_*h^{\A\tr}(E).\end{equation}
So by \ref{eq:llhtrLa1htr} the unit of the adjunction \eqref{eq:ls*PtrPAtrlu*}
% \begin{equation}\label{eq:l*trAtr}l^*\colon \Pre^\tr(\mathcal S_{S,Z})\rightleftarrows \Pre^{\A\tr}(\mathcal S_{S,Z})\colon l_*\end{equation}
is equivalent to the endofunctor $L_{\A^1}$ on $\SPreradtr\mathcal S_{S,Z})$.
Since the functor $\SPreradAtr(\mathcal S_{S,Z})\to \SPrerad(\mathcal S_{S,Z})$
is conservative, $l_*$ in \eqref{eq:ls*PtrPAtrlu*}
is conservative,
and hence \eqref{eq:usPtrPAtr} implies that the counit 
is equivalent to the identity endofunctor on $\SPreradAtr(\mathcal S_{S,Z})$.
Thus \eqref{eq:ls*PtrPAtrlu*} is a reflection, and 
induces the equivalence of $\SPreradtrA(\mathcal S_{S,Z})$ and $\Pre^{\A\tr}_\Sigma(\mathcal S_{S,Z})$.

% Since the $\A^1$-localisation functors on 
% $\Pre^\tr(\mathcal S_{S,Z})$ and $\Pre^{\A\tr}(\mathcal S_{S,Z})$
% both are given by $L_{\A^1}\cong(-)^{\Delta_{S,Z}^\bullet}$,
% and
% the functor $l$ commutes with the functors $-\times\Delta^n_{S,Z}$,
% \[l_*L_{\A^1}\simeq L_{\A^1}l_*\colon \Pre^{\A\tr}(\mathcal S_{S,Z})\to \Pre^\tr(\mathcal S_{S,Z}),\] 
% and hence $l_*$ preserves $\A^1$-invariant objects.

% Further, by the definition $l_*h^{\A\tr}(E)\simeq L_{\A^1}h^\tr(E)$, and $l^* h^\tr(E)\simeq h^{\A\tr}(E)$.
% Hence 
% \begin{equation}\label{eq:llhtrLa1htr}l_*l^*h^\tr(E)\simeq L_{\A^1}h^\tr(E)\colon \Pre^\tr(\mathcal S_{S,Z})\to \Pre^\tr(\mathcal S_{S,Z}).\end{equation}

% The adjunction
% \begin{equation}\label{eq:l*trAtr}l^*\colon \Pre^\tr(\mathcal S_{S,Z})\rightleftarrows \Pre^{\A\tr}(\mathcal S_{S,Z})\colon l_*\end{equation}
% induces the adjunction
% \[L_{\A^1}l^*\colon \Pre^\tr(\mathcal S_{S,Z})\rightleftarrows \Pre^{\A\tr}(\mathcal S_{S,Z})\colon l_*,\]
% that restricts in its turn to the adjunction
% \begin{equation}\label{eq:PA1trPA1Atr}\Pre^{\A\tr}(\mathcal S_{S,Z})\rightleftarrows \Pre^{\A\tr}(\mathcal S_{S,Z})\colon l_*,\end{equation}
% because $l_*$ preserves $\A^1$-invariant objects be the above.
% By \eqref{eq:llhtrLa1htr} it follows that the unit of the adjunction is equivalent to the identity endofunctor on $\Pre^{\A\tr}(\mathcal S_{S,Z})$.
% %Thus $l_*l^*\simeq_{\A^1} \Id$.
% Since the functor $l_*$ in \eqref{eq:l*trAtr} is conservative, the functor $l_*$ in \eqref{eq:PA1trPA1Atr} is conservative.
% Thus the adjunction is the equivalence.
\end{proof}

\begin{lemma}\label{eq:AfrSZAsimeqZ}
The functor 
$\CorrAtr(\Smat_{S,Z})\rightarrow \CorrAtr(\Smat_Z)$
% in the first row of \eqref{eq:CorrfrAfrSZZ}
% $\Corr^{\A\tr}_{S,Z}\to \Corr^{\A\tr}_{Z}$
% \begin{equation*}% \label{eq:CorrfrSZZ}
% \Corr^{\A\tr}(\SmAff_{S,Z})\to \Corr^{\A\tr}(\SmAff_Z); X^h_Z\mapsto X_Z,\end{equation*}
% where $X_Z=X\times_S Z$, 
is an equivalence.
\end{lemma}
\begin{proof}
The claim follows because morphisms
$\CorrAtr_{S,Z}(X_1,X_2)\to \CorrAtr_{Z}(X_1\times_S Z,X_2\times_S Z)$
are weak equivalences 
%trivial fibrations
by \Cref{lm:LA1RLwAHPhenspairweakeq},
and for any scheme $X\in\Smat_Z$ there is a scheme $\tilde X\in \Smat_{S,Z}$ such that $\tilde X\times_S Z\cong X$.
\end{proof}

% The 
% base change along the immersion $i\colon Z\to S$
% leads to the functor 
% The base change along $i\colon Z\to S$
The mapping $X^h_Z\mapsto X_Z$ % induces 
defines the functor 
$\overline{i}^*_\Corr\colon \Corrtr(\mathcal S_{S,Z})\rightarrow\Corrtr(\mathcal S_Z)$
%in the second row of \eqref{eq:CorrfrAfrSZZ}
% \begin{equation}\label{eq:CorrfrSZZ}\Corr^\fr(\SmAff_{S,Z})\to \Corr^\fr(\SmAff_Z); X^h_Z\mapsto X_Z
% ,\end{equation} where $X\in \SmAff_S$, $X_Z=X\times_S Z\in \SmAff_Z$,
that induces the functors %of $\infty$-categories of presheaves
\begin{equation}\label{eq:PrefrSZZ}
\overline i^*\colon 
\SPretrrad(\mathcal S_{S,Z})\rightleftarrows \SPreradtr(\mathcal S_Z)
\colon \overline i_*
\end{equation} %$H^\fr$
given by $\overline i^*(h^\tr(X^h_Z))=h^\tr(X_Z)$, $\overline i_*(F)(X^h_Z)=F(X_Z)$.
%and similarly for $\Smat_\bbS$.
Since $\overline{i}^*_\Corr$
takes $(\A^1\times X)^h_Z$ to $\A^1\times X_Z$,
the functor $\overline i_*$ preserves $\A^1$-invariant objects, 
and $\overline i^*$ preserves $\A^1$-equivalences,
and \eqref{eq:PrefrSZZ} induces the adjunction
\begin{equation}\label{eq:HfrASZZ}
\overline i^*\colon \SPreradtrA(\mathcal S_{S,Z})\rightleftarrows \SPreradtrA(\mathcal S_Z)\colon \overline i_*.
\end{equation} %$H^\fr$
% because the horizontal functors in \eqref{eq:CorrfrAfrSZZ} below take $(\A^1\times X)^h_Z$ to $\A^1\times X_Z$.

\begin{proposition}\label{prop:HfrBcZsimeqHfrZ}
% Let $\mathcal S_*$ denote $\SmAff_*$ or $\Smat_*$, see \Cref{den:introductionnotation}. %, see \Cref{subsect:SZA1Z}
% Let $\mathcal S_\bbS$ be $\Smat_\bbS$ or $\SmAff_\bbS$.
% The functor $\overline i_*$
The adjunction \eqref{eq:HfrASZZ} is an equivalence.
% induce the
% induces equivalence of categories
% \[\PretrA(\mathcal S_{S,Z})\simeq \PretrA(\mathcal S_Z).\] %$H^\fr$
%
% The inverse and direct image functors 
% along %\eqref{eq:CorrfrSZZ} 
% the functor $\Corr^\fr(\SmAff_{S,Z})\to \Corr^\fr(\SmAff_Z)$
% %functor $i^*$ (and $i_*$) 
% %s
% induces equivalence of categories
% $\HfrA(\Sm_{S,Z})\simeq \HfrA(Z)$. %$H^\fr$
%Consequently, $\Pretr_{\stau,\A^1}(\Smat_{S,Z})\simeq \Pretr_{\tau,\A^1}(\Smat_Z)$ for a topology $\tau$ on $\Smat_Z$.
% The claim holds for the family of categories $\SmAff_*$ as well.
\end{proposition}
\begin{proof}
Consider the commutative square
\begin{equation}\label{eq:CorrfrAfrSZZ}
\xymatrix{
\CorrAtr(\Smat_{S,Z})\ar[r]& \CorrAtr(\Smat_Z)\\
\Corrtr(\Smat_{S,Z})\ar[u]\ar[r]& \Corrtr(\Smat_Z)\ar[u] 
,}\end{equation}
where the horizontal arrows take $X^h_Z$ to $X_Z$, and similar one for $\Smat_\bbS$.
% Consider the category
% $\Corr^{\A1\fr}_{S,Z}$
% with hom-spaces given by 
% $\Corr^{\A1\fr}_{S,Z}(X_1,X_2)\simeq L_{\A^1}h^{\fr}_{S,Z}(X_1)(X_2)$, where $h^{\fr}_{S,Z}(X_1)=h^{\fr}_{S,Z}(X_1)(-)$.
Then \eqref{eq:HfrASZZ} for $\mathcal S_* = \Smat_*$ is an equivalence %$\SPreradtrA(\Smat_Z)\simeq \SPreradtrA(\Smat_{S,Z})$ follows,
since 
by \Cref{lm:HA1CorrfrCorrAfr} and \Cref{eq:AfrSZAsimeqZ} there are the equivalences
\[
\xymatrix{
\SPreradAtr(\Smat_{S,Z})\ar[d]^{\simeq}& \SPreradAtr(\Smat_Z)\ar[d]^{\simeq}\ar[l]^{\simeq}\\
\SPreradtrA(\Smat_{S,Z})& \SPreradtrA(\Smat_Z) 
\ar[l],}
\] % The case of 

% So the claim is proved for $\Smat_\bbS$.
%
Then the claim for $\SmAff_\bbS$ follows because of \Cref{lm:PretrASmAffsimeqPretrASmat}. 
\end{proof}

\begin{lemma}\label{lm:SmatZSZ}
(1) For any $X\in \Smat_Z$ there is $\tilde X\in\Smat_{S,Z}$ such that $X=\tilde X\times_S Z$.
(2) For any given morphism $X\to Y\in \Smat_Z$, there is a morphism $\tilde X\to \tilde Y$ that goes to the given one along the functor $-\times_SZ$.
% $\widetilde X,\widetilde Y\in \Smat_{X,Z}$
% the morphism $\Smat_{S,Z}(\widetilde X,\widetilde Y)\to \Smat_Z(X,Y)$,
% where $X=\widetilde X\times_S Z$, $Y=\widetilde Y\times_S $,
% is surjective.
\end{lemma}
\begin{proof}
Point (1) follows because of \cite[Lemma A.11]{DKO:SHISpecZ}. Point (2) follows then by \cite[Lemma E.10]{DKO:SHISpecZ}.
\end{proof}
\begin{lemma}\label{lm:stauSmatoverAff}
Let $\tau$ be a topology on $\Smat_Z$, see \Cref{subsect:SZA1Z}.

(1)
Let $\tau$ be a topology on $\Smat_Z$.
A morphism $\widetilde X\to X$ is a $\tau$-covering 
if and only if there is a morphism $\tilde{\widetilde X}\to \tilde X$
such that $\tilde{\widetilde X}_Z\to \tilde X_Z$ is a $\tau$-covering.

(2)
A presheaf $F\in \SPreradtr(\Smat_Z)$ is a $\tau$-sheaf if and only if 
$\overline{i}_* F\in \SPreradtr(\Smat_{S,Z})$ is $\stau$-sheaf.
% A presheaf $F\in \Pre^\tr(\Smat_Z)$ is a $\tau$-sheaf if and only if 
% the presheaf $F(-\times_S Z)\in \Pre^\tr(\Smat_Z)$ is $\stau$-sheaf.
\end{lemma}
\begin{proof}
Point (1) follows from \Cref{lm:stau} because of \Cref{lm:SmatZSZ%
}%
% for any morphism $\widetilde X\to X$ in $\Smat_Z$ there is a morphism is $\Smat_{S,Z}$ that goes to the given one along the functor $\Smat_{S,Z}\to \Smat_Z$
.
We proceed with point (2). Since the functor $\Smat_{S,Z}\to \Smat_Z$ is continuous, the functor $\SPreradtr(\Smat_{S,Z})\to\SPreradtr(\Smat_Z)$ preserves the sheaves, and 
it detects the sheaves by point (1).
\end{proof}

\begin{proposition}\label{prop:HfrtauBcZsimeqHfrtau}
%Let $\mathcal S_\bbS$ be $\Smat_\bbS$ or $\SmAff_\bbS$.
% The functor $\overline i_*$
%
The adjunction \eqref{eq:HfrASZZ} %induce the
induces the equivalence %of $\infty$-categories
\begin{equation}\label{eq:HfrAstauSZZ}
\overline i^*\colon \HHtr_{\Sigma,\stau}(\mathcal S_{S,Z})\stackrel{\simeq}{\leftrightarrows} \HHtr_{\Sigma,\tau}(\mathcal S_Z)\colon \overline i_*
\end{equation} %$H^\fr$
for any topology $\tau$ on $\mathcal S_Z$.
%
%
% The adjunction \eqref{eq:HfrASZZ} %induce the
% induces the equivalence %of $\infty$-categories
% % \[\Pretr_{\stau,\A^1}(\mathcal S_{S,Z})\simeq \Pretr_{\tau,\A^1}(\mathcal S_Z)\] for a topology $\tau$ on $\mathcal S_Z$.
% \[\HHtr_\stau(\mathcal S_{S,Z})\simeq \HHtr_\tau(\mathcal S_Z)\] 
% for any topology $\tau$ on $\mathcal S_Z$.
% % The claim holds for the family of categories $\SmAff_*$ as well.
%
\end{proposition}
\begin{proof}
% A presheaf $F\in \Pre^\tr(\Smat_Z)$ is a $\tau$-sheaf if and only if 
% the presheaf $F(-\times_S Z)$ is $\stau$-sheaf by \Cref{lm:stauSmatoverAff}(2).
Let $\mathcal S_\bbS$ be $\Smat_\bbS$.
The equivalence proved in \Cref{prop:HfrBcZsimeqHfrZ} restricts to the equivalence on the subcategories of sheaves because of \Cref{lm:stauSmatoverAff}(2). 

By \Cref{lm:stau,lm:stauSmatoverAff}
a morphism in $\Smat_{S,Z}$ is a $\stau$-covering if and only if it is an $\stau$-covering in $\SmAff_{S,Z}$.
So the functor $\SPrerad(\SmAff_{S,Z})\to \SPrerad(\Smat_{S,Z})$ preserves $\stau$-sheaves,
and the claim for $\mathcal S_\bbS=\SmAff_\bbS$ follows.
%
% The proved equivalence for the categories of presheaves on $\Smat_*$ in \Cref{prop:HfrBcZsimeqHfrZ}
% restricts to the equivalence on the subcategories of sheaves because of \Cref{lm:stauSmatoverAff}(2). 
%
% The claim for $\SmAff_*$ follows because the functor $\Pre(\SmAff_{S,Z})\to \Pre(\Smat_{S,Z})$ preserves $\stau$-sheaves,
% because a morphism in $\Smat_{S,Z}$ is a $\stau$-covering if and only if it is an $\stau$-covering in $\SmAff_{S,Z}$ by \Cref{lm:stau,lm:stauSmatoverAff}.
\end{proof}

\subsection{}\label{sect:LocThConclusion} We conclude the main results of the section.
Let $\tau$ be a %subcanonical 
family of topologies on $\EssSmAff_\bbS$.
Consider the functor 
\[i_*\colon \HHtr_{\Sigma,\stau}(\SmAff_{Z})\to\HHtr_{\Sigma,\tau}(\SmAff_{S});\; i_*F(X)=F(X_Z).\]
Then $i_*\simeq\tilde i_*\overline i_*$, 
and the functor $i^!:=\overline i^*\tilde i^!$
is the right adjoint, 
where 
%$\overline i^*\colon \HHtrd_{\tau}(\SmAff_{S,Z})\to \HHtrd_{\stau}(\SmAff_{Z})$ 
$\overline i^*$ is from \eqref{eq:HfrAstauSZZ}.
The letter adjunctions induce the ones for the pointed $\infty$-categories $\HHtr_*(-)$.
% where $\overline i^*=(\overline i_*)^{-1}$ is the inverse functor to 
% the equivalence $\overline i_*\colon \HHtrd_{\tau}(\SmAff_{S})\to \HHtrd_{\stau}(\SmAff_{Z})$ given by \Cref{prop:HfrBcZsimeqHfrZ}.
% Consider the functor $i^*\colon \Pretr_{\A^1,\stau}(\SmAff_{Z})\to\Pretr_{\A^1,\tau}(\SmAff_{S})$; $i_*F(X^h_Z)=F(X_Z)$.
% Then $i_*\simeq\tilde i_*\overline i_*$, and $i_*$ has the right adjoint $i^!:=\overline i^*\tilde i^!$, where $\overline i^*=(\overline i_*)^{-1}$ is the inverse functor to the equivalence $\overline i_*\colon \Pretr_{\A^1,\tau}(\SmAff_{S})\to \Pretr_{\A^1,\stau}(\SmAff_{Z})$ given by \Cref{prop:HfrBcZsimeqHfrZ}.

\begin{theorem}\label{th:tautfLoc}
%Let $S$ be a scheme, $Z$ be a closed subscheme, 
% Let $i\colon Z\to S$ be a closed immersion of affine schemes.

% Let $\Corr_{(-)}$ be a family of an $\infty$-categories of correspondences on $\EssSm_{(-)}$ over $\Sch$ 
% that 
% % is continuous
% % %satisfies (FinE) 
% % in sense of \Cref{def:FinECorr}, and 
% satisfies (AHP) in sense of \Cref{def:CorrAHP}.

% Let $\tau$ be a %subcanonical 
% % topology on $\Sch_S$ such that 
% family of topologies on $\EssSmAff_\bbS$ such that 
% $\tau$ 
% is continuous
% %satisfies (FinE) 
% in sense of \Cref{def:FinProp},
Suppose that
$\tau\supset \tf$, and 
%such that 
$\wtau=\stau$ on $\SmAff_{S,Z}$, see \Cref{def:wtaustau}.
Consider the pair of adjunctions
\[\HHtrd_{\Sigma,\tau}(\SmAff_{Z})\rightleftarrows \HHtrd_{\Sigma,\tau}(\SmAff_{S})\rightleftarrows\HHtrd_{\Sigma,\tau}(\SmAff_{S-Z})\]
% \[\Pretr_{\A^1,\tau}(\SmAff_{Z})\rightleftarrows \Pretr_{\A^1,\tau}(\SmAff_{S})\rightleftarrows\Pretr_{\A^1,\tau}(\SmAff_{S-Z})\]
% \[\Hfr_{\A^1,\tau}(\SmAff_{S})\rightleftarrows \Hfr_{\A^1,\tau}(\SmAff_{S})\rightleftarrows\Hfr_{\A^1,\tau}(\SmAff_{Z})\]
given by $i_*\dashv i^!$, $j^*\dashv j_*$.
Then for any $F\in \HHtrd_{\Sigma,\tau}(\SmAff_{S})$, there is a pullback square
%Then for any $F\in \Pretr_{\A^1,\tau}(\SmAff_{S})$, there is a pullback 
\[\xymatrix{
i_* i^! F\ar[r]\ar[d] & F\ar[d]\\
{*}\ar[r] & j_* j^* F.
}\]
\end{theorem}
\begin{proof}
The claim follows by \Cref{prop:HtsSmBcZSmBSmU,prop:HfrtauBcZsimeqHfrtau}. %lm:LA1hfrhenspairweakeq}
% for the case of $\tau$ such that
% $\wtau=\stau$ on $\SmAff_{S,Z}$.
% In particular, the claim holds for $\tf$.
% For a general $\tau$ such that $\tau\supset\tf$ the claim follows form the case of $\tf$ and \Cref{lm:tildei*!j**preserveA1loctauloc}.
\end{proof}

\begin{theorem}\label{th:tautfLoc:istartjsharp}
Suppose that
$\tau\supset \tf$, and 
$\wtau=\stau$ on $\SmAff_{S,Z}$, see \Cref{def:wtaustau}.
Consider the pair of adjunctions
\[\HHtr_{\Sigma,\tau}(\SmAff_{Z})\leftrightarrows \HHtr_{\Sigma,\tau}(\SmAff_{S})\leftrightarrows\HHtr_{\Sigma,\tau}(\SmAff_{S-Z})\]
given by $i_*\vdash i^*$, $j^*\vdash j_\#$.
Then for any $F\in \HHtr_{\Sigma,\tau}(\SmAff_{S})$, there is a pushout square
\begin{equation*}\xymatrix{
\tilde i_*\tilde i^*F& F\ar[l]\\
{*}\ar[u]\ar[r] & j_\#j^*F\ar[u]\ar[l]
.}\end{equation*}
\end{theorem}
\begin{proof}
The claim follows by \Cref{prop:HtsSmBcZSmBSmU:istartjsharp,prop:HfrtauBcZsimeqHfrtau}. %lm:LA1hfrhenspairweakeq}
\end{proof}

\begin{corollary}\label{cor:tautfLoc}
The claims of \Cref{th:tautfLoc,th:tautfLoc:istartjsharp} hold for $\bbS=\Afffns$, and the topology $\tau=\nuf$ 
for any family of subtopologies $\nu$ of the \'etale topology, see \Cref{def:nuf}.
\end{corollary}
\begin{proof}
The property 
$\stau=\wtau$ holds by \Cref{lm:niszftfstauwtau}. 
\end{proof}

\begin{example}
The required properties on $\Corr_{(-)}$ in \Cref{th:tautfLoc} are satisfied for the category %$\Corr_{(-)}$ being 
$\SmAff_{(-)}$, and the $\infty$-category $\Corr^\fr_{(-)}$.
\end{example}

\begin{example}
% Recall that by \Cref{lm:niszftfstauwtau} 
% the property $\stau=\wtau$ holds for 
% the topology $\tau=\nuf$ 
% for any family of subtopologies $\nu$ of the Nisnevich topology, see \Cref{def:nuf}.

The example of the topologies $\nu$ in \Cref{cor:tautfLoc} are the trivial topology, Zariski topology, and the Nisnevich, or \'etale topologies. Then $\tau$ is the trivial fibre, Zariski fibre, and the Nisnevich, or \'etale topology respectively.

% So the example of the topologies $\tau$ in \Cref{th:tautfLoc} are 
% the trivial topology on $\Aff_{(-)}$ over $\Aff$, Zariski topology, and the Nisnevich topology. 
% The equality $\stau=\wtau$ for the Nisnevich topology is provided by \cite[Proposition 4.2]{DKO:SHISpecZ}, and the claim for Zariski topology and trivial fibre topology follows similarly. 
\end{example} 

\section{\'Etale excision and motivic localisation.}

%Zariski and Nisnevich localisations, and  for motivic spaces \subsection{Motivic localisation of presheaves with transfers.}
\label{sect:EtExLocA1niszar}

\subsection{General lemma on the excision property.}\label{sect:pointexcisive(pre)sheaf}

\begin{definition}\label{def:eX}
For an $\infty$-category $\calS$ and an object $X\in \calS$
denote by $\calS_X$ the comma category over $X$.
Define the functor
\[e_X\colon\SPre(\calS_X)\to \SPre(\calS)\]
that takes a presheaf $F$ on $\calS_X$
to a presheaf $G$ on $\calS$
\[G(W)=\coprod_{W\to X\in \calS_X}F(W\to X).\]
\end{definition}

\begin{lemma}\label{lm:ConservativeeXSXS}
For an $\infty$-category $\calS$ and $X\in \calS$ the functor $e_X\colon\SPre(\calS_X)\to \SPre(\calS)$ is conservative.
\end{lemma}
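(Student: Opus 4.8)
The plan is to show that a presheaf $F\in\SPre(\calS_X)$ is equivalent to the initial (or terminal) object precisely when $e_X(F)$ is, and more generally that a morphism $\phi\colon F\to F'$ in $\SPre(\calS_X)$ is an equivalence if and only if $e_X(\phi)$ is. Since equivalences of presheaves are detected objectwise, it suffices to compare the value of $e_X(F)$ at a well-chosen test object with the values of $F$ on $\calS_X$.

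First I would recall, from \Cref{def:eX}, the explicit formula: for $W\in\calS$ one has
\[
e_X(F)(W)=\coprod_{f\colon W\to X}F(W\xrightarrow{f} X),
\]
the coproduct being taken over the mapping space $\Map_\calS(W,X)$ viewed as a discrete indexing set of connected components, or, more precisely in the $\infty$-categorical setting, a colimit over the space $\Map_\calS(W,X)$ of the constant-ish diagram of values of $F$. The key observation is then that if we specialise $W$ to an object of the form underlying a morphism $g\colon Y\to X$ in $\calS_X$, the identity component $g\in\Map_\calS(Y,X)$ contributes the summand $F(Y\xrightarrow{g}X)$ to $e_X(F)(Y)$. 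Hence $F(Y\xrightarrow{g}X)$ is a retract (a direct summand) of $e_X(F)(Y)$, naturally in $(Y\to X)\in\calS_X$.

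With that in hand the argument is short: suppose $e_X(\phi)$ is an equivalence. Fix any object $(g\colon Y\to X)\in\calS_X$. Evaluating at $Y\in\calS$, the map $e_X(\phi)(Y)\colon e_X(F)(Y)\to e_X(F')(Y)$ is an equivalence of spaces; it is a coproduct over $\pi_0\Map_\calS(Y,X)$ (or a colimit over $\Map_\calS(Y,X)$) of the maps $\phi$ evaluated at the corresponding objects of $\calS_X$, and a map between coproducts is an equivalence if and only if it is a componentwise equivalence, since coproducts of spaces are disjoint and the formation of $\pi_0$ and of each summand is preserved. Restricting to the component of $g$ gives that $\phi(Y\xrightarrow{g}X)\colon F(Y\to X)\to F'(Y\to X)$ is an equivalence. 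As $(g\colon Y\to X)$ was arbitrary, $\phi$ is an objectwise equivalence in $\SPre(\calS_X)$, hence an equivalence. The converse direction (that $e_X$ preserves equivalences) is immediate since $e_X$, being a left Kan extension / colimit construction, is a functor and a fortiori preserves equivalences; but in any case only the "reflects equivalences" direction is what conservativity requires.

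The only subtle point, and the one I would state carefully, is the interaction of the coproduct in \Cref{def:eX} with detection of equivalences: one must use that in the $\infty$-category of spaces a coproduct is the disjoint union, so that $\coprod_i a_i\to\coprod_i b_i$ is an equivalence iff each $a_i\to b_i$ is, together with the fact that each object $(Y\to X)$ of $\calS_X$ really does appear as one of the indexing components when $e_X(F)$ is evaluated at $Y$. No $\A^1$-homotopy or topology input is needed; this is a purely formal statement about presheaf categories and comma categories, and the retract observation is the whole content.
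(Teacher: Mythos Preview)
Your proposal is correct and follows essentially the same line as the paper's proof: both evaluate $e_X(\phi)$ at the underlying object $W$ of an arbitrary $(W\to X)\in\calS_X$ and use that a map between coproducts in $\Spc$ is an equivalence if and only if each summand map is (the paper phrases this as ``the coproduct functor in $\mathrm{Spc}$ is conservative''). Your write-up is more careful about the $\infty$-categorical meaning of the indexing over $\Map_\calS(W,X)$, but the argument is the same.
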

\begin{proof}
Let $F\to F^\prime$ be a morphism in $\SPre(\calS_X)$.
Let $w\colon W\to X\in \calS_X$.
If $e_X(F)\simeq e_X(F^\prime)$
then the canonical morphism
$\coprod_{s\colon W\to X\in \calS_X}F(s)\to\coprod_{s\colon W\to X\in \calS_X}F^\prime(s)$
is an equivalence.
Since the coproduct functor in $\mathrm{Spc}$ is conservative 
the equivalence $F(w)\simeq F^\prime(w)$ follows.
\end{proof}

%changed
\begin{definition}
Let $Z$ be a Grothendieck topology on a category $\calS$.
We say that an object $F$ of $\SPre(\calS)$ is a $Z$-sheaf 
if for any $Z$-covering sieve $c\in \Pre(\calS)$ of $X\in \calS$ the map 
$F(X)\to \lim\limits_{X^\prime\in c} F(X^\prime)$
is an equivalence, where $c$ in the subscript the fibrant category over $\calS$ corresponding to $c$.
% Let $Z$ be a Grothendieck topology on a category $\calS$.
% We say that an object $F$ of $\SPre(\calS)$ is a $Z$-sheaf 
% if for any $Z$-covering $\tilde X\to X$ the map 
% $F(X)\to \lim\limits_{\Delta} F(\check C_X(\tilde X))$
% is an equivalence.
\end{definition}
For a Grothendieck topology $Z$ on a category $\calS$ denote by the same symbol the induced topology on $\calS_X$.
\begin{lemma}\label{lm:eXZsheaves}
The functor $e_X$ from Def. \ref{def:eX} preserves $Z$-sheaves. 
%local equivalences
\end{lemma}
\begin{proof}
The claim follows form the definitions.
\end{proof}

For a category $\calS$ denote by $\mathrm{pro-}\calS$ the category of pro-objects.
For any $F\in \SPre(\calS)$, we denote by the same symbol the corresponding continuous presheaf on $\mathrm{pro-}\calS$.
% We identify the category  $\SPre(\calS)$ with the category of continuous presheaves
% on $\mathrm{pro-}\calS$

\begin{lemma}\label{lm:pointproobjectsSX}
% Let $\mathcal T$ be a family of pro-objects in $\calS$ that define enough set of points for 
% a 
% hypercomplete
% Grothendieck topology $Z$ on $\calS$.
% Then for $X\in \calS$ the 
% family $\mathcal T_X$ of pro-objects in $\calS_X$ given by morphisms of the form $T\to X$, $T\in \mathcal T$,
% gives enough set of points for 
% the 
% hypercomplete 
% topology $Z$ on $\calS_X$.
Let $\mathcal T$ be a family of pro-objects in $\calS$ that define enough set of points for a Grothendieck topology $Z$ on $\calS$ such that the topos of sheaves $\Shv_Z(\mathcal T)$ is hypercomplete.
Then for $X\in \calS$ the 
family $\mathcal T_X$ of pro-objects in $\calS_X$ given by morphisms of the form $T\to X$, $T\in \mathcal T$,
gives enough set of points for 
the topology $Z$ on $\calS_X$,
and 
the topos of sheaves $\Shv_Z(\calS_X)$ is
hypercomplete.
\end{lemma}
\begin{proof}
Since a pro-object $T\in \mathcal T$ defines a point of $Z$ on $\calS$,
for any $Z$-covering $\widetilde W\to W$, $W\in \calS$, the morphism $c_W\colon \widetilde W\times_W T\to T$ has a right inverse $l_W$.
Since $c_W\circ l_W=\mathrm{id}_W$, 
for any morphism $W\to X$ the morphism $l_W$ defines the morphism in $\calS_X$.
Then it follows that any pro-object $T\to X\in \mathcal T_X$ 
is a point of the topology $Z$ on $\calS_X$.

We are going to show that the set of points $\mathcal T_X$ is enough for the topology $Z$ on $\calS_X$, and the topos is hypercomplete.
Let $F\to F^\prime$ be any morphism of $Z$-sheaves on $\calS_X$ such that 
\begin{equation}\label{eq:FeqFp(t)}
F(t)\simeq F^\prime(t)
\end{equation} for any $t\colon T\to X\in \mathcal T_X$.
By \Cref{lm:eXZsheaves} $e_X(F)\to e_X(F^\prime)$ is a morphism of $Z$-sheaves on $\calS$.
By the above %for any $T\in \mathcal T$ and any morphism $t\colon T\to X$, 
$t\colon T\to X\in \mathcal T_X$ defines a $Z$-point in $\calS_X$.
Hence for any $T\in \mathcal T$, and $F\to F^\prime$ as above, 
there are isomorphisms 
\begin{equation*}\label{eqqq}
e_X(F)(T)\simeq \coprod_{t\colon T\to X} F(t)\stackrel{\eqref{eq:FeqFp(t)}}{\simeq} \coprod_{t\colon T\to X} F^\prime(t) \simeq e_X(F^\prime)(T)
\end{equation*} 
where the middle one follows by \eqref{eq:FeqFp(t)} because 
for any $T\in \mathcal T$ and $t\colon T\to X$, we have $t\in \mathcal T_X$.
Since $\mathcal T$ is an enough set of $Z$-points for $Z$ on $\calS$,
it follows that $e_X(F)\simeq e_X(F^\prime)$,
and consequently $F\simeq F^\prime$ by \Cref{lm:ConservativeeXSXS}.
% We are going to show that the set of points $\mathcal T_X$ is enough for the topology $Z$ on $\calS_X$.
% Let $F\to F^\prime$ be a morphism of $Z$-sheaves on $\calS_X$ such that $F(T\to X)\simeq F^\prime(T\to X)$ for any $T\to X\in \mathcal T_X$.
% By \Cref{lm:eXZsheaves} $e_X(F)\to e_X(F^\prime)$ is a morphism of $Z$-sheaves on $\calS$.
% For any $T\in \mathcal T$ and morphism $t\colon T\to W$, 
% by the above $t$ define a $Z$-point in $\calS_X$, 
% hence $e_X(F)(T)\to e_X(F^\prime)(T)$ by assumption of the morpihsm $F\to F^\prime$.
% Then $e_X(F)\simeq e_X(F^\prime)$, since $\mathcal T$ is an enough set of $Z$-points in $\calS$.
% Thus $F\simeq F^\prime$ by \Cref{lm:ConservativeeXSXS}.
\end{proof}

% Let $N$ be a Grothendieck topology on a category $\calS$ and $Z$ be a subtopology of $N$.
% Assume that $N$ is complete decomposible, and 
% $Z$ admits enough set of points given by a family of pro-objects in $\calS$.

% For any pro-object $V$ in $\calS$ and, a morphism $V\to X$, 
% and an square $R$ 
% \begin{equation}\label{eq:NsquareWX}\xymatrix{
% W^\prime\ar[d]\ar[r] & X^\prime\ar[d]\\
% W\ar[r] & X
% }\end{equation}
% in $\calS$, 
% the fibred product $R\times_X V$ 
% is a square in the category of pro-objects.
% The squares of the form $R\times_X V$ an $N$-square over $V$.
% %We extent the $cd$-structure of the topology $N$ to the category of pro-objects.

% We consider a presheaf $F$ on $\calS$ being extended to the the category of pro-objects in $\calS$ as continuous presheaf.
% For any (pro)object $V$ in $\calS$ and an $N$-square $R$ as above 
% % \[
% % W^\prime\ar[d]\ar[r] & V^\prime\ar[d]\\
% % W\ar[r] & V
% % \]
% over $V$ we say that $F$ satisfies excision on $R\times_X V$ if 
% \begin{equation}\label{eq:hofibFwodetldeR}F(V)\simeq F( (W\amalg V^\prime)\times_X V)\to F(W^\prime\times_X V)).\end{equation}

\begin{definition}\label{def:excisive}
Let $R$ be a square
\begin{equation}\label{eq:NsquareWX}\xymatrix{
W^\prime\ar[d]\ar[r]^{i^\prime} & X^\prime\ar[d]^p\\
W\ar[r]^i & X
}\end{equation}
in $\calS$.
We say that a presheaf $F\in \SPre(\calS)$ is \emph{$R$-excisive} 
if
\begin{equation}\label{eq:hofibFwodetldeR}
F(X)\simeq F( W )\times_{F(X)} F( X^\prime ).\end{equation}

Given a morphism $V\to X$ in $\calS$,
we say that $F$ is \emph{$R$-excisive over $V$} 
if $F$ is excisive with respect to the square $R\times_X V$.
\end{definition}

Given a $\mathrm{cd}$-structure $N$ on a category $\calS$,
denote by the same symbol $N$ the completely decomposable topology $N$ on $\calS$
defined by the $\mathrm{cd}$-structure, and call the squares of $N$ by $N$-squares. 
Any $N$-sheaf is $N$-excisive and 
under the set of assumptions of the cirterion \cite[Theorem 3.2.5]{10.1215/00127094-0000014X}
a presheaf $F\in \SPre(\calS)$ is an $N$-sheaf if and only if 
it is $N$-excisive for each $N$-square. %%changed

\begin{lemma}[Main lemma] \label{lm:NexZpoints}
Let $N$ be a Grothendieck topology on a category $\calS$ and $Z$ be a subtopology of $N$.
Assume that $N$ is complete decomposible, and 
$Z$ admits enough set of points given by a family of pro-objects in $\calS$
and the associated topos is hypercomplete.

Let $F$ be a $Z$-sheaf on $\calS$ such that 
for each $Z$-point $U$ 
the presheaf $F$ is $N$-excisive over $U$.
Then $F$ is $N$-excisive, and it is an $N$-sheaf under the assumptions of \cite[Theorem 3.2.5]{10.1215/00127094-0000014X}.
\end{lemma}
\begin{proof}
Let $R$ be an $N$-square \eqref{eq:NsquareWX} in $\calS$ over the scheme $X$.
% Let $\calS_X$ be the comma category over $X$ in $\calS$, and 
% denote by the same symbols $Z$, $N$, and $F$ the topologies and the presheaf induced on $\calS_X$.
% Let $\mathcal T$ be a family of pro-objects in $\calS_X$ that defines enough set of points for the topology $Z$, 
% and $\mathcal T_X$ be the family of pro-objects in $\calS_X$ of the form $T\to X$, $T\in \mathcal T$.
% 
Consider the presheaf $F^\prime$ on the category of pro-objects in $\calS_X$ defined 
by the assignment
\begin{equation*}
\{V\to X\}\mapsto F( W\times_X V )\times_{F(X\times_X V)} F( X^\prime\times_X V ).\end{equation*}
Since finite %homomtopy 
limits commute with filtered colimits, $F^\prime$ is continuous, and since finite %homotopy 
limits commute with %homotopy 
limits, $F^\prime$ is a $Z$-sheaf on $\calS_X$.

It follows by the assumption on $F$ and Lemma~\ref{lm:pointproobjectsSX}, that 
$F$ is $R$-excisive over any $T\in\mathcal T_X$.
% for the square $R\times_X T$
% for any $T\in\mathcal T_X$. 
So we have equivalences $F(T)\simeq F^\prime(T)$ for each $T\in \mathcal T_X$.
Hence by Lemma~\ref{lm:pointproobjectsSX} the canonical morphism $F\to F^\prime$ is $Z$-local equivalence. 
Thus $F\simeq F^\prime$ since both presheaves are $Z$-sheaves. %n\colon 
Since $F^\prime$ is $R$-excisive by the definition, $F$ is $R$-excisive.

Thus $F$ is $R$-excisive for each $N$-square $R$ in $\calS$.
\end{proof}

\subsection{\'Etale excision theorem for framed presheaves.}\label{subsect:EtExFrCorr}

Let $\Corr_k$ be an additive $\infty$-category of correspondences over a field $k$ in the sense of \Cref{def:correspondencesCorrS},
and suppose that the canonical functor $\Sm_k\to \Corr_k$ passes through the graded classical category of framed correspondences $\Fr_+(k)$ defined in \cite{GP14}, and the functor $\Fr_+(k)\to \Corr_k$ takes the morphisms $\sigma_X$ for all $X\in \Sm_k$ to the elements in $\Map_{\Corr_k}(X,X)$ equivalent to the identity morphisms. %changed 
\begin{proposition}[\'Etale excision]\label{prop:NisExcLoc}
Given a smooth scheme $X$ over a field $k$,
a Nisnevich square $R$ of the form \eqref{eq:NsquareWX},
and a point $x\in X$, we have that
any $\A^1$-invariant presheaf $F\in \SPre^\tr_\Sigma(k)$
is $R$-excisive over the local scheme $X_x$.
i.e. the map 
\[
F(X_x) \to 
F(W\times_X X_x) \times_{F( W^\prime\times_{X^\prime} X^\prime_{p^{-1}(x)} )} F(X^\prime_{p^{-1}(x)})
\]
is an equivalence. 
\end{proposition}
\begin{proof}
It suffices to show that the map 
\[
\pi_i
\Fib (F(X_x) \stackrel{i^*}\to F( W\times_X X_x )) 
\to 
\pi_i
\Fib(F(X^\prime_{p^{-1}(x)}) \stackrel{i^{'*}}\to F(W^\prime\times_{X^\prime} X^\prime_{p^{-1}(x)})
\]
%todo
is an isomorphism for any $i\in\mathbb Z$.
%todo morphism(eq:NsquareWX)anddegree(abovedyspaly)
% $$\pi_i\Fib (F(X_x) \stackrel{i^*}\to F(U_{i^{-1}(x)})) \to \pi_i\Fib(F(V_{p^{-1}(x)}) \stackrel{i^{'*}}\to F((U \times_X V)_{(p'i)^{-1}(x)}))$$
% is an isomorphism for any $i$. 
% There are the long exact sequences
% \[\begin{array}{lclclclcl}
% \cdots
% &
% \to
% &
% \pi_{i+1}F(W\times_X X_x)
% &
% \to
% &
% \pi_i
% \Fib (F(X_x) \stackrel{i^*}\to F(W^\prime\times_{X^\prime} X_x )) 
% &
% \to
% &
% \pi_i(F(X_x))
% &
% \to
% &
% \cdots
% \\
% \cdots
% &
% \to
% & 
% \pi_{i+1}F(W^\prime\times_{X^\prime} X^\prime_{p^{-1}(x)})
% &
% \to
% &
% \pi_i
% \Fib (F(X^\prime_{p^{-1}(x)}) \stackrel{i^*}\to F( W^\prime\times_{X^\prime} X^\prime_{p^{-1}(x)} )) 
% &
% \to
% &
% \pi_i(F(X^\prime_{p^{-1}(x)}))
% &
% \to
% &
% \cdots
% \end{array}\]
There is a long exact sequence
\begin{equation}\label{eq:hglesF(WX)}
\cdots
\to
\pi_{i+1}F(W\times_X X_x)
\to
\pi_i
\Fib \big(F(X_x) \stackrel{i^*}\to F(W^\prime\times_{X^\prime} X_x )\big) 
\\
\to
\pi_i(F(X_x))
\to
\cdots
\end{equation}
Note that we are implicitly using here that taking fibres commutes 
with taking filtered colimits. 
The long exact sequence \eqref{eq:hglesF(WX)}
together with the similar one for 
$X^\prime$ instead of $X$,
$X^\prime_{p^{-1}(x)}$ instead of $X_x$,
and $W^\prime$ instead of $W$,
imply
%Hence there are 
the short exact sequences
\[\begin{array}{lclcl}
\Coker (\pi_{i+1}(i^*))
&
\to
&
\pi_i
\Fib (F(X_x) \stackrel{i^*}\to F( W\times_X X_x )) 
&
\to
&
\Ker (\pi_i(i^*))
\\
\Coker (\pi_{i+1}(i^{'*}))
&
\to
&
\pi_i
\Fib (F(X^\prime_{p^{-1}(x)}) \stackrel{i^*}\to F( W^\prime\times_{X^\prime} X_x )) 
&
\to
&
\Ker (\pi_i(i^{'*}))
\end{array}\]
% Since taking homotopy groups commutes with filtered colimits hidden in the definition of values of a presheaf on essentially smooth schemes, the left-hand side is an extension of $\Ker (\pi_i(i^*))$ by $\Coker (\pi_{i+1}(i^*))$ 
% and the right-hand is an extension of 
% $\Ker (\pi_iF(i^{'*}))$ by $\Coker (\pi_{i+1}F(i^{'*}))$. 
The kernels in the above short exact sequences are trivial by \cite[Theorem~2.15(3)]{GP-HIVth} and \cite[Corollary~3.6(2)]{DrKyl}, 
and the maps 
\[
\Coker (\pi_{i+1}(i^*))
\to 
\Coker (\pi_{i+1}(i^{'*}))
\]
are isomorphisms by \cite[Theorem~2.15(5)]{GP-HIVth} in combination with the main result of \cite{DruzhPanin-SurjEtEx} and \cite[Corollary~3.6(4)]{DrKyl}. 
\end{proof}
%todo change
% \begin{corollary}\label{cor:NisExcLoc}
% Given a smooth scheme $X$ over a field $k$,
% a Nisnevich square $R$ of the form \eqref{eq:NsquareWX},
% and a point $x\in X$. 
% Then any $\A^1$-invariant additive presheaf of abelian groups $F$ on $\Sm_S$
% the sequence
% \[
% 0 \to F(X_x) \to 
% F(W\times_X X_x) \oplus F(X^\prime_{p^{-1}(x)})\to 
% F(W^\prime\times_{X^\prime} X^\prime_{p^{-1}(x)})
% \to 0
% \]
% is short exact. 
% \end{corollary}
% \begin{proof}
% By Proposition \ref{prop:NisExcLoc} 
% there is a quasi-isomorphism of complexes
% \[
% F(X_x) \to \mathrm{coCone}(
% F(W\times_X X_x) \oplus F(X^\prime_{p^{-1}(x)})\to 
% F( W^\prime\times_{X^\prime} X^\prime_{p^{-1}(x)} )
% ).
% \]
% So the claim follows.
% \end{proof}

% \begin{proposition}\label{prop:SmZar=NIs}
% % Given a category of correspondences $\Corr_k$ over a field $k$ that satisfies (\'EtEx),
% %Any essentially smooth scheme $X$ over a spectrum of a field $S$ is Zar-Nis motivically trivial. 
% %In other words 
% Any group-like $\A^1$-invariant Zariski sheaf $F \in \SPrefr_\Sigma(S)$ is a Nisnevich sheaf. %on $\Et_X$.
% \end{proposition}
% \begin{proof}
% % It suffices to prove that for any Nisnevich square of smooth schemes~\ref{eq:NissqXUV} 
% % the map $F(X) \to F(U) \times_{F(U \times_X V)} F(V)$ is an equivalence. 
% Combining Proposition~\ref{prop:NisExcLoc} and 
% Lemma~\ref{lm:NexZpoints} % Lemma~\ref{lm:ZarLocCov} 
% we get the claim. %see that it is an equivalence. 
% \end{proof}

\subsection{Zariski and Nisnevich motivic localisations}\label{subsect:EtExLocA1niszar}% of \'etale excisive presheaves
Let $k$ be a field.

\begin{definition}\label{def:etex}
An $\infty$-category of correspondences $\Corr_k$ over $k$, see \Cref{def:preaddcorrespondencescategoryCorrS}%def:corrcat
, satisfies  \emph{the property (\'EtEx)} if
any $\A^1$-invariant presheaf $F\in \SPretr_\Sigma(S)$ is $R$-excisive over the essetially smooth local scheme $X_x$, for any Nisnevich square $R$ of the form \eqref{eq:NsquareWX}  over $k$. See \Cref{def:excisive}.
\end{definition}

\begin{example}\label{ex:CorrfrEtEx}
\Cref{prop:NisExcLoc} equivalently claims that
the $\infty$-category $\Corr^{\gp,\fr}(k)$ equipped with the composite functor $\Sm_k\to \Corr^\fr(k)\to \Corr^{\gp,\fr}(k)$ satisfies (\'EtEx).
\end{example}
% \Cref{prop:SmZar=NIs} can be formulated in general in the following form.
% 
% Combining Proposition~\ref{prop:NisExcLoc} and 
% Lemma~\ref{lm:NexZpoints} % Lemma~\ref{lm:ZarLocCov} 
% we get the claim. %see that it is an equivalence. 

\begin{lemma}\label{lm:CorrSmZar=NIs}
Given an $\infty$-category of correspondences $\Corr_k$ over %a
$k$, see \Cref{def:preaddcorrespondencescategoryCorrS}, 
%and a functor $\Sm_k\to \Corr_k$ 
that satisfies (\'EtEx),
%Any essentially smooth scheme $X$ over a spectrum of a field $S$ is Zar-Nis motivically trivial. 
%In other words 
% any $\A^1$-invariant Zariski sheaf $F \in \SPretr(S)$ is a Nisnevich sheaf. %on $\Et_X$.
then
an $\A^1$-invariant presheaf $\mathcal F\in \SPre^\mathrm{tr}_\Sigma(k)$ is a Nisnevich sheaf if and only if $F$ is a Zariski sheaf.
In particular, any group-like $\A^1$-invariant Zariski sheaf $F \in \SPrefr_\Sigma(S)$ is a Nisnevich sheaf. %on $\Et_X$.
\end{lemma}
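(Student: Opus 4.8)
The plan is to deduce \Cref{lm:CorrSmZar=NIs} from the Main Lemma (\Cref{lm:NexZpoints}) applied to the pair of topologies $Z = \zar$ and $N = \nis$ on the category $\Sm_k$. These are exactly in the situation required by that lemma: the Nisnevich topology is completely decomposable (it is the topology associated to the Nisnevich $\mathrm{cd}$-structure, whose squares are the elementary distinguished squares of the shape \eqref{eq:NsquareWX}), and the Zariski topology admits an enough set of points given by the pro-objects $X_x$, the localisations of smooth schemes $X\in\Sm_k$ at their points $x\in X$. The one nontrivial hypothesis of \Cref{lm:NexZpoints} to verify is: for every Zariski point $U$, the presheaf $F$ is $N$-excisive over $U$, i.e.\ $R$-excisive after base change $R\times_X U$ for every Nisnevich square $R$.

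First I would fix an $\A^1$-invariant presheaf $\mathcal F\in\SPre^\tr_\Sigma(k)$ which is a Zariski sheaf, and let $R$ be a Nisnevich square over some $X\in\Sm_k$ as in \eqref{eq:NsquareWX}. For a Zariski point, i.e.\ an essentially smooth local scheme $X_x$ over a point $x\in X$, the base-changed square $R\times_X X_x$ is again a Nisnevich square (elementary distinguished squares are stable under base change), and the property (\'EtEx) of \Cref{def:etex} — which holds for $\Corr_k$ by assumption — says precisely that $\mathcal F$ is $R$-excisive over $X_x$. Hence the hypothesis of \Cref{lm:NexZpoints} is satisfied, and that lemma yields that $\mathcal F$ is a Nisnevich sheaf. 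The converse direction, that every Nisnevich sheaf is a Zariski sheaf, is immediate because the Zariski topology is coarser than the Nisnevich topology; this requires no $\A^1$-invariance and no transfers. This settles the main equivalence.

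For the last sentence of the statement I would specialise to $\Corr_k = \Corr^{\fr,\gp}(k)$, which satisfies (\'EtEx) by \Cref{ex:CorrfrEtEx} (equivalently by \Cref{prop:NisExcLoc}). By \cite[Lemma 1.6]{UnivMultSp} and the remarks in \Cref{den:introductionnotation}, a group-like $F\in\SPrefr_\Sigma(S)$ is the same datum as an object of $\SPre^{\fr,\gp}_\Sigma(S) = \SPre_\Sigma(\Corr^{\fr,\gp}(S))$, so an $\A^1$-invariant group-like Zariski sheaf in $\SPrefr_\Sigma(S)$ falls under the first part of the lemma applied to $\Corr^{\fr,\gp}(k)$ and is therefore a Nisnevich sheaf. (Here one reads $S$ as the field $k$ in the notation of the lemma, matching the context of this subsection.)

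I do not expect a serious obstacle: the real content is entirely in \Cref{prop:NisExcLoc}/(\'EtEx) and in the Main Lemma, both already available. The only points requiring a little care are the compatibility bookkeeping — checking that Nisnevich squares base-change to Nisnevich squares over essentially smooth local schemes, so that (\'EtEx) in the form of \Cref{def:etex} matches the $N$-excisiveness-over-points hypothesis of \Cref{lm:NexZpoints} — and remembering that $\A^1$-invariance plays no role in the trivial converse direction. Thus the proof is a short two-step argument: (coarser topology) $\Rightarrow$ one implication; (\'EtEx) $+$ \Cref{lm:NexZpoints}) $\Rightarrow$ the other.
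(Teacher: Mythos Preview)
Your proposal is correct and takes essentially the same approach as the paper: both reduce the claim directly to the Main Lemma (\Cref{lm:NexZpoints}) with $Z=\zar$, $N=\nis$, using (\'EtEx) to supply the $N$-excisiveness over Zariski points, and the paper's own proof is in fact even terser than yours. Your additional remarks on the trivial converse and the specialisation to $\Corr^{\fr,\gp}(k)$ via \Cref{ex:CorrfrEtEx} are exactly the unstated details the paper leaves implicit.
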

\begin{proof}
% It suffices to prove that for any Nisnevich square of smooth schemes~\ref{eq:NissqXUV} 
% the map $F(X) \to F(U) \times_{F(U \times_X V)} F(V)$ is an equivalence. 
The first claim follows by \Cref{def:etex} from 
Lemma~\ref{lm:NexZpoints}, the second by \Cref{ex:CorrfrEtEx}. % Lemma~\ref{lm:ZarLocCov} 
 %see that it is an equivalence. 
\end{proof}

% \begin{corollary}\label{cor:ZarTrNis}
% Let $\Corr_k$ be a category of correspondences over a field $k$ that satisfies (\'EtEx).
% % An $\A^1$-invariant presheaf $\mathcal F\in \SPre(\mathrm{Corr}^\mathrm{tr}_k)$ is Nisnevich sheaf iff $F$ is a Zariski sheaf.
% % An $\A^1$-invariant presheaf $\mathcal F\in \SPre(\mathrm{Corr}^\mathrm{tr})$ is Nisnevich acyclic iff it is Zariski acyclic.
% \end{corollary}

% \section{Categorical result}

% The subcategories of $\SPre_\Sigma(S)=\SPre_\Sigma(\Sm_S)$, $\SPre^\tr_\Sigma(S)=\SPre_\Sigma(\mathrm{Corr}_S)$ consisting of 
% $\A^1$-invariant, Zariski local, Nisnevich local objects, and their intersection 
% are reflective subcategories in $\SPre_\Sigma(S)$ or $\SPretr_\Sigma(S)$.
% We denote the corresponding localisation functors by 
% $L_{\A^1}, L_{\zar}$ (resp. $L_{\nis})$, and $L_{\A^1\zar}$ (resp. $L_{\A^1\nis})$. 

%\begin{theorem}
%Consider the category $\SPre_{\A^1}(\mathrm{Corr}^{\mathrm{fr})$ that is the $\A^1$-localisation of $\SPre(\mathrm{Corr}^{\mathrm{fr})$. 
%Let $L_\mathrm{nis}^{\A^1}$ and $L_\mathrm{Zar}^{\A^1}$ denote the functors of Nisnevich and Zariski local replacements on $\SPre_{\A^1}(\mathrm{Corr}^{\mathrm{fr})$.
%Then $L_\mathrm{nis}\simeq L_\mathrm{Zar}$ 
%\end{theorem}
%
\begin{theorem}\label{th:SHZar=Nis}
Let $k$ be a field, and $\Corr_k$ be an $\infty$-category of correspondences over $k$ that satisfies (\'EtEx).
There is an equivalence of $\infty$-categories 
$\mathbf{H}^{\tr}_{\nis}(k)\simeq \mathbf{H}_{\zar}^{\tr}(k)$.
In particular,
$\mathbf{H}^{\fr,\gp}_{\nis}(k)\simeq \mathbf{H}_{\zar}^{\tr,\gp}(k)$,
$\SH^{\fr}_\nis(k)\simeq \SH^{\fr}_\zar(k)$.
\end{theorem}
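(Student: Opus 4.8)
The plan is to deduce \Cref{th:SHZar=Nis} from the Main Lemma (\Cref{lm:NexZpoints}) applied to the pair of topologies $Z=\zar$ and $N=\nis$ on the category $\Sm_k$. The Zariski topology admits enough points given by the pro-objects $X_x$, the localisations of smooth $k$-schemes $X$ at their points $x$ (these are essentially smooth local schemes), so the hypothesis on $Z$ is satisfied; the Nisnevich topology is completely decomposable via the Nisnevich squares, so the hypothesis on $N$ is satisfied. The property (\'EtEx) for $\Corr_k$ says precisely that any $\A^1$-invariant $F\in\SPretr_\Sigma(k)$ is $R$-excisive over every such local scheme $X_x$ for every Nisnevich square $R$; hence \Cref{lm:NexZpoints} tells us that such an $F$ is a Zariski sheaf if and only if it is a Nisnevich sheaf. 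This is exactly the content of \Cref{lm:CorrSmZar=NIs}, which I will invoke.

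First I would make precise the comparison of the categories $\HH^\tr_\nis(k)$ and $\HH^\tr_\zar(k)$: both are by definition full subcategories of the radditive presheaf category $\SPretr_\Sigma(k)$, spanned by the $\A^1$-invariant objects that are, respectively, Nisnevich- or Zariski-local. Since $\zar\subset\nis$, every Nisnevich sheaf is a Zariski sheaf, so there is an evident fully faithful inclusion $\HH^\tr_\nis(k)\hookrightarrow\HH^\tr_\zar(k)$. By \Cref{lm:CorrSmZar=NIs} an $\A^1$-invariant radditive presheaf with transfers is Nisnevich-local exactly when it is Zariski-local, so this inclusion is essentially surjective, hence an equivalence of $\infty$-categories. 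This establishes $\HH^\tr_\nis(k)\simeq\HH^\tr_\zar(k)$.

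Next I would pass to the stabilised variants. By construction $\SH^{S^1,\tr}_\tau(k)=\HH^\tr_\tau(k)[(S^1)^{\wedge-1}]$ and $\SH^\tr_\tau(k)=\SH^{S^1,\tr}_\tau(k)[\Gm^{\wedge-1}]$, so the equivalence $\HH^\tr_\nis(k)\simeq\HH^\tr_\zar(k)$ propagates through the formation of $S^1$-spectra and then of $\Gm$-spectra, because inverting $S^1$ and $\Gm$ are functorial constructions on presentable $\infty$-categories that take equivalences to equivalences. Specialising to $\Corr_k=\Corr^{\gp,\fr}(k)$, which satisfies (\'EtEx) by \Cref{ex:CorrfrEtEx} (equivalently \Cref{prop:NisExcLoc}), and recording that by \cite[Lemma 1.6]{UnivMultSp} the group-like objects in $\SPrefr_\Sigma(k)$ coincide with $\SPre^{\fr,\gp}_\Sigma(k)$, we obtain $\HH^{\fr,\gp}_\nis(k)\simeq\HH^{\tr,\gp}_\zar(k)$ and $\SH^\fr_\nis(k)\simeq\SH^\fr_\zar(k)$.

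The only real content is the Main Lemma together with the (\'EtEx) input, both already available; the remaining work is essentially bookkeeping about which localisations are being compared and checking that the two Grothendieck topologies verify the hypotheses of \Cref{lm:NexZpoints}. The main obstacle to watch for is a subtle one: one must be careful that the relevant notion of ``sheaf'' is the one preserved by \Cref{lm:NexZpoints} (excisiveness for the cd-structure squares on the underlying category $\Sm_k$, with values in the presheaf-with-transfers sense via the forgetful functor to $\SPre(\Sm_k)$), and that $\A^1$-invariance interacts correctly, i.e.\ that the localisation functors $L_\zar$ and $L_\nis$ preserve $\A^1$-invariance of radditive presheaves with transfers so that the subcategories really are reflective and the comparison inclusion is the expected one; this is where the hypothesis that the transfers pass through $\Fr_+(k)$ (hence strict homotopy invariance, used in proving \Cref{prop:NisExcLoc}) is doing its work.
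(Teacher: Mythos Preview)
Your proof is correct and follows essentially the same route as the paper: invoke \Cref{lm:CorrSmZar=NIs} (which is itself the application of \Cref{lm:NexZpoints} to $Z=\zar$, $N=\nis$) to identify the two full subcategories of $\SPretr_\Sigma(k)$, then specialise to $\Corr^{\fr,\gp}(k)$ via \Cref{ex:CorrfrEtEx} and \cite[Lemma 1.6]{UnivMultSp}, and note that the equivalence propagates to the stabilisations. Your closing caveat about needing $L_\zar$ and $L_\nis$ to preserve $\A^1$-invariance is unnecessary here: the argument only compares the two reflective full subcategories and shows they coincide as subcategories of $\SPretr_\Sigma(k)$, so no compatibility of the individual localisation functors is required.
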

\begin{proof}
The first claim follows from \Cref{lm:CorrSmZar=NIs}, since 
the $\infty$-categories $\mathbf{H}_{\nis}^{\tr}(k)$ and $\HH_{\zar}^{\tr}(k)$ are 
the full subcategories of in $\SPretr_\Sigma(k)$ whose objects are group-like $\A^1$-invariant Nisnevich and Zariski sheaves, respectively.
The case of framed presheaves follows because of \Cref{ex:CorrfrEtEx} and the equivalence $\SPre^{\fr,\gp}(k)\simeq \SPre(\Corr^{\fr,\gp}(k))$ provided by \cite[Lemma 1.6]{UnivMultSp}.
Note that the case of $\mathbf{H}^{\tr}(k)$ implies the equivalences for $\mathbf{SH}^{S^1,\tr}(k)$, $\mathbf{SH}^{\tr}(k)$.
\end{proof}

% \begin{remark}\label{rem:LfrzarLfrnisLzarLnis}
% Let us note that
% the Nisnevich localisations on categories $\SPrefrgp_\Sigma(S)$ agrees with the one on $\SPre_\Sigma(S)$, 
% and the same holds for Zariski fibre on the subcategories of $\A^1$-invariant presheaves,
% but the statement for $L_\zar$ is probably not true,
% though these three statements are not discussed in the text.

% In terms of cohomologies of abelian presheaves the first statement claims the isomorphism of
% the Nisnevich cohomology groups $H_\nis^i(X,F)$ for a presheaf with framed transfers $F$ over a scheme $S$ and the groups $\mathrm{Ext}_{\Sh^\mathrm{tr}_\nis(k)}(\mathbb{Z}_\mathrm{tr}(X),F[i])$ in the category of Nisnevich sheaves with transfers.
% The argument in \cite{Voe-motives} holds for any field $k$, and a presheaf with $\mathrm{Cor}_k$-transfers, and moreover the same source contains the proof for the Zariski case over a field.
% The statement for framed transfers and the Nisnevich topology is contained \cite{GP-HIVth}.
% \end{remark}

\subsection{Result over base schemes}\label{subsect:ResultingSummary}

% \subsection{Excision theorem}

% In the subsection we prove
% \Cref{prop:relaitve:SHfrS1ZarSHfrSNis} proved in this subsection proves \Cref{th:SHtrzarSHtrnis}, and 
% in view of \Cref{ex:tf} and \Cref{ex:zf} it implies \Cref{th:SHfrzarSHfrnis}.

% Let $\Corr_S$ be a family of preadditive $\infty$-categories of correspondences, see \Cref{def:preaddcorrespondencesCorrS}, that satisfies properties (Embed), (FinE), and (\'EtEx), \Cref{def:FinECorr,def:EmbedCorr,def:etex}.

% We use easy fact to reduce the case of $\Sm_S$ to $\SmAff_S$.
\begin{lemma}\label{lm:widetildeSzarSConservativity}
Let $\nu$ be the Zariski or the Nisnevich topology on $\Sm_S$ over a noetherian separated scheme $S\in\Sch$ of finite Krull dimension.
Then 
\begin{itemize}
\item[(1)]
the canonical restriction 
\[\Shv^\tr_\nu(S)\to \Shv^\tr_\nu(\SmAff_S)\]
is an equivalence;
\item[(2)]
for a Zariski covering $\widetilde S\to S$
the inverse image functor
\[\Shv^\tr_\nu(S)\to \Shv^\tr_\nu(\widetilde S)\]
is conservative.
\end{itemize}
\end{lemma}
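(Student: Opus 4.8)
The plan is to handle the two statements separately, each by a standard sheaf-theoretic descent argument, using only that the Zariski and Nisnevich topologies on $\Sch_S$ are completely decomposable and that every smooth $S$-scheme admits a Zariski cover by smooth affine $S$-schemes.

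For (1), I would first observe that the restriction functor $\Sh^\tr_\nu(S) \to \Sh^\tr_\nu(\SmAff_S)$ is induced by the inclusion $\Corr(\SmAff_S) \hookrightarrow \Corr_S$, and that a quasi-inverse is given by right Kan extension along this inclusion. To see that these are mutually inverse, the key point is that for any $X \in \Sm_S$ the representable $\nu$-sheaf $h^\tr_S(X)$ is computed, via the Čech/covering-sieve formula, as a colimit of representables $h^\tr_S(U_\alpha)$ with $U_\alpha \in \SmAff_S$ running over a Zariski hypercover of $X$ by smooth affines (and their pairwise intersections, which are again smooth affine since $S$, hence everything in sight, is separated). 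Concretely: evaluating a $\nu$-sheaf $F$ on $\Sm_S$ on $X$ agrees with evaluating its restriction on the affine Čech nerve, because $F$ satisfies descent for the Zariski cover $\{U_\alpha \to X\}$; this shows the restriction is fully faithful. Essential surjectivity follows because a $\nu$-sheaf on $\SmAff_S$ extends to a $\nu$-sheaf on $\Sm_S$ by the same Čech formula, and one checks descent for a general $\nu$-square using that $\nu$ is completely decomposable and that any distinguished square can be refined by affine ones. I expect the bookkeeping here — verifying that the extended presheaf really is a $\nu$-sheaf on all of $\Sm_S$, not just Zariski-locally — to be the main obstacle, though it is routine given complete decomposability.

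For (2), given a Zariski covering $\widetilde S \to S$, I would argue that a morphism $F \to G$ in $\Sh^\tr_\nu(S)$ whose image in $\Sh^\tr_\nu(\widetilde S)$ is an equivalence is itself an equivalence. It suffices to check this on sections over each $X \in \Sm_S$ (or, by part (1), over each $X \in \SmAff_S$). Pulling back the Zariski cover of $S$ to $X$ produces a Zariski cover $\{X \times_S \widetilde S_i \to X\}$, and since $F$ and $G$ are $\nu$-sheaves (in particular Zariski sheaves), $F(X)$ and $G(X)$ are recovered as the limits over the Čech nerve of this cover. On each term of the Čech nerve the map $F \to G$ becomes an equivalence because that term is a smooth $\widetilde S$-scheme (using that $\widetilde S \to S$ is a monomorphism, so $X \times_S \widetilde S_i \times_X X \times_S \widetilde S_j$ is again over $\widetilde S$) and $j^* F \simeq j^* G$. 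Passing to the limit gives $F(X) \simeq G(X)$. The only subtlety is compatibility of the transfer structure with these restrictions, which is guaranteed by the compatibility $r_* j^* \simeq j^* r_*$ already recorded in the excerpt.
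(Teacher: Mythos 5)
Your proposal is correct, but it takes a more explicit route than the paper, which disposes of the whole lemma in one line: any (henselian) local scheme $U$ over $S$ is a pro-object of $\SmAff_S$ and, for a Zariski covering $\widetilde S\to S$, the morphism $U\to S$ factors through $\widetilde S$; since the Zariski and Nisnevich topologies have enough points given by such local schemes, both the restriction to affines and the pullback to $\widetilde S$ detect equivalences on points, whence the claim. Your argument instead runs the comparison--lemma/\v{C}ech-descent machinery directly: for (1) you invoke right Kan extension along $\Corr(\SmAff_S)\hookrightarrow\Corr_S$ together with the fact that smooth affines form a basis (with affine refinements of distinguished squares), and for (2) you compute $F(X)$ as a limit over the \v{C}ech nerve of the pulled-back cover, whose terms all live over $\widetilde S$. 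Both work; your version is longer but more self-contained (it does not presuppose the enough-points statement, and it actually addresses essential surjectivity in (1), which the paper's one-liner glosses over), while the paper's is shorter because it reuses the point-theoretic setup of its Section 1. Two small cosmetic points in your write-up: a Zariski covering $\widetilde S=\coprod_i S_i\to S$ is not itself a monomorphism, only each component $S_i\to S$ is, which is what you actually need to see that the \v{C}ech-nerve terms $X\times_S(S_i\cap S_j)$ are $\widetilde S$-schemes; and conservativity of the forgetful functor to $\SPre(\Sm_S)$ (essential surjectivity of $\Sm_S\to\Corr_S$) is what reduces the check to sections over schemes, as you correctly note at the end.
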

\begin{proof}
(1) Any scheme in $\Sm_S$ has a Zariski covering $v\colon \widetilde S\to S$ in $\SmAff_S$, and consequently, there is a $\nu$-local equivalence
$h(S)\simeq_\nu \colim_{[n]\in\Delta}h(\widetilde S^{\times n})$.
Applying $\gamma^*\colon \Pre(S)\to\Pre^\tr(S)$,
we get $h^\tr(S)\simeq_\nu \colim_{[n]\in\Delta}h^\tr(\widetilde S^{\times n})$.
Hence the fully faithful Kan extension functor $\Shv^\tr_\nu(\SmAff_S)\to \Shv^\tr_\nu(S)$ is essentially surjective.
So the claim follows.

(2)
A morphism $F\to G \in \Shv^\tr_\nu(S)$ is an equivalence 
if the induced map
$\Map(h^\tr(X), F) \to \Map(h^\tr(X), G)$ 
is an equivalence for 
any $X \in \Sm_S$. 
Using the equivalence shown above, this induced map can be rewritten 
as
\[
\operatorname{lim}_{[n]\in\Delta}F(X\times_S\widetilde S^{\times n}) \to \operatorname{lim}_{[n]\in\Delta}G(X\times_S\widetilde S^{\times n}).
\]
Each map in the simplicial diagram is an equivalence by assumption.
\end{proof}

\begin{theorem}\label{prop:relaitve:SHfrS1ZarSHfrSNis}
Let $\Corr_{(-)}$ be a family of preadditive $\infty$-categories of correspondences on 
% $\EssSm_{(-)}$, 
$\Sch_{\Schfns}$,
see \Cref{def:correspondencesCorrS}, 
that 
is continuous,
%satisfies properties %(Embed), 
%(FinE), 
see \Cref{def:FinECorr}, 
satisfies 
the closed gluing on affine schemes, satisfies (AHP), see \Cref{def:RLwAHP}, and 
satisfies (\'EtEx) for each field $k$, see \Cref{def:etex}. %,def:EmbedCorr

Then the canonical functor
\begin{equation*}%\label{}
\Htrgp_{\zf}(S)\to \Htrgp_{\nis}(S),
\end{equation*} %^\fr_{\A^1,\nis}
is an equivalence for any neotherian separated scheme $S$ of finite Krull dimension.
% In particular, the equivalence holds for $\zf$-topology.
\end{theorem}
\begin{proof}
By Theorem \ref{th:SHZar=Nis} the claim holds over fields.
Let $S$ be affine scheme of finite Krull dimension.
Let $F\in \Htrgp_{\zf}(\SmAff_S)$. 
The claim is that $F\in \Htrgp_{\nis}(\SmAff_S)$.
% Let $F\in \SH^{S^1,\tr}_{\zar\cup\tau}(\SmAff_S)$, we are going to show that $F\in \SH^{S^1,\tr}_{\nis\cup\tau}(\SmAff_S)$, i.e. it is a Nisnevich sheaf.
By \Cref{th:SHZar=Nis} $F$ goes to the Nisnevich sheaf under the functor
\[\Htrgp_{\zf}(\SmAff_S)\to \prod_{z\in S}\Htrgp_{\zar}(\SmAff_z),\]
By \Cref{lm:Loc(Aff)detectssheaves} applied to the topologies $\zf$ and $\nis$ in view of \Cref{ex:embedZarNistfFinEmbed} the claim follows. 
So the claim holds for
the $\infty$-categories 
$\Htrgp_{\tau}(\SmAff_S)$, 
where $\tau=\zf,\nis$.

By the first point of \Cref{lm:widetildeSzarSConservativity}
the claim for the $\infty$-categories 
$\Htrgp_{\zf}(\SmAff_S)$ 
follows.
The claim for an arbitrary scheme $S$ of finite Krull dimension 
follows by the second point of \Cref{lm:widetildeSzarSConservativity}
applied to a Zariski covering $\coprod_{\beta} S_\beta\to S$ with affine schemes $S_\beta$. 
\end{proof}
\begin{remark}
    It is expected that \Cref{prop:relaitve:SHfrS1ZarSHfrSNis} holds for all qcqs schemes for the appropriate definition of the Zariski topology.
\end{remark}
   
% \end{corollary}

\printbibliography

\end{document}